\newcommand{\NE}{\operatorname{NE}}
\newcommand{\PR}{\operatorname{PR}}
\newcommand{\Fix}{\operatorname{Fix}}
\newcommand{\CR}{\operatorname{CR}}
\newcommand{\lrd}{\operatorname{lrd}}
\newcommand{\inter}{\operatorname{int}}
\newcommand{\ord}{\operatorname{ord}}
\def \RR {{\mathbb R}}
\def \NN {{\mathbb N}}
\def \ve {\varepsilon}
\def \cn {\mathcal{N}}
\def \cc {\mathcal{C}}
\def \cb {\mathcal{B}}
\def \cs {\mathcal{S}}
\newcommand{\dem}{\begin{proof}}
\newcommand{\cqd}{\end{proof}}
\newtheorem{theorem}{Theorem}
\newtheorem{corollary}[theorem]{Corollary}
\newtheorem{Claim}{Claim}
\newtheorem{T}{Theorem}[section]
\newtheorem{Lemma}[T]{Lemma}
\theoremstyle{remark}
\newtheorem{Definition}{Definition}
\numberwithin{equation}{section}
\begin{document}

\title[Explosion of smoothness for conjugacies]{Explosion of smoothness for conjugacies between multimodal maps}
\author{José F. Alves}
\address{José F. Alves \\ Centro de Matemática da Universidade do Porto \\
Rua do Campo Alegre 687, 4169-007 Porto, Portugal}
\email{jfalves@fc.up.pt}
\urladdr{http://www.fc.up.pt/cmup/jfalves}

\author{Vilton Pinheiro}
\address{Vilton Pinheiro \\Departamento de Matem\'atica, Universidade Federal da Bahia\\
Av. Ademar de Barros s/n, 40170-110 Salvador,
 Brazil.}\email{viltonj@ufba.br}

\urladdr{http://www.pgmat.ufba.br}

\author{Alberto A. Pinto}
\address{Alberto A. Pinto \\ LIAAD-INESC Porto LA e Departamento de Matemática, Faculdade de Ci\^encias do Porto
\\Rua do Campo Alegre 687, 4169-007 Porto,
Portugal}\email{aapinto@fc.up.pt}

\urladdr{http://www.ccog.up.pt/index.php/alberto-adrego-pinto}

\date{\today}
\thanks{José F. Alves would like to thank Calouste Gulbenkian Foundation, the European Regional Development Fund through the programme COMPETE and FCT under the projects PEst-C/MAT/UI0144/2011 and PTDC/MAT/099493/2008 for their financial support.
Vilton Pinheiro would like to thank MCT/CNPq 14/2009, CNPq/Jovens Pesquisadores 2008 e  IMCTMAT for their financial support.
Alberto Pinto would like to thank LIAAD-INESC Porto LA, Calouste Gulbenkian Foundation, PRODYN-ESF, POCTI and POSI by FCT and Minist\'erio da Ci\^encia e da Tecnologia, and the FCT
Pluriannual Funding Program of LIAAD-INESC Porto LA for their financial support.
}

\setcounter{tocdepth}{1}

\begin{abstract}
Let $f$ and $g$ be smooth multimodal maps with no periodic attractors  and no  neutral points.
If a  topological conjugacy $h$ between $f$ and $g$ is
$C^{1}$  at a point  in the nearby expanding set of $f$,
then $h$ is a smooth  diffeomorphism
 in the   basin  of attraction of a renormalization interval  of $f$.
 In particular, if $f:I \to I$ and $g:J \to J$ are $C^r$ unimodal maps and $h$ is $C^{1}$ at a boundary of $I$ then $h$ is $C^r$ in $I$.
 \end{abstract}

\maketitle


\section{Introduction}

 There is a well-known theory in hyperbolic dynamics that studies properties of the dynamics and of the topological conjugacies
  that lead to   additional regularity for the conjugacies.
  D. Mostow \cite{Mostow} proved that if  $\mathbb{H}/\Gamma_X$ and  $\mathbb{H}/\Gamma_Y$ are two closed hyperbolic Riemann surfaces covered by finitely generated Fuchsian groups $\Gamma_X$ and $\Gamma_Y$ of finite analytic type, and
$\phi:\overline{\mathbb{H}} \to \overline{\mathbb{H}}$ induces the isomorphism
$i(\gamma)= \phi \circ  \gamma \circ \phi^{-1}$, then $\phi$ is a M\"obius transformation if, and only if, $\phi$  is absolutely continuous.
   M. Shub and D. Sullivan \cite{5SSSShhhub} proved  that for any two analytic orientation
preserving circle expanding endomorphisms $f$ and $g$ of the same degree,  the conjugacy is analytic if, and only if, the conjugacy is absolutely continuous.
 Furthermore, they proved    that if  $f$ and $g$ have the same set of eigenvalues, then the conjugacy is analytic.
R. de la Llave  \cite{12}  and J.M. Marco and R. Moriyon
\cite{14,15}
 proved that   if  Anosov diffeomorphisms have the same set of eigenvalues, then
the conjugacy is smooth. For maps with critical points,
M. Lyubich \cite{LLL1} proved that $C^2$ unimodal maps with Fibonnaci
 combinatorics and   the same eigenvalues are $C^1$ conjugate.
W. de Melo and  M.  Martens \cite{MMMMM1} proved that if topological conjugate unimodal maps, whose attractors are cycles of intervals, have the same set of eigenvalues, then the conjugacy is smooth.    N. Dobbs \cite{Dobbs} proved that if a  multimodal map $f$ has an   absolutely continuous invariant measure, with a positive Lyapunov exponent, and   $f$ is absolutely continuous conjugate to another multimodal map, then the conjugacy is  $C^r$ in the domain of some induced Markov map of $f$.

Here, we  study  the explosion of smoothness for topological conjugacies,
i.e.   the conditions under which the smoothness of the conjugacy in a single point extends to an open set.
  P. Tukia \cite{Tukia} extended the result above of D. Mostow  proving that if  $\mathbb{H}/\Gamma_X$ and  $\mathbb{H}/\Gamma_Y$ are two closed hyperbolic Riemann surfaces covered by finitely generated Fuchsian groups $\Gamma_X$ and $\Gamma_Y$ of finite analytic type, and
$\phi:\overline{\mathbb{H}} \to \overline{\mathbb{H}}$ induces the isomorphism
$i(\gamma)= \phi \circ  \gamma \circ \phi^{-1}$, then $\phi$ is a M\"obius transformation if, and only if,  $\phi$ is differentiable at one radial
limit point with non-zero derivative.
Sullivan  \cite{sullivan} proved that if a topological conjugacy between analytic orientation preserving circle expanding endomorphisms of the same degree is differentiable at a  point with non-zero derivative, then
the conjugacy is analytic.
Extensions  of these results  for Markov maps and   hyperbolic basic sets on surfaces
 were developed by E. Faria  \cite{Edson111},  Y. Jiang \cite{JiangRG,111jia}  and A. Pinto, D. Rand and F. Ferreira \cite{FP,RP}, among others.
 For maps with critical points,
Y. Jiang \cite{4Jiang,5Jiang,6Jiang,11Jiang}  proved that quasi-hyperbolic one-dimensional maps   are smooth conjugated in an open set with full Lebesgue measure
if the conjugacy is differentiable at a  point with  uniform bound.
  In this paper, we  define  the nearby expanding set $\NE(f)$ of a multimodal  map $f$ and  characterize $\NE(f)$ in terms of the
  basins of attraction of renormalization intervals.
 We prove that  if a topological conjugacy between multimodal maps is
$C^{1}$   at a point  in the nearby expanding set $\NE(f)$ of $f$, then the conjugacy is a smooth diffeomorphism in
the basin of attraction of a renormalization interval.

\section{Explosion of smoothness}

Let $I$ be a compact interval and $f:I\to I$ a $C^{1+}$ map. By $C^{1+}$ we mean that $f$ is a differentiable map whose derivative is Hölder.
We say that $c$ is a \emph{non-flat turning point} of $f$, if there
exist $\alpha>1$ and a $C^{r}$ diffeomorphism $\phi$ defined in a
small neighborhood $K$ of $0$ such that
\begin{equation}\label{naodeg}
   f(c+x)=f(c) +\phi(|x|^{\alpha}),\quad\text{for every $x\in K$.}
\end{equation}
We say that $\alpha$ is the \emph{order} of the turning point  $c$
and denote it by $\ord_f(c)$.  We say that $f$ is a
\emph{multimodal} map if the next three conditions hold: \emph{i)} $f(\partial I)\subset \partial I$; \emph{ii)} $f$ has a finite number of turning points points that are all non-flat; and
 \emph{iii)} $\# \Fix(f^{n})<\infty$ for all $n\in\NN$.
A \emph{unimodal} map $f:I \to I$ is a non-flat multimodal map
with a unique turning point $c \in I$.

 The \emph{non-critical backward orbit} ${\mathcal O}^-_{nc} (p)$ of $p$ is the set of all points $q$ such that there is $n=n(q) \ge 0$ with the property that $f^n(q)=p$ and
$(f^n)'(q)\ne 0$. The \emph{non-critical alpha limit set} $\alpha_{nc} (p)$ of $p$ is the set of all accumulation points of  ${\mathcal O}^-_{nc} (p)$.
Let  ${\mathcal O}^-_{nc} (\PR(f))$  be the union  $\cup_{p \in \PR(f)} {\mathcal O}^-_{nc} (p)$ of the non-critical backward orbits ${\mathcal O}^-_{nc} (p)$
 for every repellor periodic points of $p\in \PR(f)$.
 Let $\alpha_{nc}(\PR(f))$ be the union $\cup_{p \in \PR(f)} \alpha_{nc} (p)$ of the non-critical alpha limit sets $\alpha_{nc} (p)$
for all repellor periodic points of $p\in \PR(f)$.

 A set  $A \subset J$ is said to be {\em forward invariant} if $f(A)
\subset A$. The {\em basin} $\cb(A)$ of a forward invariant set $A$ is
the set of all points $x \in A$ such that its omega limit set
$\omega(x)$ is contained in $A$.
An invariant compact set $A
\subset J$ is called a {\em (minimal) attractor}, in Milnor's sense \cite{milnor111,milnor2222}, if the
Lebesgue measure of its basin is positive and there is no forward invariant compact set $A'$ strictly contained in $A$ such that $\cb(A')$ has non zero measure.
The attractors of a $C^r$ non-flat multimodal map are of one of the following three types: i) a periodic attractor; ii) a minimal set with zero Lebesgue measure; or
iii) a cycle of intervals such that the omega limit set of almost
every point in the cycle is the whole cycle (see \cite{SV}).
 According to S. van Strien and E. Vargas \cite{SV}, if
$f:I \to I$ is a $C^r$ non-flat multimodal map, then  there is a
finite set of attractors $A_1,\dots,A_l\subset I$ such that the
union of their basins has full Lebesgue measure in $I$.

An open  interval $J(c)$ containing a critical point $c$ is a \emph{renormalization interval} of a multimodal (resp. unimodal) map $f$, if there is $n=n(J(c)) \ge 1$ such that  $f^n|_{\overline{J(c)}}$ is also a multimodal (resp. unimodal) map.
Hence, the forward orbit of $J(c)$ is a positive invariant set.
A multimodal map $f$ is \emph{no renormalizable inside  a renormalization interval} $J(c)$, if there is no renormalization interval strictly contained in $J$. A multimodal map  $f$ is  \emph{infinitely renormalizable around a critical point} $c$  if there is an infinite sequence of renormalization intervals $J_1(c), J_2(c), \ldots$ such that $J_{n+1}(c)$ is strictly contained in $J_n(c)$ and $c = \cap_{n \ge 1} J_n(c)$.
 The \emph{basin  of attraction} $\cb(J(c))$ \emph{of} $J(c)$ is   the set of points whose forward orbit intersects $J(c)$.

\begin{Definition} [Expanding and nearby expanding points]
A point $p\in I$ is called {\em nearby expanding} if  there are
\begin{enumerate}
\item a sequence of points $p_{n}$ converging to $p$,
\item a sequence of open intervals $V_{n}$ containing $p_{n}$,
\item a sequence of positive integers $k_{n}$ tending to infinity, and
\item $\delta=\delta(p)>0$,
\end{enumerate}
with the following properties:
\begin{enumerate}
\item $f^{k_{n}}|_{V_{n}}$ is a diffeomorphism and
\item  $f^{k_{n}}(V_{n})=B_{\delta}(f^{k_{n}}(p_{n}))$.
\end{enumerate}
Furthermore, a point $p\in I$ is called {\em  expanding} if $p\in I$ is a nearby expanding point with $p_n=p$ for every $n \in \mathbb{N}$.
\end{Definition}

The \emph{nearby expanding set} ${\NE}(f)$ is the set of all nearby expanding points of $f$ and
the \emph{expanding set} ${E}(f)$ is the set of all  expanding points of $f$.

\begin{Lemma} [Fatness of  ${E}(f)$ and ${\NE}(f)$]
\label{Expmap}
Let $f$  be $C^r$ a multimodal map with $r \ge 3$ and no periodic attractors  nor neutral periodic points. Then:
\begin{enumerate}
\item
$E(f)  \supset {\mathcal O}^-_{nc} (\PR(f))$ and $\NE(f)  \supset \alpha_{nc}(\PR(f))$;
\item
if $f$ is  infinitely renormalizable around a critical point $c$,
then there is a renormalization interval  $J(c)$ such that  ${E}(f)$ and ${\NE}(f)$ are  dense in $\cb (J(c))$;
\item
if $f$ is no renormalizable inside a renormalizable interval $J$,  then ${E}(f)$ is  dense in $\cb (J)$ and ${\NE}(f)$ contains $\overline{\cb (J)}$.
\end{enumerate}
\end{Lemma}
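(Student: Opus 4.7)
For a repelling periodic point $p\in\PR(f)$ of period $k$, the hypothesis $|(f^k)'(p)|>1$ together with $C^{1+}$ regularity gives, by a standard expansion estimate, $\delta=\delta(p)>0$ and nested neighborhoods $W_n\ni p$ on which $f^{nk}$ is a diffeomorphism onto $B_{\delta}(p)$; taking $p_n=p$ and $k_n=nk$ places $p$ in $E(f)$. For $q\in{\mathcal O}^-_{nc}(p)$ with $f^N(q)=p$ and $(f^N)'(q)\neq 0$, I would pull $W_n$ back through the local diffeomorphic branch of $f^N$ at $q$ to obtain $V_n\ni q$ with $f^{N+nk}(V_n)=B_\delta(p)=B_\delta(f^{N+nk}(q))$, so $q\in E(f)$. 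Finally, any $y\in\alpha_{nc}(p)$ is a limit of a sequence $q_m\in{\mathcal O}^-_{nc}(p)$; setting $p_n:=q_n$ in the definition, with the witnesses already built at each $q_n$ and the uniform $\delta$ inherited from $p$, shows $y\in\NE(f)$.

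\textbf{Part (3).} Assume $f$ is not renormalizable inside the renormalization interval $J$. The van Strien--Vargas classification, together with the exclusion of attracting and neutral periodic orbits, forces the attractor $A\subset\overline J$ to be a transitive cycle of intervals with $\omega(x)=A$ for Lebesgue a.e.\ $x\in\cb(J)$. Repelling periodic points are then dense in $A$ by the standard topological arguments for transitive non-flat $C^r$ multimodal dynamics. By Part~(1), their noncritical backward orbits lie in $E(f)$; a Koebe distortion argument on non-critical inverse branches of $f$ spreads these preimages densely through every open subset of $\cb(J)$. Hence $E(f)$ is dense in $\cb(J)$, and the accumulation clause in the definition of $\NE(f)$ upgrades this to $\overline{\cb(J)}\subset\NE(f)$.

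\textbf{Part (2).} In the infinitely renormalizable case, let $J_1(c)\supset J_2(c)\supset\cdots$ be the nested renormalization intervals about $c$. The standard construction places each $\partial J_m(c)$ on a periodic orbit of $f$, which is repelling by hypothesis, and these orbits accumulate on the solenoidal attractor and in particular on $c$. I propose to take $J(c):=J_m(c)$ for a sufficiently large $m$ and to produce density of $E(f)$ in $\cb(J_m(c))$ from the union, over all $k\geq m$, of noncritical backward orbits of $\partial J_k(c)$, together with any repelling periodic orbits detected at scale $J_m(c)\setminus \overline{{\mathcal O}(J_{m+1}(c))}$; the inclusion $\overline{\cb(J_m(c))}\subset\NE(f)$ follows by accumulation, as in Part~(3).

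The main obstacle is Part~(2): the solenoidal attractor itself carries no repelling periodic orbit, so every expanding witness must come either from a renormalization boundary or from a non-renormalizable scale between two consecutive $J_m(c)$'s. Verifying density of the resulting backward orbits in $\cb(J_m(c))$ requires combining the topological mixing of the first return map at each renormalization level with a uniform Koebe-type distortion estimate across non-critical inverse branches of $f$, and it is here that the assumptions $r\geq 3$ and non-flatness of the turning points enter in an essential way.
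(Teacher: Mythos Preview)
Your Part~(1) is correct and fills in exactly what the paper leaves implicit.

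For Parts~(2) and~(3) the paper takes a different, more self-contained route. Rather than invoking the van Strien--Vargas attractor classification or density of repelling periodic points in a transitive cycle, the paper (via an auxiliary ``fatness of repellors'' lemma in the appendix) shows directly that for a \emph{single} repelling periodic point $p\in J$ the set $\alpha_{nc}(p)$ already contains all of $\overline{J}$ whenever $f$ is not renormalizable inside $J$. The argument is a gap contradiction: if $\alpha_{nc}(p)$ missed a maximal open interval $K\subset J$ with $\partial K\subset\alpha_{nc}(p)$, then forward invariance of $\alpha_{nc}(p)$ forces every iterate $f^{mk}(K)$ to be an interval with boundary in $\alpha_{nc}(p)$ and interior disjoint from it; the ``forward capture of a critical point'' lemma then produces infinitely many iterates containing a critical point, and two iterates containing the same critical point must coincide, yielding a renormalization interval strictly inside $J$, a contradiction. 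Part~(2) is handled by the same mechanism applied to $S=\bigcup_n \alpha_{nc}(p_n)$ with $p_n\in\partial J_n(c)$ the repelling boundary orbits you already identified; the gap argument shows $S$ is dense near $c$, hence in a deep enough renormalization interval $J(c)$. Parts~(2) and~(3) of the lemma then follow immediately from Part~(1).

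This sidesteps a genuine issue in your Part~(3): non-renormalizability inside $J$ together with the absence of attracting and neutral cycles does \emph{not} force the Milnor attractor to be a cycle of intervals. Wild Cantor attractors exist for non-renormalizable maps of sufficiently high critical order, so your opening inference there is unjustified as stated. The paper's gap argument never looks at the attractor; it needs only one repelling periodic point in $J$, not density of them, and no Koebe estimate. Your Part~(2) worry about the solenoid carrying no periodic orbit is likewise dissolved by the paper's method: the renormalization-boundary orbits are exactly the right witnesses, and density comes from the gap argument rather than from topological mixing or distortion control across levels.
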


If  $f:I \to I$ is a unimodal map, for every renormalization interval $J$,      $\partial  \cb(J)$ is uniformly expanding,
   $\partial I \subset \partial\cb(J)$ and  $\cb(J)$ is an open set with full Lebesgue measure.
Hence, by Lemma \ref{Expmap}, if  $f$ is a unimodal map  whose attractor is a cycle of intervals then ${E}(f)$ is dense in $I$
and ${\NE}(f)=I$.
Furthermore, if  $f$ is a unimodal map that is infinitely renormalizable then ${E}(f)$ and ${\NE}(f)$ are dense in $I$.

 \bigskip
\begin{proof} Let $f$ be  infinitely renormalizable around a critical point $c$.
 By Lemma \ref{Expmap1}, there is a renormalization interval  $J(c)$ such that ${\mathcal O}^-_{nc} (\PR(f))$ is a dense set in $J(c)$.
  Since $E(f)  \supset {\mathcal O}^-_{nc} (\PR(f))$,
  we obtain that $E(f)$ and $\NE(f)$ are dense in $J(c)$.

Let  $f$ be no renormalizable inside a renormalizable interval $J$.
By Lemma \ref{Expmap1}, $\alpha_{nc}(\PR(f))$  contains $J$.
Hence, ${E}(f)$ is  dense in $J$ and ${\NE}(f)$ contains $J$.
\end{proof}

          \begin{Definition} [Puncture set $P(J)$]
   Let $C_P(I)$ be the set of all critical points $c$ whose non-critical alpha limit sets $\alpha_{nc} (c)$ do not intersect the interior of $I$.
   The  \emph{puncture set $P(I)$ of} $I$ is $P(I) = \cup_{c \in C_P(I)} {\mathcal O}^-_{nc} (c)$.
   Let $J$ be a renormalization interval and $n$ the smallest integer such that $F=f^n|J$ is a renormalization of $f$.
   Let $C_P(J)$ be the set of all critical points $c$ whose non-critical alpha limit sets $\alpha_{nc} (c)$ with respect to $F|J$ do not intersect the interior of $J$. The  \emph{puncture set $P(J)$ of} $J$ is $P(J) = \cup_{c \in C_P(J)} {\mathcal O}^-_{nc} (c)$.
    \end{Definition}

  Hence, the puncture set $P$ is either empty or a discrete set. Furthermore, we observe that the puncture set is not located in the central part of the dynamics, i.e. (i) if $f$ is infinitely renormalizable there is a renormalization interval $J(c)$ such that $P \cap J(c) = \emptyset$ and
  (ii) if the Milnor's  attractor $A$ of $f$ is a cycle of intervals then  $P \cap A= \emptyset$, because $\alpha_{nc} (c)$ is dense in $A$ for every critical point $c$ in the interior of $A$.

For every  connected component $G \in D(J)$, let $m=m(G) $ be the  smallest integer such that $f^m(G) \subset J(c)$.
If $m=0$  the puncture set $G_P \subset G$ of $G$ is $G_P= P(J)$, and
if $m > 0$ the puncture set $G_P \subset G$ of $G$ be the union of all  points $x \in {G}$ such that (i) $(f^m)'(x)=0$
or (ii) $(f^m)'(x) \in P(J)$.
 We observe that  $G_P \cap G$  is either a discrete set or empty.
The \emph{punctured basin  of attraction} $\cb_{P}(J(c))$ \emph{of} $J(c)$ is the union $\cup_{G \in D(J)}  G \setminus G_P$.
A renormalization domain $J= \cup_{c \in \CR} J(c)$ of a multimodal  map $f$ is the union of \emph{renormalization intervals} $J(c)$ for a given  subset  $\CR \subset C_f$.
Set    $\cb_P(J)=   \cup_{c \in \CR}  \cb_{P}(J(c))$. We observe that $\overline{\cb_P(J)}= \overline{\cb(J)}$.

 \begin{Definition} [$C^1$ at a point]
We say that a map $h:I \to I'$ is $C^1$ \emph{at a point} $p\in I$, if
$$
\lim_{x,y\to p\atop x\neq y}\frac{h(x)-h(y)}{x-y}=h'(p) \ne 0 .
$$
\end{Definition}

We observe that $h$ is  $C^{1}$  at every point belonging to an interval $K \subset I$ if, and only if, $f$ is a $C^1$ local diffeomorphism in that interval $K$.

We say that a topological conjugacy   $h:I \to L$  between
 $f:I \to I$ and $g:I' \to I'$ \emph{preserves the order of the critical points},
 if    $\ord_f(c)=\ord_g(h(c))$ for every  critical point $c \in C_f$.

\begin{theorem} [Explosion of smoothness]
\label{theoremAAA}
Let $f$ and $g$ be $C^r$ multimodal maps with $r \ge 3$ and no periodic attractors nor neutral periodic points.
Let $h$ be a topological conjugacy  between  $f$ and $g$  preserving the order of the critical points.
If $h$ is $C^{1}$ at a point $p \in \NE(f)$, then either
\begin{enumerate}
\item
$h$  is a $C^{r}$ diffeomorphism in $I\setminus P(I)$; or
\item
there is a unique maximal renormalization domain  $J$
such that $h$ is a $C^{r}$ diffeomorphism in $J \setminus P(J)$.
Furthermore,
\begin{enumerate}
\item $h$ is a $C^{r}$ diffeomorphism
 in  the punctured basin  of attraction $\cb_{P}(J)$;
 \item  $h$ is not  $C^{r}$ at any open interval contained in $I \setminus  \overline{\cb (J)}$;
\item  $h$ is not $C^{1}$ at any point in $E(f) \cap \partial  \cb(J)$.
 \end{enumerate}
  \end{enumerate}
\end{theorem}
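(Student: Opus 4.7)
The plan is to combine three ingredients: (i) a Koebe-type bounded distortion argument to promote the one-point $C^1$ hypothesis to $C^r$ regularity on a small open interval; (ii) a dynamical spreading procedure that uses the conjugacy equation $h\circ f = g\circ h$ to extend smoothness from that interval along non-critical branches; and (iii) a rigidity obstruction that prevents further spreading into the basins of other attractors.

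First I would exploit the hypothesis that $p\in\NE(f)$. Fix the associated sequences $p_n\to p$, $V_n\ni p_n$, $k_n\to\infty$ with $f^{k_n}|_{V_n}$ a diffeomorphism and $f^{k_n}(V_n)=B_\delta(f^{k_n}(p_n))$. Because $f,g$ are $C^r$ with non-flat critical points and without neutral periodic orbits, the standard Koebe principle for $C^{1+}$ multimodal maps applies to these branches, yielding uniformly bounded distortion on a definite subinterval $V_n'\Subset V_n$ with $f^{k_n}(V_n')=B_{\delta/2}(f^{k_n}(p_n))$. The $C^1$ hypothesis at $p$ says that the ratio $(h(x)-h(y))/(x-y)$ tends to $h'(p)\ne 0$ as $x,y\to p$; pushing this through $f^{k_n}|_{V_n'}$ via the conjugacy $g^{k_n}\circ h = h\circ f^{k_n}$ and the bounded distortion of both $f^{k_n}|_{V_n'}$ and $g^{k_n}|_{h(V_n')}$ gives uniform ratio control of $h$ on the fixed-size balls $B_{\delta/2}(f^{k_n}(p_n))$. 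Passing to a subsequence whose centres converge to some $q$, we extract a definite interval $K\ni q$ on which $h$ is bi-Lipschitz and, in fact, $C^1$. A de la Llave / Sullivan-type bootstrap along the expanding dynamics then upgrades $C^1$ to $C^r$ on $K$, using that both $f$ and $g$ are $C^r$ and that the conjugacy relation on an expanding invariant set determines all higher derivatives.

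With smoothness on some interval $K$ in hand, I would spread it dynamically. For any point $x\in I\setminus P(I)$, Lemma 2.1 and the definition of $P(I)$ provide an iterate $f^m$ whose germ at $x$ is a $C^r$ diffeomorphism onto a neighbourhood that meets $K$; the conjugacy relation $h=g^{-m}\circ h\circ f^m$, read on the appropriate monotone branch, then transports the $C^r$ regularity of $h$ from $K$ back to a neighbourhood of $x$. This yields case~(1) whenever the orbit of $K$ under non-critical branches reaches all of $I\setminus P(I)$. Otherwise, the set of points reachable this way is open, forward-invariant up to renormalization, and saturated by non-critical backward orbits; taking the maximal renormalization domain $J$ whose punctured basin $\cb_P(J)$ is exhausted by these branches gives case~(2)(a). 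Uniqueness and maximality of $J$ follow because two such domains with overlapping basins would coalesce, and the inductive renormalization structure provides a largest one.

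The main obstacle is (2)(b)-(c): proving that $h$ fails to be smooth, or even $C^1$, beyond $\overline{\cb(J)}$. For (b), suppose for contradiction that $h$ is $C^r$ on an open interval $K'\subset I\setminus\overline{\cb(J)}$. Then $K'$ lies in the basin of another attractor $A$ (minimal Cantor, or another cycle of intervals). Applying the spreading argument from~$K'$ inside this basin would produce a $C^r$ conjugacy on a neighbourhood of $A$; but the maximality of $J$ together with the rigidity of $C^r$ conjugacies on cycles of intervals or solenoidal attractors (de Melo--Martens and Dobbs-type results cited in the introduction) would force $A$ itself to be part of a renormalization domain that extends $J$, contradicting maximality. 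For (c), a point $p'\in E(f)\cap\partial\cb(J)$ is itself expanding, so the one-point smoothness statement proven above (applied with $p'$ in place of $p$) would propagate $C^r$ regularity of $h$ to a two-sided neighbourhood of $p'$, hence into $I\setminus\overline{\cb(J)}$, contradicting~(b). The genuinely delicate ingredient, and the step I expect to dominate the technical work, is the distortion-plus-bootstrap passage in the first paragraph, since it is here that the non-flat critical structure, the absence of neutral cycles, and the order-preserving hypothesis on~$h$ all enter simultaneously to upgrade a single point of $C^1$ regularity into a full interval of $C^r$ smoothness.
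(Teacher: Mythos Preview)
Your three-step outline matches the paper's architecture closely: step~(i) is the paper's ``zooming pair'' machinery (Lemmas~3.2--3.4 and~4.1--4.2), which indeed uses Koebe distortion on the branches supplied by $p\in\NE(f)$ to pass from the one-point $C^1$ condition through quasi-symmetry and $C^{1+\alpha}$ to $C^r$ on a definite interval; step~(ii) is the spreading of Lemma~5.1; and your argument for~(2)(c) is exactly the paper's.

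Where you diverge is in the structure of the spreading step and in~(2)(b). In~(ii) you describe the maximal reachable set somewhat informally; the paper works harder here, showing first that the maximal open set on which $h$ is $C^s$ and which contains a critical point is a \emph{nice} set (Lemma~5.2), that its recurrent components form a renormalization domain $J^s_R$ (Lemma~5.4), and then refining to a $C^r$ renormalization interval $K(c')$ with its puncture set (Lemma~6.1). This nice-set analysis is what actually pins the smooth locus down as the punctured basin of a renormalization domain; your phrase ``the set of points reachable this way is open, forward-invariant up to renormalization'' hides the non-trivial content.

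For~(2)(b), your detour through external rigidity results (de~Melo--Martens, Dobbs) is both unnecessary and does not obviously deliver what you claim: those results relate smooth conjugacy to equality of multipliers, they do not manufacture renormalization domains. The paper's argument is much simpler and self-contained. The domain $J$ is \emph{defined} as the union $\bigcup_{c\in C_f^r} K(c)$ over \emph{all} critical points $c$ for which the spreading procedure (Lemma~6.1) produces a renormalization interval $K(c)$ on which $h$ is $C^r$ off a puncture set. Hence if $h$ were $C^r$ on some $K'\subset I\setminus\overline{\cb(J)}$, applying Lemma~6.1 to $K'$ would produce a $K(c'')$ with $K'\cap\cb(K(c''))\ne\emptyset$; but $K(c'')\subset J$ by construction, so $K'$ meets $\cb(J)$, a contradiction. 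In short: drop the appeal to rigidity and let the maximality-by-construction of $J$ do the work.
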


  We observe that Theorem \ref{theoremAAA} still holds if we replace the  hypotheses of $h$ being  $C^{1}$ at a point $p \in E(f)$
  by  $h$ being  $C^{r}$  in an open set.
N. Dobbs \cite{Dobbs} proved that if (i) a multimodal map $f$ has an   absolutely continuous invariant measure with a positive Lyapunov exponent and (ii) the conjugacy $h$ between $f$ and another multimodal map $g$ is absolutely continuous, then $h$ is  $C^r$ in an open set. Hence, Theorem \ref{theoremAAA} applies to this case.

  The proof of Theorem \ref{theoremAAA} is given at the end of Section \ref{RRIIII3}.

\begin{corollary} [Full measure explosion of smoothness for unimodal maps]
\label{corollaryeee}
Let $f$ and $g$ be $C^r$ unimodal maps with $r \ge 3$ and no periodic attractors  nor  neutral periodic points.
Let~$h$ be a topological conjugacy  between  $f$ and $g$  preserving the order of the critical points.
If $h$ is $C^{1}$ at a point $p \in \NE(f)$, then either
\begin{enumerate}
\item
$h$ is a $C^{r}$ diffeomorphism  in the full interval $I$; or
\item
there is a unique maximal renormalization interval  $J \subseteq I$
such that
\begin{enumerate}
\item $h$ is a $C^{r}$ diffeomorphism
 in the     basin  $\cb(J)$,    and
\item  $h$ is not $C^{1}$ at any point in $\partial \cb(J)$.
 \end{enumerate}
  \end{enumerate}
\end{corollary}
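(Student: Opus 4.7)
The proof is a specialization of Theorem \ref{theoremAAA} to unimodal maps. A unimodal map has a single critical point, so every renormalization domain appearing in Theorem \ref{theoremAAA} reduces to a single renormalization interval around that point, which already accounts for the uniqueness of $J$ in case (2). The remaining tasks are (a) to show the puncture sets $P(I)$ and $P(J)$ are empty, so that $\cb_P(J)=\cb(J)$ and $I\setminus P(I)=I$, and (b) to promote the failure-of-$C^1$ statement from $E(f)\cap\partial\cb(J)$ to the entire boundary $\partial\cb(J)$.

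For (a), the standing hypotheses of no periodic attractors and no neutral periodic points leave only two possibilities for the Milnor attractor of $f$: a cycle of intervals, or a Cantor attractor arising from infinite renormalization. The observations following the definition of $P$ tell us that $P\cap A=\emptyset$ whenever $A$ is a cycle of intervals (because $\alpha_{nc}(c)$ is dense in $A$), and that in the infinitely renormalizable unimodal case there is a renormalization interval $J(c)$ with $P\cap J(c)=\emptyset$. In both situations the non-critical preimages of $c$ accumulate inside the relevant interval, so $C_P=\emptyset$ and hence $P=\emptyset$. Thus case (1) of Theorem \ref{theoremAAA} gives $h\in C^r(I)$ directly, while in case (2) we obtain $h\in C^r(\cb(J))$.

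For (b), I would appeal to the paragraph immediately after Lemma \ref{Expmap}, which records that, for any renormalization interval $J$ of a unimodal map, the boundary $\partial\cb(J)$ is uniformly expanding, contains $\partial I$, and $\cb(J)$ is open of full Lebesgue measure. Uniform expansion along $\partial\cb(J)$ implies that every point $p\in\partial\cb(J)$ admits sequences $p_n=p$, open intervals $V_n\ni p$ and integers $k_n\to\infty$ realising $f^{k_n}(V_n)=B_{\delta}(f^{k_n}(p))$ for a uniform $\delta>0$, so $\partial\cb(J)\subseteq E(f)$. Then conclusion (2.c) of Theorem \ref{theoremAAA} upgrades to the statement that $h$ fails to be $C^1$ at every point of $\partial\cb(J)$. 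The auxiliary clause (2.b) of Theorem \ref{theoremAAA}, about $h$ not being $C^r$ on open intervals in $I\setminus\overline{\cb(J)}$, is vacuous here, since $\cb(J)$ is open of full measure in the connected interval $I$ forces $\overline{\cb(J)}=I$.

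The main obstacle is step (a) in the infinitely renormalizable case: Theorem \ref{theoremAAA} hands us a specific \emph{maximal} renormalization interval $J$, whereas the observation in the paper only guarantees that $P\cap J(c)=\emptyset$ for \emph{some} renormalization interval $J(c)$, typically a deep one. To close this gap one must show that, once $h$ is $C^r$ on the punctured basin of a deep $J(c)\subset J$, the conjugacy relation $h\circ f^n = g^n\circ h$ transports the smoothness to all of $\cb(J)$ and eliminates the puncture there, using the absence of wandering intervals for $C^3$ unimodal maps together with the fact that every component of $D(J)$ is eventually mapped inside the deep interval by a diffeomorphism.
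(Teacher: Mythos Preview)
The paper gives no explicit proof of this corollary; it is stated immediately after Theorem~\ref{theoremAAA} and is meant to follow from that theorem together with the observations recorded in the surrounding paragraphs (that for a unimodal map $\partial\cb(J)$ is uniformly expanding, $\partial I\subset\partial\cb(J)$, and $\cb(J)$ has full measure). Your reduction is therefore exactly the intended one, and steps (b) and the full-measure remark are correct as written.

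Your self-identified ``main obstacle'' in step~(a) is not a real gap, and the workaround you sketch (transporting smoothness up from a deep renormalization level) is unnecessary. For a unimodal map the puncture set is empty \emph{directly}, at every renormalization level, for the following reason. If $J$ is any renormalization interval and $F=f^n|_{\overline J}$ the renormalized unimodal map, then $F$ has the single critical point $c$ and no periodic attractors. By Lemma~\ref{zoom3} applied to $F$, every open subinterval of $J$ is eventually mapped over $c$; taking the first such time gives a diffeomorphic branch, hence a non-critical $F$-preimage of $c$ in that subinterval. Thus $\mathcal O^-_{nc}(c)$ is dense in $J$, so $\alpha_{nc}(c)\supset\inter(J)$, whence $C_P(J)=\emptyset$ and $P(J)=\emptyset$. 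The same argument with $J=I$ gives $P(I)=\emptyset$. Moreover, for any gap $G\in D(J)$ the first entry map $f^{m}|_G$ into $J$ is already a diffeomorphism, because the unique critical point $c$ lies in $J$ and $f^j(G)\cap J=\emptyset$ for $j<m$; hence $G_P=\emptyset$ as well, and $\cb_P(J)=\cb(J)$ without further work. This closes the argument cleanly and removes the need for the last paragraph of your proposal.
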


  We observe that if  $f:I \to I$ is a unimodal map,  then  (i) $\partial  \cb(J)$ is uniformly expanding,
 (ii)  $\partial I \subset \partial\cb(J)$, and  (iii) $\cb(J)$ is an open set with full Lebesgue measure in $I$.
 By  Corollary  \ref{corollaryeee},  the map  $h$ is $C^{1}$ at a point $p \in  \partial I$
if, and only if, $h$ is  a $C^{r}$ diffeomorphism in $I$.

\section{Zooming pairs}

We will prove that, in Theorem \ref{theoremA} and in its two corollaries,
the hypothesis $h$ is $C^{1}$ at a point $p$
can be weakened to $h$ being (uaa) uniformly asymptotically affine
at $p$.
We will define the zooming pairs that we will use to show  if  $h$ is uaa at a point
then  $h$ and $h^{-1}$ are $C^{r}$   in small open sets.

 Let $h:I \to I'$ be a homeomorphism. For every  (x,y,z) of points $x,y,z \in I$,
 such that $x <y<z$, we define
the logarithmic ratio distortion $\lrd_h(x,y,z)$ by
$$
\lrd_h(x,y,z) =
 \left| \log
\frac{\left|h(z)-h(y)\right|}
{\left|h(y)-h(x)\right|}
\frac{\left|y-x\right|}
{\left|z-y\right|}
\right| \ .
$$

\begin{Definition} [uaa]
\label{desp1}
Let $h:I \to  I'$ be a homeomorphism. The map $h$ is  \emph{
uniformly asymptotically affine (uaa)}   at a point $p$  if,
for every $C \ge 1$,
there is a continuous function $\epsilon_C:\RR^+_0  \to
\RR^+_0$,  with $\epsilon_C (0)=0$, such that
\begin{equation}
\label{fgggrrs31313131}
\lrd_h(x,y,z)
\le \epsilon_C (|x-p|) \ ,
\end{equation}
for all $x < y< z$ with $C^{-1} < |z-y|/|y-x|<C$.
\end{Definition}

\begin{Lemma} [$C^{1}$ implies uaa]
\label{esp1}
Let $h:I \to I'$ be a homeomorphism.
If $h$ is $C^{1}$ at a point $p \in I$, then $h$ is uaa  at $p$.
\end{Lemma}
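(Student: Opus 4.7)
The strategy is to convert the pointwise $C^{1}$ hypothesis into a uniform first-order linear approximation valid throughout a neighborhood of $p$, substitute it into the logarithmic ratio distortion, and check that what results is a small quantity controlled by $|x-p|$ and $C$.

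First I would set $a := h'(p) \neq 0$ and rewrite the $C^{1}$-at-$p$ hypothesis as the existence of a continuous modulus $\omega : \RR^+_0 \to \RR^+_0$, with $\omega(0)=0$, such that for every $s > 0$ and every $u \neq v$ in $[p-s, p+s] \cap I$,
\[
h(v)-h(u) = (v-u)\bigl(a + \eta(u,v)\bigr), \qquad |\eta(u,v)| \le \omega(s).
\]
This is just a reformulation of the definition of $C^{1}$ at $p$, obtained by passing, if necessary, to a continuous upper envelope of the non-decreasing function $s \mapsto \sup\{|\tfrac{h(v)-h(u)}{v-u} - a| : u \ne v,\ u,v \in [p-s, p+s] \cap I\}$.

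Second, I would substitute this expansion directly into $\lrd_h$. For any triple $x<y<z$ contained in $[p-s, p+s]$,
\[
\frac{|h(z)-h(y)|}{|h(y)-h(x)|} \cdot \frac{|y-x|}{|z-y|} = \left|\frac{a + \eta(y,z)}{a + \eta(x,y)}\right|.
\]
Once $s$ is small enough that $\omega(s) \le |a|/2$, both numerator and denominator on the right-hand side lie in the compact interval $[|a|/2,\, 3|a|/2]$, on which $\log$ is Lipschitz with some constant $L = L(|a|)$. Hence
\[
\lrd_h(x,y,z) \le L\bigl(|\eta(x,y)| + |\eta(y,z)|\bigr) \le 2L\,\omega(s).
\]
Notice that this bound is \emph{independent} of $C$; the ratio condition has not yet played any role.

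Third, I would package this into a function $\epsilon_C$ satisfying the definition of uaa. The ratio condition $C^{-1} < |z-y|/|y-x| < C$ yields $|z-x| \le (1+C)|y-x|$, so any triple $x<y<z$ with $|x-p|, |z-p| \le s$ automatically lies in $[p-s, p+s]$. In the regime where the triple shrinks to $p$, one may therefore take
\[
\epsilon_C(s) := 2L\,\omega\bigl((2+C)s\bigr),
\]
which is continuous with $\epsilon_C(0)=0$. For triples that escape a fixed neighborhood of $p$, the quantity $\lrd_h(x,y,z)$ is uniformly bounded in terms of $C$ because $h$ is a homeomorphism of the compact interval $I$ and the ratio of consecutive gaps is bounded; a continuous tail may be added to $\epsilon_C$ away from $0$ so as to absorb this regime.

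The chief obstacle is not the central computation, which is a one-line first-order expansion of $\log$, but the bookkeeping that packages the \emph{local} pointwise hypothesis into a single continuous modulus depending only on $|x-p|$ and $C$: the ratio condition controls the shape of a triple but not its size, so the argument must be localised to triples approaching $p$ and then extended continuously to all admissible triples. Once this packaging is performed, the lemma follows immediately from the estimate above.
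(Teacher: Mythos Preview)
Your approach is essentially the same as the paper's: both convert the $C^{1}$-at-$p$ hypothesis into the statement that difference quotients $\tfrac{h(v)-h(u)}{v-u}$ are uniformly close to $h'(p)$ on small neighborhoods of $p$, and then observe that the two copies of $h'(p)$ cancel in $\lrd_h(x,y,z)$, leaving an error that goes to zero with the neighborhood. The paper's proof is terser (it passes directly through $\bigl|\log\tfrac{|h(y)-h(x)|}{|y-x|\,|h'(p)|}\bigr|\le O(1/m)$ for $x,y\in B_{\theta_m}(p)$ and subtracts), while you carry out the same cancellation via the linear-approximation form $h(v)-h(u)=(v-u)(a+\eta)$ and a Lipschitz bound on $\log$; you also discuss the packaging for triples not entirely near $p$, which the paper simply omits.
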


\begin{proof}
If $h$ is $C^{1}$ at $p$, then
 there is a sequence $\theta_m$ converging to $0$,
when $m$ tends to $\infty$, such that
\begin{equation}
\label{gfsge5}
\left| \log
\frac{\left|h(y)-h(x)\right|}
 {\left|y-x\right|}
h'(p)
\right|
\le O \left(\frac{1}{m}\right) \ ,
\end{equation}
for all $x,y  \in B_{\theta_m}(p)$. Hence, for all $x,y,z  \in
B_{\theta_m}(p)$, we obtain
\begin{equation}
\label{gfsge7}
\left| \log
\frac{\left|h(z)-h(y)\right|}
{\left|h(y)-h(x)\right|}
\frac{\left|y-x\right|}
{\left|z-y\right|}
\right|
\le O \left(\frac{1}{m}\right) \ ,
\end{equation}
and so, $h$ is uaa at $p$.
\end{proof}

\begin{Definition} [$\alpha$-bounded distortion]
We say that a $C^r$ multimodal map $f$ has  $\alpha$-\emph{bounded distortion}
with respect to a sequence $V_1,V_2,\ldots$ of  intervals and  a sequence of integers
 $k_n$ tending to $\infty$, if  there is $C \ge 1$ such that
\begin{equation}
\label{gfsge1}
\lrd_{f^{k_{n}}}(x,y,z)
\le C |f^{k_{n}}(z)-f^{k_{n}}(x)|^\alpha  \ ,
\end{equation}
 for  all $x,y,z \in V_{n}$, with  $x < y < z$, and all $n \ge 1$.
 \end{Definition}

 \begin{Definition} [Zooming pair $(p,V)$]
 \label{desp5}
Let $f:I\to I$ and $g:I'\to I'$ be $C^{r}$ maps, with $r\ge2$, and $h:I\to I'$ a topological conjugacy between $f$ and $g$. An $\alpha$-\emph{zooming pair} $(p,V)$ consists of a point $p\in I$ and an open interval $V\subset I$ such that
 \begin{enumerate}
 \item
  there is a sequence $V_1,V_2,\ldots$ of  intervals in $I$ and
   \item a sequence of integers $k_n$ tending to $\infty$,
   \end{enumerate}
   with the following  properties:
 \begin{enumerate}
 \item
  $\sup_{x \in V_n} |x-p| \to 0$ when $n \to \infty$;
  \item
  $f^{k_{n}}|_{V_n}$ and $g^{k_{n}}|_{h(V_n)}$ are diffeomorphisms onto the intervals $V$ and $h(V)$ respectively;
  \item $f$ has $\alpha$-bounded distortion
  with respect to the sequences $V_1,V_2,\ldots$ and $k_1,k_2,\ldots$;
  \item $g$ has $\alpha$-bounded distortion
  with respect to the  sequences $h(V_1)$, $h(V_2),\ldots$ and $k_1,k_2,\ldots$.
  \end{enumerate}
  A \emph{central zooming pair} $(p,V)$ is a zooming pair $(p,V)$ with the property that $p \in V_n$ for some $n \in \mathbb{N}$.
 \end{Definition}

\begin{Lemma}[Explosion of smoothness from $p$ to $V$]
\label{esp3}
Let $f$ and $g$ be $C^r$ maps, with $r \ge 3$, topologically conjugated by a homeomorphism $h$. Assume that $(p,V)$ is an $\alpha$-zooming pair for some $0 < \alpha <1$.
If   $h$ is uaa at $p$,
then  $h|V$ is a $C^{1 + \alpha}$ diffeomorphism onto its image.
Furthermore, if $(p,V)$ is a central zooming pair then $h|V_0$ is a $C^{1+\alpha}$ diffeomorphism onto its image, for some open interval $V_0$ containing $p$.
\end{Lemma}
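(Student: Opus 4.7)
\textbf{Proof plan for Lemma \ref{esp3}.} The plan is to turn the pointwise uaa hypothesis at $p$ into a uniform H\"older bound on the logarithmic ratio distortion of $h$ on all of $V$, then use a standard telescoping argument to upgrade this to $C^{1+\alpha}$ regularity. The central tool is the pull-back along the $\alpha$-zooming pair: every triple $x<y<z$ in the large, fixed interval $V$ can be pulled back by $f^{k_n}|_{V_n}$ to a triple $x_n<y_n<z_n$ in $V_n$, which by condition (1) of Definition \ref{desp5} accumulates on $p$. Since $h$ is a conjugacy, the pull-back of $(h(x),h(y),h(z))$ via $g^{k_n}|_{h(V_n)}$ is exactly $(h(x_n),h(y_n),h(z_n))$.

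First I would compute $\lrd_h(x,y,z)$ in terms of $\lrd_h(x_n,y_n,z_n)$. Writing the logarithmic ratio as a sum of two logs and inserting the pull-backs, one gets
\begin{equation*}
\log\frac{|h(z)-h(y)|}{|h(y)-h(x)|}=\log\frac{|h(z_n)-h(y_n)|}{|h(y_n)-h(x_n)|}+E_g(n),
\qquad
\log\frac{|y-x|}{|z-y|}=\log\frac{|y_n-x_n|}{|z_n-y_n|}+E_f(n),
\end{equation*}
where $|E_g(n)|\le C|h(z)-h(x)|^\alpha$ and $|E_f(n)|\le C|z-x|^\alpha$ by the $\alpha$-bounded distortion of $g^{k_n}|_{h(V_n)}$ and $f^{k_n}|_{V_n}$ respectively. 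The same $\alpha$-bounded distortion controls the ratio $|z_n-y_n|/|y_n-x_n|$ by $|z-y|/|y-x|$ up to a factor $e^{C|z-x|^\alpha}$, so if $|z-y|/|y-x|\in[C_0^{-1},C_0]$ then the pulled-back ratio lies in a bounded interval depending only on $C_0$ and $|V|$. Hence the uaa assumption at $p$ (applied with this enlarged constant) gives $\lrd_h(x_n,y_n,z_n)\le\epsilon(|x_n-p|)\to 0$ as $n\to\infty$. Taking $n\to\infty$ in the decomposition yields
\begin{equation*}
\lrd_h(x,y,z)\le C_1\bigl(|z-x|^\alpha+|h(z)-h(x)|^\alpha\bigr)
\end{equation*}
for every triple $x<y<z$ in $V$ with $|z-y|/|y-x|\in[C_0^{-1},C_0]$.

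Next I would bootstrap this to the desired smoothness. The bound above gives uniformly bounded cross-ratio distortion of $h$ on $V$, so $h|V$ is quasisymmetric and therefore H\"older continuous, say $|h(z)-h(x)|\le C_2|z-x|^\beta$ for some $\beta\in(0,1]$; substituting back converts the bound into a pure H\"older estimate $\lrd_h(x,y,z)\le C_3|z-x|^{\alpha\beta}$ on balanced triples. A standard telescoping argument along dyadic subdivisions then shows that $\rho(x,y):=(h(y)-h(x))/(y-x)$ is Cauchy as $y-x\to 0$ with H\"older rate; the limit defines $h'$, and the resulting derivative is H\"older. A careful bookkeeping (first improving $\beta$ to $1$ using that $h'$ is bounded away from $0$ and $\infty$, then re-running the estimate) recovers the full exponent $\alpha$ and gives that $h|V$ is a $C^{1+\alpha}$ diffeomorphism onto $h(V)$.

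Finally, for a central zooming pair, $p\in V_{n_0}$ for some $n_0$ and the conjugacy identity $h\circ f^{k_{n_0}}=g^{k_{n_0}}\circ h$ on $V_{n_0}$ yields
\begin{equation*}
h|_{V_{n_0}}=\bigl(g^{k_{n_0}}|_{h(V_{n_0})}\bigr)^{-1}\circ h|_V\circ f^{k_{n_0}}|_{V_{n_0}},
\end{equation*}
a composition of two $C^r$ diffeomorphisms with the $C^{1+\alpha}$ map $h|V$, so $h$ is $C^{1+\alpha}$ on the open interval $V_0:=V_{n_0}\ni p$. The main technical obstacle is the bootstrapping step: converting the H\"older ratio-distortion bound to $C^{1+\alpha}$ regularity with the optimal exponent requires the standard, but delicate, telescoping along nested subdivisions together with the a priori quasisymmetry of $h$ used to control $|h(z)-h(x)|$ in terms of $|z-x|$; everything else is a direct bookkeeping of the bounded distortion estimates.
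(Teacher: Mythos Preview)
Your proposal is correct and follows essentially the same route as the paper's proof: pull back triples in $V$ to $V_n$, use the $\alpha$-bounded distortion of $f^{k_n}$ and $g^{k_n}$ plus the uaa hypothesis at $p$ to obtain $\lrd_h(x,y,z)\le O(|z-x|^\alpha+|h(z)-h(x)|^\alpha)$, then bootstrap via quasisymmetry/H\"older continuity to reach the full $C^{1+\alpha}$ regularity, and finally transport the smoothness back to a neighborhood of $p$ by the conjugacy relation in the central case. The only cosmetic difference is that the paper invokes \cite{SP} for the passage from the H\"older $\lrd$ bound to $C^{1+\alpha}$, whereas you sketch the equivalent dyadic telescoping argument directly; your explicit verification that the pulled-back ratio $|z_n-y_n|/|y_n-x_n|$ stays bounded (so that the uaa hypothesis applies with a fixed constant) is a detail the paper glosses over.
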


\begin{proof}
Given $a,b,c \in V$, with $a<b<c$,
let $a_n,b_n,c_n \in V_n$ be such that $f^{k_n}(a_n)=a$,  $f^{k_n}(b_n)=b$
and  $f^{k_n}(c_n)=c$. Since $f$ has
$\alpha$-uniformly bounded distortion,
\begin{equation}
\label{bgfbvgfr1}
\lrd_{f^{k_{n}}}(a_n,b_n,c_n)
 \le
O(|c-a|^\alpha) .
\end{equation}
Since $g$ has  has
uniformly bounded distortion,
  we get
 \begin{equation}
\label{bgfbvgfr3}
\lrd_{g^{k_{n}}}(h(a_n),h(b_n),h(c_n)) \le
O(|h(c)-h(a)|^\alpha) \ .
 \end{equation}
 By the definition of zooming, there is  a sequence $\sigma_n \to 0$   such that,
 for all $x \in V_n$,
 \begin{equation}
\label{aascfjefjio3}
|x -p| <  \sigma_n \ .
 \end{equation}
Since  $f$ is  (uaa) at $p$,
by \eqref{fgggrrs31313131}, we have
$$
\lrd_{h}(a_n,b_n,c_n)
\le  \epsilon_C( \sigma_n) \ .$$
Hence,  by \eqref{aascfjefjio3}, there is $n$ large enough such that
\begin{equation}
\label{bgfbvgfr5}
\lrd_{h}(a_n,b_n,c_n)
\le  |c-a| \ .
\end{equation}
Combining  \eqref{bgfbvgfr1}, \eqref{bgfbvgfr3} and \eqref{bgfbvgfr5}, we have
\begin{eqnarray}
\label{vvvvvfffff3}
\lrd_{h}(a,b,c)
&\le&
\lrd_{g^{k_{n}}}(h(a_n),h(b_n),h(c_n)) +
\lrd_{h}(a_n,b_n,c_n)
 +
\lrd_{f^{k_{n}}}(a_n,b_n,c_n) \nonumber \\& \le &
O(|c-a|^\alpha+ |h(c)-h(a)|^\alpha) \ .
\end{eqnarray}
Therefore, the homeomorphism $h$ is quasi-symmetric in $V$. Hence,
 there is $\gamma > 0$, such that $h|V$ is $\gamma$-H\"older continuous.
Thus, we obtain that \eqref{vvvvvfffff3} is bounded by
$C_1|c-a|^{\alpha\gamma}$, for some $C_1 >1$.
Hence, by
\cite{SP}, we get that $h|V$  and $h^{-1}|h(V)$ are $C^{1+\alpha\gamma}$ maps.
Therefore, $|h(c) - h(a)| \le O(|c-a|)$ and, so,
\eqref{vvvvvfffff3} is also bounded by
$C_2|c-a|^{\alpha}$, for some $C_2 >1$.
Hence, again by
\cite{SP}, we get that $h|V$  and $h^{-1}|h(V)$ are $C^{1+\alpha}$ maps.

Furthermore, if $(p,V)$ is a central zooming pair then there is an open interval $V_0$ containing $p$ and an integer $n$ such that
$f^n|V_0$ is a $C^r$ diffeomorphism and $f^n(V_0) \subset V$. Hence $h|V_0= (g^n|h(V_0))^{-1} \circ h \circ f^n$ is a $C^r$ diffeomorphism.
\end{proof}

\begin{Lemma}[Building up smoothness from $C^{1+\alpha}$ to $C^r$]
\label{esp4}
Let $f$ and $g$ be $C^r$ maps, with $r \ge 3$, topologically conjugated by a homeomorphism $h$.
If $h|V$ is a $C^{1+\alpha}$ diffeomorphism in some open set $V$, then
$h|W$  is a $C^r$ diffeomorphism for some open set $W \subset V$.
\end{Lemma}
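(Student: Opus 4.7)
The plan is to locate a hyperbolic repelling periodic point of $f$ inside $V$, apply a $C^{r}$ one-dimensional Sternberg--Koenigs linearization at that point and at its image under $h$, and then exploit the resulting functional equation together with a self-similarity argument to force $h$ to be linear (hence $C^{r}$) in linearizing coordinates. First, I would produce $p_0\in V$ with $f^n(p_0)=p_0$ for some $n\ge 1$ and $|(f^n)'(p_0)|>1$. In the context of the paper such a point is available: $V$ arises as $V=f^{k_m}(V_m)$ from a zooming pair $(p,V)$, and for $m$ large a fixed point of $f^{k_m}|_{V_m}$ sits inside $V\cap V_m$ thanks to the expansion of $f^{k_m}|_{V_m}$ onto $V$; alternatively, one may invoke density of hyperbolic repelling periodic orbits in the non-wandering part of a $C^{r}$ multimodal map with no periodic attractors nor neutral periodic points. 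Setting $q_0:=h(p_0)$, the conjugacy forces $g^n(q_0)=q_0$ with multiplier $\lambda_g:=(g^n)'(q_0)$, and $q_0$ is not a critical point of $g^n$ because $h|_{V}$ is a $C^{1+\alpha}$ diffeomorphism.

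Since $f,g$ are $C^{r}$ and hyperbolic fixed points in dimension one admit no non-trivial resonances, the Sternberg--Koenigs theorem yields $C^{r}$ diffeomorphisms $\phi,\psi$ on small neighbourhoods of $p_0,q_0$, with $\phi(p_0)=\psi(q_0)=0$, such that $\phi\circ f^n\circ\phi^{-1}(x)=\lambda_f x$ and $\psi\circ g^n\circ\psi^{-1}(x)=\lambda_g x$, where $\lambda_f:=(f^n)'(p_0)$. After shrinking so that $\phi^{-1}$ maps a neighbourhood of $0$ into $V$, the composite $H:=\psi\circ h\circ\phi^{-1}$ is a $C^{1+\alpha}$ diffeomorphism of a neighbourhood of $0$ in $\RR$ with $H(0)=0$, $H'(0)\ne 0$, and the conjugacy equation transforms into
\[
H(\lambda_f x)=\lambda_g H(x).
\]
Differentiating at $x=0$ yields $\lambda_f H'(0)=\lambda_g H'(0)$, so $\lambda_f=\lambda_g=:\lambda$. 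Iterating backwards, $H(\lambda^{-m}x)=\lambda^{-m}H(x)$ for every $m\ge 1$ and every sufficiently small $x$; dividing by $\lambda^{-m}x$ and sending $m\to\infty$, using that $H$ is differentiable at $0$,
\[
\frac{H(x)}{x}=\lim_{m\to\infty}\frac{H(\lambda^{-m}x)}{\lambda^{-m}x}=H'(0),
\]
so $H(x)=H'(0)\,x$ is linear on a neighbourhood of $0$. Pulling back through $\phi$, the identity $h=\psi^{-1}\circ H\circ\phi$ exhibits $h$ as a composition of $C^{r}$ maps on an open set $W\subset V$ containing $p_0$, as required.

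The main obstacle is the first step, securing a hyperbolic repelling periodic orbit inside $V$. Once this is in hand the argument is routine: the $C^{r}$ linearization is available unconditionally in dimension one, and the self-similarity step upgrading $C^{1+\alpha}$ regularity to exact linearity in linearizing coordinates is a standard consequence of differentiability at the linearized fixed point, after which the $C^{r}$ regularity of $h$ on $W$ follows by composing $C^{r}$ charts.
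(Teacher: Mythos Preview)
Your argument is correct and takes a different route from the paper. Both proofs begin by locating a repelling periodic point accessible from $V$; the paper does this via Lemma~\ref{Expmap1}, landing in $\inter(f^m(V))$ rather than in $V$ itself, and your fixed-point argument on $f^{k_m}|_{V_m}$ has the same looseness (it needs $V_m\subset V$, which the zooming definition does not give), but in either case a pull-back through a diffeomorphic branch of $f^m$ recovers $W\subset V$. From that point the paper proceeds by a \emph{renormalization} argument: nested intervals $W_n\downarrow\{p\}$ are affinely rescaled to $(0,1)$, the rescaled iterates $f_n=i_0\circ f^{nl}\circ i_n^{-1}$ and $g_n$ are shown (via Lemmas~E13 and~E15 of \cite{RP}) to subconverge in $C^{r-\epsilon}$ to $C^r$ diffeomorphisms $\underline{f},\underline{g}$, while the $C^{1+\alpha}$ regularity of $h$ forces the rescaled conjugacies $h_n$ to converge to an affine map $\underline{h}$; the identity $h|_{W_0}=j_0^{-1}\circ\underline{g}\circ\underline{h}\circ\underline{f}^{-1}\circ i_0$ then exhibits $h$ as $C^r$. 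You instead package the convergence step into the classical Sternberg--Koenigs $C^r$ linearization and reduce to the functional equation $H(\lambda x)=\lambda H(x)$, whose only solutions differentiable at $0$ are linear. Your route is shorter and more conceptual, at the price of importing Sternberg as a black box; the paper's is more hands-on and keeps the analytic machinery internal to \cite{RP}. One small gap in your write-up: before applying Sternberg to $g^n$ at $q_0$ you need $|\lambda_g|\neq 0,1$, not merely $\lambda_g\neq 0$; but this follows at once from the chain rule on $g^n\circ h=h\circ f^n$ at $p_0$ (using that $h$ is $C^1$ there with $h'(p_0)\neq 0$), which already gives $\lambda_g=\lambda_f$ prior to any linearization.
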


\dem
By Lemma \ref{Expmap1},  there is a reppelor $p \in I$ and  integers  $m$ and $l$  such that $p \in \inter(f^m(V))$ and $f^l(p)=p$.
Since $p$ is a reppelor there is an open interval $W \subset \inter(f^n(V))$ with $p \in W$ such that
$|f^{lj}(x)| > \lambda > 1$, for all $x \in W$.
Let $W_0, W_1, \ldots $ be a sequence of open intervals contained in $W$ such that (i) $f^l(W_{n+1})=W_{n}$,  (ii) $W_{n+1}  \subset W_n$,
and (iii) $|W_n| \to 0$ for every $n \ge 0$.
Let $i_n:W_n \to (0,1)$ be the affine map with the property that $i_n(W_n)=(0,1)$ and
let $f_n=i_0 \circ f^{nl} \circ i_n^{-1}$.
By Lemma E13 in \cite{RP}, there is $b > 0$ such that $\| \ln df_n\|_{C^{r-1}} \le b$, for every $n \ge 1$.
Hence, by Lemma E15 in \cite{RP}, there is a small $\epsilon >0$ and a subsequence $f_{k_n}$
  converging to a $C^r$ diffeomorphism  $\underline{f}:(0,1) \to (0,1) $ in the $C^{r-\epsilon}$ norm.

Let $W_n'=h(W_n)$ and $j_n:W_n' \to (0,1)$ be the affine map with the property that $j_n(W_n')=(0,1)$, for every $n \ge 1$.
Let $g_n=j_0 \circ g^{nl} \circ j_n^{-1}$.
By Lemma E13 in \cite{RP}, there is $b > 0$ such that $\| \ln dg_n\|_{C^{r-1}} \le b$, for all $n \ge 1$.
Hence, by Lemma E15 in \cite{RP}, there is a small $\epsilon >0$ and a subsequence $m_n$ of the sequence $k_n$ such that
$g_{m_n}$
  converges to a $C^r$ diffeomorphism $\underline{g}$ in the $C^{r-\epsilon}$ norm.

Let $h_n=j_n \circ h \circ i_n^{-1}$. Since $h$ is a $C^{1+\alpha}$ diffeomorphism, there is a sequence $\lambda_n$ tending to $1$ such that
$$\frac{|h_n(z)-h_n(y)|}{|h_n(y)-h_n(x)|} \frac{|y-x|}{|z-y|} \le \lambda_n$$
for all $x,y,z \in (0,1)$.
Hence, $\underline{h}=\lim h_n$ is an affine map.

We note that $h|W_0 = j_0^{-1}  \circ g_n  \circ h_n \circ f_n^{-1} \circ i_0$, for every $n \ge 1$.
Hence, $$h|W_0 = \lim j_0^{-1}  \circ g_{m_n}  \circ h_{m_n} \circ f_{m_n}^{-1} \circ i_0 =
j_0^{-1}  \circ \underline{g}  \circ \underline{h} \circ \underline{f}^{-1} \circ i_0 .$$
Since, $\underline{g}$, $\underline{h}$ and $\underline{f}$ are $C^r$ diffeomorphisms, we obtain that $h|W_0$ is a $C^r$ diffeomorphism.
\cqd

\section{Nearby expanding set}
\label{EEEXXX}

We will prove that for every nearby expanding point $p \in \NE(f)$
there is an open set $V$ such that $(p,V)$ is a  $1$-zooming pair.

Given any $K\subset\RR$ and $r>0$, set $B_{r}(K)=\bigcup_{p\in K}B_{r}(p)$, where $B_{r}(p)=(p-r,p+r)$.

Recall that the {\em
Schwarzian derivative of} $f$ in the complement of the critical
points is defined by
 $$
 \cs f:= \frac{f'''}{f'}-\frac32\left(\frac{f''}{f'}\right)^2.
 $$

\begin{Lemma} [Nearby expanding point originates a zooming pair]
\label{zoom6}
Let $f$ and $g$  be  $C^3$  multimodal maps topologically conjugated by $h$,
with no periodic attractors and no neutral periodic points.
For every $x\in {\NE}(f)$ there is an interval $V$ such that $(x,V)$ is a $1$-zooming pair.
Furthermore, for every $x\in {E}(f)$ there is an interval $V$ such that $(x,V)$ is a central $1$-zooming pair.
\end{Lemma}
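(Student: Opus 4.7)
My plan is to extract a candidate zooming pair directly from the defining data of a nearby expanding point and then verify the two bounded-distortion conditions via the standard $C^{3}$ Koebe principle. Fix $p\in \NE(f)$ and let $p_n\to p$, intervals $V_n\ni p_n$, integers $k_n\to\infty$ and radius $\delta>0$ be as in the definition, so that each $f^{k_n}|_{V_n}$ is a diffeomorphism onto $B_\delta(q_n)$, where $q_n:=f^{k_n}(p_n)$. Passing to a subsequence, I may assume $q_n\to q$ for some $q\in I$, and I take once and for all the fixed interval $V:=B_{\delta/2}(q)$. For $n$ large enough, $V\subset B_\delta(q_n)$ with both complementary components of length bounded below by a uniform positive multiple of~$|V|$, so I define $\widetilde V_n\subset V_n$ to be the connected component of $(f^{k_n}|_{V_n})^{-1}(V)$ containing~$p_n$. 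By the conjugacy relation $h\circ f^{k_n}=g^{k_n}\circ h$, the map $g^{k_n}|_{h(\widetilde V_n)}$ is then a diffeomorphism onto $h(V)$.

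The next step is the $\alpha=1$ bounded-distortion estimate on both sides. The Koebe distortion principle for $C^{3}$ non-flat multimodal maps yields, from the uniform Koebe space of $V$ inside $B_\delta(q_n)$, a constant $C>0$ independent of $n$ with
\[
\lrd_{f^{k_n}}(x,y,z)\le C\,|f^{k_n}(z)-f^{k_n}(x)|
\qquad\text{for every }x<y<z\text{ in }\widetilde V_n,
\]
which is precisely the $1$-bounded distortion required by Definition~\ref{desp5}. On the $g$ side, the same Koebe lemma applied to $g$ gives the symmetric estimate provided $h(V)$ sits inside $h(B_\delta(q_n))=g^{k_n}(h(V_n))$ with uniform Koebe space. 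This is where the continuity of $h$ enters: since $V\Subset B_\delta(q)$ and $q_n\to q$, the two gaps of $h(V)$ inside $h(B_\delta(q_n))$ converge to the (positive) gaps of $h(V)$ inside $h(B_\delta(q))$, hence their ratios with $|h(V)|$ are uniformly bounded below in~$n$.

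The delicate step, and the one I expect to be the main obstacle, is condition~(1) of Definition~\ref{desp5}, namely $\sup_{y\in\widetilde V_n}|y-p|\to 0$. Since $p_n\to p$ and $p_n\in\widetilde V_n$, it suffices to show $|\widetilde V_n|\to 0$. If this failed, after a further subsequence all the $\widetilde V_n$ would contain a common nondegenerate interval~$J$; then by the distortion bound above, $|(f^{k_n})'|$ would be uniformly bounded on~$J$ even though $k_n\to\infty$ and each $f^{k_n}|_J$ is a diffeomorphism. This is ruled out for $C^{3}$ non-flat multimodal maps satisfying our two standing hypotheses (no periodic attractors and no neutral periodic points), either by a Ma\~n\'e-type hyperbolicity argument away from the critical set, or equivalently by invoking the absence of wandering intervals (Martens--de Melo--van Strien): iterates of $f$ that remain diffeomorphic on a fixed interval must develop unbounded expansion. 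Hence $|\widetilde V_n|\to 0$ and $(p,V)$ is a $1$-zooming pair.

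Finally, the central case is immediate: if $p\in E(f)$ then the definition gives $p_n=p$ for every $n$, so $p\in V_n$ for all $n$ and hence $p\in\widetilde V_n$ for all large~$n$, making $(p,V)$ a central $1$-zooming pair.
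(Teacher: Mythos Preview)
Your overall framework---fixing a limit target $V\subset B_\delta(q)$ by compactness, pulling it back to $\widetilde V_n$, and verifying the zooming-pair axioms one by one---is the same as the paper's, and your treatment of the shrinking $|\widetilde V_n|\to 0$ (which is Lemma~\ref{zoom5}) and of the central case is fine. The gap is the distortion step, and it is precisely the content of the lemma.

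You assert that ``the Koebe distortion principle for $C^{3}$ non-flat multimodal maps yields, from the uniform Koebe space of $V$ inside $B_\delta(q_n)$, a constant $C>0$ independent of $n$'' with the $1$-bounded distortion inequality. There is no such black-box result. The Koebe principle, in the form that controls cross-ratio distortion purely in terms of the Koebe space of the image, requires negative Schwarzian derivative; for merely $C^{3}$ (or $C^{2}$) maps one has to supplement it with a hypothesis such as a bound on $\sum_j |f^j(T)|^2$, and nothing in the nearby-expanding data provides that a priori. The iterates $f^{k_n}|_{V_n}$ may pass arbitrarily close to the critical set arbitrarily many times, and each such passage injects distortion that the final Koebe space alone does not see. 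The same objection applies to your $g$-side argument.

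This obstacle is exactly what the paper's proof is built to overcome, and it occupies essentially the whole argument. One first invokes van Strien--Vargas to obtain $\gamma>0$ with $Sf^{n+1}(x)<0$ (and likewise for $g$) whenever $f^n(x)\in B_\gamma(\mathcal C_f)$, then constructs three nested \emph{nice} neighborhoods $J_0\subset J_1\subset J_2\subset B_\gamma(\mathcal C_f)$ via Lemma~\ref{NICE}. For each orbit segment $[0,m_j]$ one records the last visit time $n_j$ to $J_1$ and splits the iterate as $f^{m_j}=f^{m_j-n_j-1}\circ f^{n_j+1}$. The outer factor stays away from the critical set (or has a uniformly bounded tail, handled by a further case analysis on whether $m_j-n_j$ is bounded and whether $J_0$ is re-entered), so Ma\~n\'e's Theorem~\ref{zoom1} gives the $\alpha=1$ distortion bound there; the inner factor lands in $B_\gamma(\mathcal C_f)$, so the \emph{negative-Schwarzian} Koebe applies to it, with the requisite Koebe space transported back from the end by the Ma\~n\'e estimate. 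The niceness of the $J_i$ is what makes the case analysis close. Your single invocation of ``Koebe for $C^3$ maps'' cannot substitute for this decomposition.
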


\begin{proof} By \cite{SV}, there is $\gamma > 0$  such that,
for every point $x \in I$, with
$$f^n(x) \in \bigcup_{c \in C(f)} B_{\gamma}(c)
~~~\mbox{ and }~~~
g^n(h(x)) \in \bigcup_{c \in C(f)} h(B_{\gamma}(c))  , $$
we have  $Sf^{n+1}(x)< 0$ and $Sg^{n+1}(h(x))< 0$.

By Lemma~\ref{NICE}, one  find $\gamma_{0}<\gamma_{1}<\gamma_{2}<\gamma_{3}<\gamma_{4}<\gamma$ and nice sets $J_{0},J_{1},J_{2}$ such that $$B_{\gamma_{0}}(\cc_{f})\subset J_{0}\subset B_{\gamma_{1}}(\cc_{f})\subset B_{\gamma_{2}}(\cc_{f})\subset J_{1}\subset B_{\gamma_{3}}(\cc_{f})\subset B_{\gamma_{4}}(\cc_{f})\subset J_{2}\subset B_{\gamma}(\cc_{f}).$$
Let $J_{i}=\bigcup_{c\in\cc_{f}}J_{i}(c)$, $c\in J_{i}(c)=(a_{i}(c),b_{i}(c))$ for every $c\in\cc_{f}$ and $i=0,1,2$.

Given $x\in {\NE}(f)$, for some small $\delta>0$,     take a sequence of points $x_{j}\to x$ and intervals $W_{j}^{0}\ni x_{j}$ such that $f^{m_j}|_{\overline{W_{j}^{0}}}$ is a diffeomorphism and $f^{m_j}(W_{j}^{0})=B_{2\delta}(f^{m_j}(x_{j}))$ for $m_{j}\to\infty$.
Let $W_{j}\subset W_{j}^{0}$ be the interval such that $f^{m_{j}}(W_{j})=B_{\delta}(x_{j})$ and  let $L_{j}^{0}, R_{j}^{0}$ be the connected components of $W_{j}^{0}\setminus W_{j}$.

For every $j\ge1$, define $n_{j}$ as follows:  If $f^{i}(x_{j})\notin J_{1}$, for every $0\le i<m_{j}$, take $n_{j}=-1$; otherwise, take $n_{j}<m_{j}$ as the biggest integer such that $f^{i}(x_{j})\in J_{1}$.

Our goal is to obtain a sequence $j_{i}\to+\infty$ and intervals $V_{j_{i}}\subset W_{j_{i}}^{0}$ containing
$x_{j_{i}}$ with the following properties:
 $\inf_{i}|f^{m_{j_{i}}}(V_{j_{i}})|>0$ and   the ratio distortion of $f^{m_{j_{i}}}|_{V_{j_{i}}}$   uniformly bounded. If $n_{j}=-1$ take $V_{j}=W_{j}$. In this case, $|f^{m_{j}}(V_{j})|=2\delta$ and the bounded of the ratio distortion follows from Theorem~\ref{zoom1}, because $J_{1}\supset B_{\gamma_{2}}(\cc_{f})$. Thus, we   assume from now on that $n_{j}\ne-1$.

If $\liminf_{j} m_{j}-n_{j}<\infty$, let $V_{j}$ be the maximal interval such that  $x_{j}\in V_{j}\subset W_{j}$ and $f^{n_{j}}(V_{j}) \subset J_{2}$. Taking a subsequence, we   assume that there is $K>0$ such that $m_{j}-n_{j}\le K$ for every $j$.

Since $Df^{m_{j}}\ne0$ in $W_{j}$ and $f^{n_{i}}(W_{i})\cap J_{1}\ne\emptyset$ and by maximality of $V_{j}$, if $W_{j}\ne V_{j}$ then $$f^{n_{j}}(V_{j})\supset (c_{j}-\gamma_{4},c_{j}-\gamma_{3})~~~ {\rm or} ~~~ f^{n_{j}}(V_{j})\supset(c_{j}+\gamma_{3},c_{j}+\gamma_{4})$$ for some $c_{j}\in\cc_{f}$. In particular, $|f^{n_{j}}(V_{j})|\ge\gamma_{4}-\gamma_{3}$. Thus, there is $\ve>0$ such that,  for every $j$, $|f^{m_{j}}(V_{j})|>\ve>0$, $|f^{m_{j}}(V_{j})|=|f^{m_{j}}(W_{j})|=2\delta$ or $f^{m_{j}}(V_{j})$ is a finite iteration of an interval with length greater than $\gamma_{4}-\gamma_{3}$. Furthermore, since
\begin{equation}\label{eq098iu}|f^{m_{j}}(L_{j})|/|f^{m_{j}}(W_{j})|=|f^{m_{j}}(R_{j})|/|f^{m_{j}}(W_{j})|=1/2\,\,\,   {\rm for} {\rm \, every} \,j,
\end{equation}
we get that
$$\frac{|f^{n_{j}+1}(L_{j})|}{|f^{n_{j}+1}(V_{j})|}\ge\frac{|f^{n_{j}+1}(L_{j})|}{|f^{n_{j}+1}(W_{j})|}$$
 and
 $$\frac{|f^{n_{j}+1}(R_{j})|}{|f^{n_{j}+1}(V_{j})|}\ge\frac{|f^{n_{j}+1}(R_{j})|}{|f^{n_{j}+1}(W_{j})|}$$
  are bounded away from zero. Since $Sf^{n_{j}+1}(z)<0$, for every
  $$z\in f^{-n_{j}}(B_{\gamma}(\cc_{f}))\supset f^{-n_{j}}(B_{\gamma_{5}}(\cc_{f}))\supset f^{-n_{j}}(J_{2})\supset V_{j} ,$$
   the ratio distortion of $f^{n_{j}+1}|_{V_{j}}$ is uniformly bounded ($V_{j}\subset W_{j}$).
Thus, the ratio distortion of $f^{m_{j}}|_{V_{j}}$ is also uniformly bounded and $|f^{m_{j}}(V_{j})|>\ve>0$  for every $j$.

Let us consider the case $\liminf_{j} m_{j}-n_{j}=\infty$. Taking a subsequence, if necessary, we   assume that $\lim_{j} m_{j}-n_{j}=\infty$.

\begin{Claim}
$f^{n_{j}}(W_{j}^{0})\subset J_{2}$ for every $j \in \mathbb{N}$.
\end{Claim}
\dem[Proof of the claim]
Let $V_{j}^{0}$ be the maximal interval such that
$$x_{j}\in V_{j}^{0}\subset W_{j}^{0}~~~ {\rm and}~~~ f^{n_{j}}(V_{j}^{0}) \subset J_{2}.$$
 We will show that $W^{0}_{j}=V^{0}_{j}$.

By the maximality of $V_{j}^{0}$, if $W_{j}^{0}\ne V_{j}^{0}$ then there is $p_{2,j}\in\partial J_{2}\cap\partial (f^{n_{j}}(V_{j}^{0}))$.
On the other hand, since $f^{n_{j}}(x_{j})\in J_{1}$, there is $p_{1,j}\in\partial J_{1}$ such that
$$f^{n_{j}}(V_{j}^{0})\supset (p_{1,j},p_{2,j})~~~{\rm or}~~~f^{n_{j}}(V_{j}^{0})\supset(p_{2,j},p_{1,j}) . $$
If $p_{1,j}<p_{2,j}$ take $T_{j}=(p_{1,j},p_{2,j})$;  otherwise, take  $T_j=(p_{2,j},p_{1,j})$. Since $J_{1}$ and $J_{2}$ are nice sets with $J_{1}\subset J_{2}$, it follows that $f^{k}(\partial T_{j})\cap J_{1}=\emptyset$  for every $k\ge0$. Hence, if $\ell_{j}\ge0$ is the smaller integer such that $f^{\ell_{j}}(T_{j})\cap J_{1}\ne\emptyset$, then $f^{\ell_{j}}(T_{j})\cap J_{1}(c_{j})\ne\emptyset$ for some $c_{j}\in\cc_{f}$. Furthermore,  $f^{\ell_{j}}(T_{j})\supset J_{1}(c_{j})$. However, since $Df^{m_{j}}\ne0$ on $W_{j}^{0}$, we get $\ell_{j}\ge m_{j}-n_{j}$.
Thus, it follows from Theorem~\ref{zoom1} that $$4\delta=|f^{m_{j}}(W_{j}^{0})|\ge|f^{m_{j}}(V_{j}^{0})|\ge|f^{m_{j}-n_{j}}(T_{j})|\ge C\lambda^{m_{j}-n_{j}}|T_{j}|\ge$$ $$\ge C\lambda^{m_{j}-n_{j}}(\gamma_{4}-\gamma_{3})\to\infty\mbox{ (for a subsequence)}.$$ Hence, we get a  contradiction.
\cqd

By Theorem~\ref{zoom1}, if $f^{i}(W_{j}^{0})\cap J_{0}=\emptyset$, for every  $n_{j}<i<m_{j}$, then $f^{m_{j}-(n_{j}+1)}$ has uniformly bounded distortion on $f^{n_{j}+1}(W_{j}^{0})$ not dependent upon $j$. In particular,
$$|f^{n_{j}+1}(L_{j})|/|f^{n_{j}+1}(W_{j})|~~~{\rm and}~~~ |f^{n_{j}+1}(R_{j})|/|f^{n_{j}+1}(W_{j})|$$ are bounded away from zero.
Since $f^{n_{j}}(W_{j}^{0})\subset B_{\gamma}(\cc_{f})$ and  $Sf^{n_{j}+1}(z)<0$, for every $z \in W_{j}^{0}$, the ratio distortion of $f^{n_{j}+1}|_{W_{j}}$ is uniformly bounded.
Thus, taking $V_{j}=W_{j}$, the ratio distortion of $f^{m_{j}}|_{V_{j}}$ is uniformly bounded and $|f^{m_{j}}(V_{j})|=2\delta$ for every $j$.

From now on, we will assume not only that $m_{j}-n_{j}\to\infty$, but also that $f^{i}(W_{j}^{0})\cap J_{0}\ne\emptyset$ for some $n_{j}<i<m_{j}$.

Let $k_{j}$ be the smaller integer $\ell>n_{j}$ such that $f^{\ell}(W_{j}^{0})\cap J_{0}\ne\emptyset$, i.e. $$k_{j}=\min\{\ell>n_{j}\,;\,f^{\ell}(W_{j}^{0})\cap J_{0}\ne\emptyset\}.$$

\begin{Claim}
There is $K>0$ such that $m_{j}-k_{j}\le K$, for every $j \in \mathbb{N}$.
\end{Claim}
\dem[Proof of the claim]
Since $f^{\ell}(x_{j})\notin J_{1}$, for all $n_{j}<j<m_{j}$, there is a connected component $T_{j}$ of $J_{1}\setminus\overline{J_{0}}$ such that $T_{j}\subset f^{k_{j}}(W_{j}^{0})$.
Since $J_{0}$ and $J_{1}$ are nice sets with $J_{0}\subset J_{1}$, it follows that
$$f^{i}(\partial T_{j})\cap J_{0}=\emptyset$$
 for all $i\ge0$. So, if $\ell_{j}\ge0$ is the smaller integer such that
 $$f^{\ell_{j}}(T_{j})\cap J_{0}\ne\emptyset ,$$ i.e. $f^{\ell_{j}}(T_{j})\cap J_{0}(c_{j})\ne\emptyset$ for some $c_{j}\in\cc_{f}$.
  Thus,  $f^{\ell_{j}}(T_{j})\supset J_{0}(c_{j})$. Since $f^{m_{j}}|_{W_{j}^{0}}$ in a diffeomorphism, we get $\ell_{j}\ge m_{j}-k_{j}$.
Thus, from Theorem~\ref{zoom1}, it follows  that  $$4\delta=|f^{m_{j}}(W_{j}^{0})|\ge|f^{m_{j}-n_{j}}(T_{j})|\ge C\lambda^{m_{j}-k_{j}}|T_{j}|\ge C\lambda^{m_{j}-n_{j}}(\gamma_{2}-\gamma_{1}),$$
for every $j \in \mathbb{N}$. Since $\lambda>1$, we necessarily have $m_{j}-k_{j}$ bounded.
\cqd

Using Theorem~\ref{zoom1}, we  conclude that $f^{k_{j}-(n_{j}+1)}$ has uniformly bounded distortion on $f^{n_{j}+1}(W_{j}^{0})$ (not dependent upon $j$).
Since $0\le m_{j}-k_{j}\le K$ and $f^{m_{j}}|_{\overline{W_{j}^{0}}}$ is a diffeomorphism, we obtain that $f^{m_{j}-(n_{j}+1)}$ has uniformly bounded distortion on $f^{n_{j}+1}(W_{j}^{0})$ (also not dependent upon $j$).
Thus,
$$|f^{n_{j}+1}(L_{j})|/|f^{n_{j}+1}(W_{j})|~~~ {\rm and} ~~~|f^{n_{j}+1}(R_{j})|/|f^{n_{j}+1}(W_{j})|$$ are bounded away from zero.
Since $Sf^{n_{j}+1}(z)<0$ the ratio distortion of $f^{n_{j}+1}|_{W_{j}}$ is uniformly bounded  for all $z \in W_{j}^{0}$.
Again, taking $V_{j}=W_{j}$, the ratio distortion of $f^{m_{j}}|_{V_{j}}$ is uniformly bounded and $|f^{m_{j}}(V_{j})|=2\delta$ for all $j$.

Thus, replacing $j$ by a subsequence, we get intervals $V_{j}\subset W_{j}^{0}$ containing $x_{j}$ with the following properties:
 $\inf_{j}|f^{m_{j}}(V_{j})|>0$, the ratio distortion of $f^{m_{j}}|_{V_{j}}$ is uniformly bounded and
  the ratio distortion of $g^{m_j}|_{h({V}_{j})}$ is also uniformly bounded.

By compactness, taking a subsequence, there is an open interval $V$ and a sequence of intervals $x_{j}\in {V}'_{j}\subset{V}_{j}$, $j\ge1$, such that $f^{s_{j}}({V}'_{j})=V$, for all $j$. Thus, $(x,V)$ is a $1$-zooming pair.
Similarly, if $x\in {E}(f)$ there is an interval $V$ such that $(x,V)$ is a central $1$-zooming pair.
\end{proof}

\begin{Lemma} [Explosion of smoothness at expanding points]
\label{zoom7}
Let $f$ and $g$  be  $C^3$  multimodal maps topologically conjugated by $h$,
with no periodic attractors and no neutral periodic points.
Let the conjugacy $h$ be $C^1$ at a point $x$.
If $x\in {\NE}(f)$, then  there is an open interval $V$ such that $h|V$ is $C^r$.
\end{Lemma}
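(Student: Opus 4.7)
The plan is to chain together the four lemmas that have already been established in the preceding two sections, so essentially no new analytic work is needed. Given $x \in \NE(f)$, I would first invoke Lemma \ref{zoom6} to produce an open interval $V$ such that $(x,V)$ is a $1$-zooming pair, i.e.\ there exist intervals $V_n$ shrinking down to $x$ and integers $k_n \to \infty$ with $f^{k_n}|_{V_n}$ and $g^{k_n}|_{h(V_n)}$ diffeomorphisms onto $V$ and $h(V)$, each enjoying $1$-bounded distortion in the sense of the definition.

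Next I would pass from $C^1$ regularity of $h$ at the point $x$ to the weaker notion of (uaa) at $x$, which is precisely the content of Lemma \ref{esp1}. At this stage we have both ingredients required to apply Lemma \ref{esp3}, except for one minor bookkeeping adjustment: Lemma \ref{esp3} is stated for $\alpha$-zooming pairs with $0<\alpha<1$, whereas what we have produced is a $1$-zooming pair. The remedy is the elementary observation that, since the relevant intervals $V,h(V)$ are bounded, the distortion estimate \eqref{gfsge1} with exponent $1$ trivially implies the same estimate with any exponent $\alpha\in(0,1)$ (just use $|y|\le \text{diam}^{1-\alpha}|y|^{\alpha}$), so $(x,V)$ is in particular an $\alpha$-zooming pair for every $\alpha<1$. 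Fixing any such $\alpha$, Lemma \ref{esp3} then yields that $h|V$ is a $C^{1+\alpha}$ diffeomorphism onto its image.

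Finally, with $C^{1+\alpha}$ regularity on the open interval $V$ in hand, I would apply Lemma \ref{esp4}, which upgrades this to $C^r$ regularity on some possibly smaller open subinterval $W\subset V$. The conclusion of the lemma is exactly that there is an open interval on which $h$ is $C^r$, so this closes the argument.

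The only step that requires any thought at all is the compatibility between Lemma \ref{zoom6} (which delivers $\alpha=1$) and Lemma \ref{esp3} (which is stated for $\alpha<1$); but as noted above this is purely formal and costs only a constant in the distortion bound. The heavy lifting — constructing the zooming pair from the nearby expanding structure, controlling distortion via negative Schwarzian and nice sets, and bootstrapping $C^{1+\alpha}$ to $C^r$ using the renormalization limit argument around a repelling periodic point — has already been carried out in Lemmas \ref{zoom6}, \ref{esp3} and \ref{esp4}, so the proof of Lemma \ref{zoom7} itself is essentially a one-paragraph assembly.
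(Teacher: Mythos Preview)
Your proposal is correct and follows exactly the same four-step assembly as the paper's proof: Lemma~\ref{zoom6} for the zooming pair, Lemma~\ref{esp1} for uaa, Lemma~\ref{esp3} for $C^{1+\alpha}$, and Lemma~\ref{esp4} for the upgrade to $C^r$. Your explicit remark that a $1$-zooming pair is automatically an $\alpha$-zooming pair for any $\alpha<1$ is a point the paper's proof simply glosses over, so your write-up is in fact slightly more careful.
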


\begin{proof}
By Lemma \ref{zoom6}, if $x\in {\NE}(f)$ there is an interval $V$ such that $(x,V)$ is a $1$-zooming pair.
 Since   $h$ is $C^{1}$ at $x$, then by Lemma \ref{esp1} we have that $h$ is uaa at $x$. Thus,
it follows from Lemma~\ref{esp3} that $h|V$ is a $C^{1+\alpha}$ diffeomorphism. Hence,  by Lemma \ref{esp4}, $h|W$ is a $C^{r}$ diffeomorphism for some $W \subset V$.
\end{proof}

\section{Smooth conjugacy and renormalization intervals}
\label{RRIIII}

In this section we assume that $f$ and $g$ are $C^r$ multimodal maps, with $r \ge 3$ and no periodic attractors nor neutral periodic points.
Furthermore, we assume that $h$ is a topological conjugacy  between  $f$ and $g$  preserving the order of the critical points. We define $$s=\min_{\{c \in C_f\}} \ord_f(c).$$

 \begin{Definition} [Smooth conjugacy  domain]
 For $s \le t \le r$, the \emph{t-smooth conjugacy interval}  $V$ is an open set    $V$ such that $h|V$ is a $C^t$ diffeomorphism. The set $C_f^t \subseteq C_f$
consists of all critical points $c$ such that there is a t-smooth conjugacy open interval $V$ containing   $c \in V$.
 For every $c \in C_f^s$,  the \emph{s-smooth conjugacy maximal interval $J^s(c)$ of} $c$ is the maximal open interval $J^s(c)$ containing $c$ such that $h$ is $C^{s}$ in $J^s(c)$.
The \emph{s-smooth conjugacy  domain $J^s$} is
 $$J^s= \cup_{c \in C_f^s}J^s(c) .$$
We say that a  critical point  $c \in C_f$ is \emph{$s$-recurrent}, if
there is $n=n(c,s) \ge 1$ such that
$J^s(c) \cap f^{n}J^s(c) \ne \emptyset$.
Let $\CR^s \subset C_f$ be the set of all $s$-recurrent critical points.
Let $J^s_R = \cup_{c \in \CR^s} J^s(c)$.
 \end{Definition}

\begin{Lemma} [Spreading smooth conjugacy intervals]
\label{ren0}
Let $h$ be a topological conjugacy  between  $f$ and $g$ and let $s \le t \le r$. Then
\begin{enumerate}
\item if $V$ is a t-smooth conjugacy interval then  $\inter(f(V))$ is a t-smooth conjugacy interval;
\item if $V$ is a t-smooth conjugacy interval then the connected components of $f^{-1}(V)\setminus (f^{-1}(V) \cap C_f)$
are t-smooth conjugacy intervals; and
\item  if $V$ is an s-smooth conjugacy interval then  $f^{-1}(V)$ is an s-smooth conjugacy interval.
\end{enumerate}
Furthermore,
\begin{enumerate}
\item
if $c \in C_f$  and, for some small open interval $V$ containing $c$ and some $n$, $f^n(V) \subset J^s$
then $c \in C_f^s$; and
\item
if $c \in C_f$  and, for some small open interval $V \subset J^r$  and some $n$,  $c \in \inter(f^n(V))$
then $c \in C_f^r$.
\end{enumerate}
\end{Lemma}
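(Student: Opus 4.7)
The plan rests on the conjugacy relation $h \circ f = g \circ h$: whenever one can invert $f$ (or $g$) locally as a $C^r$ diffeomorphism, one obtains an expression for $h$ as a composition of $C^r$ and $C^t$ maps, hence $C^t$. The key observation is that because $h$ preserves the order of the critical points, it maps $C_f$ to $C_g$ bijectively, so at any non-critical $x$ the image $h(x)$ is also non-critical, and both $f$ and $g$ admit local $C^r$ inverse branches at $x$ and $h(x)$ respectively.

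For item (1), take $y \in \inter(f(V))$ that is not a critical value of $f$: any preimage $x \in V$ of $y$ is non-critical, so the $C^r$ inverse branch $\phi$ of $f$ exists near $y$ with image in $V$, and the identity $h|_{\phi(U)} = g \circ (h|_V) \circ \phi$ shows $h$ is $C^t$ near $y$ (using $t \le r$). Critical values of $f$ inside $\inter(f(V))$ are finitely many, so this already gives $h$ is $C^t$ on an open dense subset of $\inter(f(V))$; a critical value $y_0 \in \inter(f(V))$ is handled by observing that interiority forces preimages in $V$ covering both sides of $y_0$ (either through a non-critical preimage, or through two critical preimages of opposite monotonicity type), and then the non-flat normal form together with $h|_V$ being $C^t$ matches derivatives across $y_0$. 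For item (2), at each $x \in f^{-1}(V) \setminus C_f$ the local $C^r$ inverse of $g$ near $h(f(x))$ exists, and the formula $h = g^{-1} \circ (h|_V) \circ f$ produces a $C^t$ representation of $h$ near $x$; since connected components of $f^{-1}(V) \setminus C_f$ are open intervals, $h$ is a $C^t$ diffeomorphism on each.

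Item (3) is the new ingredient and is where $s = \min_{c} \ord_f(c)$ actually enters. Non-critical preimages are handled exactly as in (2). At a critical point $c \in f^{-1}(V)$ with $\alpha = \ord_f(c) \ge s$, use the non-flat normal forms $f(c+x) = f(c) + \phi(|x|^\alpha)$ and $g(h(c)+y) = g(h(c)) + \psi(|y|^\alpha)$, where the exponents agree because $h$ preserves critical orders. Writing $H(z) = h(f(c)+z) - h(f(c))$, which is a $C^s$ diffeomorphism near $0$ since $f(c) \in V$ and $h|_V$ is $C^s$, the conjugacy relation $h \circ f = g \circ h$ at $c+x$ becomes
\begin{equation*}
H(\phi(|x|^\alpha)) = \psi(|h(c+x)-h(c)|^\alpha),
\end{equation*}
so $|h(c+x)-h(c)|^\alpha = G(|x|^\alpha)$ with $G = \psi^{-1} \circ H \circ \phi$ a $C^s$ diffeomorphism fixing $0$. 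Exploiting monotonicity of $h$, one extracts the $\alpha$-th root and obtains an explicit form $h(c+x) - h(c) = \pm\, x\, \Psi(|x|^\alpha)$ for some $C^s$ function $\Psi$ with $\Psi(0) \ne 0$; because $s \le \alpha$, the expression is genuinely $C^s$ at $c$, and combined with the non-critical case this proves $f^{-1}(V)$ is an $s$-smooth conjugacy set.

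The two \emph{furthermore} statements follow by iterating the above. For (a), apply item (3) repeatedly: $f^{-1}(J^s)$ is an $s$-smooth conjugacy set, hence so is $f^{-n}(J^s)$, which contains $V \ni c$, giving $c \in C_f^s$. For (b), apply item (1) repeatedly: $\inter(f^n(V))$ is an $r$-smooth conjugacy set containing $c$ (using $c \in \inter(f^n(V))$ and the fact that nested interiors satisfy $\inter(f(\inter(W))) \subseteq \inter(f(W))$), giving $c \in C_f^r$. The main obstacle is the critical-point case of item (3): verifying that the implicit identity $|h(c+x)-h(c)|^\alpha = G(|x|^\alpha)$ with $G$ a $C^s$ diffeomorphism really produces a $C^s$ map at $c$ requires careful handling of the $\alpha$-th root and of the derivatives of $|x|^\alpha$ at $0$, and the choice $s = \min_c \ord_f(c)$ is precisely what guarantees the output regularity is not worse than the input, making the pullback operation stable under iteration.
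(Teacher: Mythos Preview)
Your proposal is correct and follows essentially the same route as the paper: items (1) and (2) via the local composition formulas $h = g \circ (h|_V) \circ f^{-1}$ and $h = g^{-1} \circ (h|_V) \circ f$ respectively, and item (3) via the non-flat normal forms at matched critical orders, reducing to an expression of the type $h(c+x)-h(c) = x\,(c_0 + |x|^\alpha \theta(|x|^\alpha))^{1/\alpha}$, which is exactly the paper's computation. Your treatment is in fact slightly more careful than the paper's at two points: you separate out the critical-value case in item (1), and you explicitly derive the ``furthermore'' clauses by iterating (3) and (1), which the paper leaves implicit.
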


\dem   Since $f$ is a multimodal map, the interior of  $f(V)$ is an open interval and for every $x \in f(V)$
there is an open interval $W$ such that $x \in f(W)$ and $f|W$ is a $C^r$ diffeomorphism.
Hence, $h|f(W)=  g \circ  h \circ (f|W)^{-1}$ is a $C^r$ diffeomorphism.

For every   $x  \in f^{-1}(V)\setminus (f^{-1}(V) \cap C_f)$,
there is an open interval $W$ such that $x \in  W$ and $f|W$ is a $C^r$ diffeomorphism.
Hence, $h|W=  (g|_{h(W)})^{-1} \circ  h \circ f$ is a $C^r$ diffeomorphism.

Let $c \in f^{-1}(V) \cap C_f$ and $c'=h(c)$. Let $b=f(c)$ and $b'=g(h(c))$.
Recall that
$f(c + x) = f(c) + \phi (|x|^\alpha)$ and $g(c' + x) = g(c') + \psi (|x|^\alpha)$.
Hence, there is a small open interval $W$ containing $c$, such that for every $c+ x \in W$
$$(g|_{h(W)})^{-1} (y)= c' + (\psi^{-1}  (y - g(c'))^{1/\alpha},$$ if $x \ge 0$
and
$$(g|_{h(W)})^{-1} (y-g(c'))= c' - (\psi^{-1}  (y - g(c'))^{1/\alpha},$$ if $x \le 0$.
Hence,  if $x \ge 0$
$$h|W=  (g|_{h(W)})^{-1} \circ  h \circ f= c' +   [\psi^{-1}  ( - g(c') +  h  ( f(c) + \phi (|x|^\alpha)))]^{1/\alpha}$$
and if $x \le 0$
$$h|W=  (g|_{h(W)})^{-1} \circ  h \circ f= c' -   [\psi^{-1}   ( - g(c') +  h  ( f(c) + \phi (|x|^\alpha)))]^{1/\alpha}.$$

The map $\psi^{-1}  ( - g(c') +  h  ( f(c) + y))$ is a $C^r$ diffeomorphism. Hence, by Taylor's theorem, there is a constant $c$ and $C^r$ diffeomorphism $\theta$
such that   $\psi^{-1}  ( - g(c') +  h  ( f(c) + y))= y (c+ y \theta (y))$.
Therefore,
$$h|W=     x (c+ |x|^\alpha \theta (|x|^\alpha))^{1/\alpha}.$$
Hence, $h$ is a $C^s$ diffeomorphism.
\cqd

Denoting by $p$ and $q$ the points that form the boundary of an interval $V$,  the set $V$ is \emph{dynamically symmetric} if
  either $f(p)=f(q)$ or
$f(p)$ and $f(q)$ form the boundary of $f(V)$.

\begin{Lemma} [Nice $J^s$]
\label{ren1}
If $h$ is a $C^s$ diffeomorphism in an open set $V$ then
the $s$-conjugacy maximal domain $J^s$ is a non-empty and there is $n \ge 0$ such that $ \inter(f^n(V)) \subset J^s$.
Furthermore,
\begin{enumerate}
\item
if $c \in C_f^s$ the set $J^s(c)$ is dynamically symmetric;
\item
for all $c_1,c_2 \in C_f^s$ the  sets $J^s(c_1)$ and $J^s(c_2)$ are either disjoint or equal; and
\item
 the $s$-conjugacy maximal domain $J^s$ is a  nice set.
\end{enumerate}
\end{Lemma}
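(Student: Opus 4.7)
The plan is to establish the main claim (non-emptiness of $J^s$ and the eventual trapping of $V$) together with the three enumerated properties, combining the forward and backward spreading of $s$-smooth conjugacy intervals from Lemma~\ref{ren0} with the maximality built into the definition of $J^s(c)$. For the main claim I would iterate $V$ under $f$: by Lemma~\ref{ren0}(1) every $\inter(f^n(V))$ is again an $s$-smooth conjugacy interval. The crucial step is to show that some $\inter(f^n(V))$ must contain a critical point of $f$ in its interior, and this is the only place where genuinely dynamical (as opposed to purely combinatorial) input is needed: under the standing hypotheses (no periodic attractors, no neutral periodic points, $C^r$ non-flat with $r\ge 3$), the no-wandering-intervals theorem for multimodal maps forbids an open interval from having a forward orbit consisting solely of diffeomorphism intervals avoiding every critical point. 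Once some $f^n(V)$ contains a critical point $c$ in its interior, $c\in C_f^s$ by definition, so $\inter(f^n(V))\subset J^s(c)\subset J^s$.

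Property (2) is the easiest: if $J^s(c_1)\cap J^s(c_2)\ne\emptyset$, then $J^s(c_1)\cup J^s(c_2)$ is a single open interval on which $h$ is $C^s$ and which contains both $c_1,c_2$; maximality of each $J^s(c_i)$ forces $J^s(c_1)=J^s(c_2)$. For property (1), write $J^s(c)=(a,b)$ and suppose dynamical symmetry fails: since $f(J^s(c))$ is a non-degenerate interval with at most two boundary points, this means one of $f(a),f(b)$, say $f(a)$, lies in $\inter(f(J^s(c)))$. By Lemma~\ref{ren0}(1) this interior is an $s$-smooth conjugacy interval, and by Lemma~\ref{ren0}(3) the preimage $f^{-1}(\inter(f(J^s(c))))$ is an $s$-smooth conjugacy open set; in particular its connected component $W$ through $a$ is an $s$-smooth conjugacy open interval. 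If $a\notin C_f$, then $J^s(c)\cup W$ is a strictly larger $s$-smooth conjugacy interval containing $c$, contradicting maximality. If $a\in C_f$, then the presence of $W\ni a$ forces $a\in C_f^s$, and since $a\in\overline{J^s(c)}$ we have $J^s(a)\cap J^s(c)\ne\emptyset$, so property (2) yields $J^s(a)=J^s(c)$; hence $a\in J^s(c)$, contradicting $a\in\partial J^s(c)$ since $J^s(c)$ is open.

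For property (3), assume for contradiction that $p\in\partial J^s\cap f^{-n}(J^s)$ for some $n\ge 0$; write $p\in\partial J^s(c)$ and $f^n(p)\in J^s(c')$. Iterating Lemma~\ref{ren0}(3) $n$ times, $f^{-n}(J^s(c'))$ is an $s$-smooth conjugacy open set, and its connected component through $p$ is an open $s$-smooth conjugacy interval $W\ni p$. If $p\notin C_f$, then $J^s(c)\cup W$ strictly contains $J^s(c)$ and is still an $s$-smooth conjugacy interval containing $c$, contradicting maximality. If $p\in C_f$, then $W\ni p$ gives $p\in C_f^s$, and property (2) yields $J^s(p)=J^s(c)$, hence $p\in J^s(c)$, again contradicting $p\in\partial J^s(c)$. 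The main obstacle is thus concentrated in the opening step, namely invoking the appropriate multimodal-dynamics fact guaranteeing that forward orbits of open intervals eventually meet critical points; the three enumerated properties then reduce to maximality arguments and direct applications of Lemma~\ref{ren0}.
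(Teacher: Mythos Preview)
Your proposal is correct and follows essentially the same route as the paper: forward-iterate $V$ via Lemma~\ref{ren0}(1) until a critical point is captured (the paper packages this step as Lemma~\ref{zoom3}, which is precisely the no-wandering-intervals consequence you invoke), then use maximality of $J^s(c)$ together with the forward/backward spreading of $s$-smoothness from Lemma~\ref{ren0} to derive the three properties. Your treatment of dynamical symmetry via Lemma~\ref{ren0}(3) on the full preimage is a slight rephrasing of the paper's argument (which instead picks an interior non-critical $z$ with $f(z)=f(q)$ and pushes forward then pulls back), and your explicit handling of the case where a boundary point is itself critical is a detail the paper leaves implicit.
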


\dem  Let us assume   that $h$ is a $C^r$ diffeomorphism in an open set $V$.
It follows from Lemma \ref{zoom3} that there is an $n\in\NN$ and $c\in C_{f}$ such that $f^{n}|_{V}$ is a diffeomorphism $C^{r}$ and $c\in \inter(f^n(V))$.
Hence, by Lemma \ref{ren0} (i), $h$ is a $C^r$ diffeomorphism   in $f^n(V)$.
 Hence, $J^s(c)\supset f^{n}(V)$ is   a non-empty closed interval and $c \in C_f^r$.

Let us denote $J^s$ by $J$. Let us denote by $p$ and $q$ the boundary points of $J(c)$. Let us
prove that the interval $J(c)$ is dynamically symmetric, i.e. either
$f(p)=f(q)$, or $f(p)$ and $f(q)$ form the boundary of $f(J(c))$. Let
us suppose, by contradiction, that  there is $z \in \inter J(c)$ that is not a critical point such that
$f(z)=f(q)$ (or, similarly, $f(z)=f(q)$). Let $V_z$ and $V_q$ be
small neighborhoods of $z$ and $q$, respectively, such that $f|_{V_z}$
is a $C^r$ diffeomorphism  and  $f(V_q) \subset f(V_z)$.
Hence, by Lemma \ref{ren0} (i), $h$ is a $C^s$ diffeomorphism   in  $f(V_q) \subset f(V_z)$ and, again by Lemma \ref{ren0},
 $h$ is a $C^r$ diffeomorphism   in $V_q$. Hence, $h$ has a $C^r$ diffeomorphic extension to a
neighborhood of $q$ which is absurd.

By construction, if  $J(c_1) \cap J(c_2) \ne \emptyset$, for some $c_1,c_2 \in C_f$,
then $J(c_1) = J(c_2)$

 Let us prove
that the set $J$  is nice. Let us suppose, by
contradiction, that there is a point $p \in \partial J(c)$ and $n \ge 0$ such that $f^n(p) \in J$ and $f^m(p) \notin J$, for all $0 < m < n$.
Hence, there is a small neighborhood $V$ of $p$ such that
  $f^n(V) \subset J$. By Lemma \ref{ren0} (i) and (iii),  $h$ is a $C^s$ diffeomorphism   in $V$ which is absurd. The proof of case (ii) is
similar.
\cqd

 Given a nice set $J$, let $I(J)$ be the set of all points $x \in I$ whose forward orbit intersects $J$. Let
$D(J)$ be the set of all connected components $G$ of $I(J)$, i.e.
$$I(J)=\bigcup_{G \in D(J)}G.$$
The open intervals $G \in D(J)$ are called the \emph{gaps} of $I(J)$.
We note that the boundary $\partial I(J)$ of $I(J)$  is totally disconnected.

\begin{Lemma}[The basin of attraction of $J^s$]
\label{ren5}
Let $\emptyset \ne J^s \subset  \inter(I)$
For every $G \in D(J^s)$ with $G \cap J^s = \emptyset$, there is $n=n(G) \ge 1$ such that
\begin{enumerate}
\item  $f^n|G$ is a diffeomorphism;
\item
there is $c \in C_f^s $ such that $f^n(G)=J^s(c)$;
\item
 $f^j(G) \cap   J^s = \emptyset$, for every $0 \le j< n$.
\end{enumerate}
\end{Lemma}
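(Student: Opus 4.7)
The plan is to let $n = n(G) := \min\{k \ge 1 : f^k(G) \cap J^s \ne \emptyset\}$, which is finite because $G \subset I(J^s)$ forces every $y \in G$ to have a forward iterate in $J^s$; minimality of $n$, combined with the hypothesis $G \cap J^s = \emptyset$, immediately yields item~(iii).

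To begin item~(ii), I would show that $f^n(G)$ lies in a single component $J^s(c)$ of $J^s$. The set $f^n(G)$ is connected (as the continuous image of the connected set $G$) and meets $J^s$, so it meets some $J^s(c)$. If the containment $f^n(G) \subset J^s(c)$ failed, connectedness would yield a point $q \in f^n(G) \cap \partial J^s(c) \subset \partial J^s$, say $q = f^n(y)$ with $y \in G$. But $y \in I(J^s)$ together with $q \notin J^s$ would force some later iterate $f^{n+k}(y) = f^k(q)$ to land in $J^s$, contradicting the niceness of $J^s$ established in Lemma~\ref{ren1}. Hence $f^n(G) \subset J^s(c)$ for some $c \in C_f^s$.

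For item~(i), I argue by induction that $f^j(G) \cap C_f = \emptyset$ for $0 \le j \le n-1$: if $c' \in f^j(G) \cap C_f$, then a small open interval $V \subset f^j(G)$ around $c'$ satisfies $f^{n-j}(V) \subset f^n(G) \subset J^s$, and the fourth item of Lemma~\ref{ren0} forces $c' \in C_f^s \subset J^s$, contradicting $f^j(G) \cap J^s = \emptyset$ from item~(iii). Consequently $f^n|_G$ is a diffeomorphism onto an open subinterval of $J^s(c)$. To upgrade to $f^n(G) = J^s(c)$, observe that $I(J^s) = \bigcup_{k \ge 0} f^{-k}(J^s)$ is open, so $G$ is open and $\partial G \cap I(J^s) = \emptyset$. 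If $f^n(G) \subsetneq J^s(c)$, an endpoint $q \in \partial f^n(G) \cap J^s(c)$ would arise as $q = \lim f^n(y_k)$ with $y_k \in G$; by compactness $y_k \to y \in \overline{G}$ with $f^n(y) = q \in J^s$, so $y \in I(J^s)$. Since $q \notin f^n(G)$ we must have $y \notin G$, hence $y \in \partial G \cap I(J^s) = \emptyset$, a contradiction. The most delicate point is the interplay between niceness and boundaries: niceness of $J^s$ prevents $f^n(G)$ from spilling across $\partial J^s(c)$, while the component structure of $G$ prevents $f^n(G)$ from stopping short inside $J^s(c)$.
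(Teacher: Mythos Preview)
Your proof is correct and follows essentially the same approach as the paper: both arguments rest on the niceness of $J^s$ from Lemma~\ref{ren1}, the critical-point criterion in Lemma~\ref{ren0} to rule out turning points along the orbit, and a boundary argument to conclude $f^n(G)=J^s(c)$. The only difference is organizational---the paper defines the first hitting time $n(x)$ pointwise and proves it is locally constant on $I(J^s)\setminus J^s$, whereas you define $n$ directly for the gap $G$ and argue with $\partial G\cap I(J^s)=\emptyset$; your packaging is slightly cleaner but the substance is the same.
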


\dem
For every $x\in I(J)\setminus J$, let $n(x) > 1$ be  such that $f^n(x)\in J$ and
$f^j(x) \notin J$ for every $0\le j<n$. Let $E=\{x,\ldots,f^{n-1}(x)\}$.
By Lemma \ref{ren1}, $E \cap C_f = \emptyset$ and so there is a small open set $V$ such that $f^n|V$ is a $C^r$ diffeomorphism
and $f^n(V) \subset J$.
Let us prove by contradiction that there is a small open interval $W \subset V$ containing $x$ such that $n(y)=n(x)$ for every $y \in W$.
If there is not a small open interval $W \subset V$ containing $x$ such that $n(y)=n(x)$, for every $y \in W$,
then there is a sequence of points $x_n \in V$ converging to $x$ with $n(x_n)=j <n(x)$.
 Hence, $f^j(x) \in \partial J$. Since $J$ is nice $f^{n-j}(f^j(x))  \cap J = \emptyset$ which is a contradiction.
Let $V=(x,a)$ be the maximal open interval containing $x$ such that $n(y)=n(x)$ for every $y \in V$.
Let us prove, by contradiction, that $f^n(a) \in  \partial J$.
By the above argument, If  $f^n(a) \in  J$ then there is an open interval $W_a$ such that $n(y) = n(a)$ for every $y \in W_a$
which is absurd by maximality of $V$. Hence, for every $x \in I(J)\setminus J$, there is a maximal open interval $G$
such that $n(y) = n(x)$, for every $y \in G$, and $f^n(G) \subset \partial J$. Hence, $f^n|G$ is a  $C^r$ diffeomorphism and
$f^n(G) = J(c)$ for some $c$.
\cqd

\begin{Lemma} [$J^s_R$ is a renormalization domain]
\label{ren6}
Let $\emptyset \ne J^s \subset  \inter(I)$.
For every $c \in C_f^s$,
there is $n(c)$  and  $c'(c) \in C_f^s$ with the following properties:
\begin{enumerate}
\item
 $f^{n(c)}(J^s(c)) \subset \overline{ (J^s(c'(c)))}$;
 \item
 $\partial f^{n(c)}(J^s(c)) \subset \partial J^s(c'(c))$;
 \item
$f^{i}(J^s(c))\cap J^s=\emptyset$, for every $1\le i <n(c)$;
 \item
$J^s_R$  is a renormalization domain;
  \item
  $\cb(J^s_R) \subseteq I(J^s)$ and $\overline{\cb(J^s_R)} = \overline{I(J^s)}$;
\end{enumerate}
\end{Lemma}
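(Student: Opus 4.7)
The plan is to proceed in four stages, establishing successively the first-return structure, the three boundary properties, the renormalization-interval property for recurrent critical points, and the basin identification.

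\smallskip
\emph{Stage 1: return times and coupling.} For each $c\in C_f^s$ set
\[
n(c)=\min\{i\ge 1:\,f^{i}(J^s(c))\cap J^s\ne\emptyset\}.
\]
The interval $f^{n(c)-1}(J^s(c))$ lies in a single connected component $G$ of $I\setminus J^s$, and the minimality of $n(c)$ forces $G\in D(J^s)$ to be a gap with transit time $n(G)=1$; Lemma~\ref{ren5} then gives $f(G)=J^s(c'(c))$ for a unique $c'(c)\in C_f^s$. Finiteness of $n(c)$ itself follows from the fact that the orbit of $f(J^s(c))$ cannot persistently avoid $I(J^s)$ under the hypothesis $J^s\subset\inter(I)$, together with the absence of periodic attractors and neutral points that would be needed to sustain a wandering interval.

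\smallskip
\emph{Stage 2: properties (1)--(3).} Property (3) is immediate from the minimality of $n(c)$. For (1), the identity $f(G)=J^s(c'(c))$ from Stage 1 yields $f^{n(c)}(J^s(c))\subset f(G)=J^s(c'(c))\subset\overline{J^s(c'(c))}$. For (2), by the dynamical symmetry of $J^s(c)$ (Lemma~\ref{ren1}), the image interval has boundary $\{f^{n(c)}(p),\,f^{n(c)}(c)\}$ with $p\in\partial J^s(c)$ and $f(p)=f(q)$; the nice property of $J^s$ places $f^{n(c)}(p)\in I\setminus J^s$, hence in $\partial J^s(c'(c))$ by (1); if $f^{n(c)}(c)$ lay in the interior of $J^s(c'(c))$, then pulling back a slightly larger open neighborhood through the non-critical branches of $f^{n(c)}$ via Lemma~\ref{ren0}(ii) would extend the $C^s$-smoothness interval around $c$ strictly beyond $J^s(c)$, contradicting its maximality.

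\smallskip
\emph{Stage 3: renormalization structure (4).} Consider the coupling $\Phi:C_f^s\to C_f^s$, $\Phi(c)=c'(c)$; acting on a finite set, every $\Phi$-orbit is eventually periodic. For $c\in\CR^s$, the recurrence $J^s(c)\cap f^n(J^s(c))\ne\emptyset$ combined with (1)--(3) forces $c$ to lie on a $\Phi$-cycle of some period $k$; setting $N=\sum_{i=0}^{k-1}n(\Phi^{i}(c))$, iterated application of (1) gives $f^N(\overline{J^s(c)})\subseteq\overline{J^s(c)}$ with $f^N|_{\overline{J^s(c)}}$ multimodal as a composition of the multimodal restrictions $f^{n(\Phi^{i}(c))}|_{\overline{J^s(\Phi^{i}(c))}}$. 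Thus each $J^s(c)$ with $c\in\CR^s$ is a renormalization interval and $J^s_R$ is a renormalization domain.

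\smallskip
\emph{Stage 4: basin (5), and main obstacle.} Since $J^s_R\subseteq J^s$ we have $\cb(J^s_R)\subseteq I(J^s)$. Conversely, any $x\in I(J^s)$ whose orbit enters some $J^s(c_0)$ with $c_0\in C_f^s$ has (by Stage 2 applied iteratively) its orbit passing successively through $J^s(\Phi(c_0)),J^s(\Phi^2(c_0)),\ldots$ until it lands in $J^s(c^*)$ with $c^*\in\CR^s$, so $x\in\cb(J^s_R)$; the exceptional orbits that land on $\partial J^s$ at some intermediate step form a countable set, hence are negligible after closure, giving $\overline{\cb(J^s_R)}=\overline{I(J^s)}$. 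The most delicate step throughout is Stage 2(2), namely ruling out that $f^{n(c)}(c)$ sits in the interior of $J^s(c'(c))$: the pullback argument here must exploit the one-sided nature of $f^{n(c)}$ at $c$ (a consequence of the dynamical symmetry of $J^s(c)$), since $f^{n(c)}$ has its only critical point inside $J^s(c)$ at $c$ itself.
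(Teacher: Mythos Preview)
There is a genuine gap in Stage~1. You assert that $f^{n(c)-1}(J^s(c))$ lies in a single connected component $G$ of $I\setminus J^s$ and then treat $G$ as an element of $D(J^s)$ so that Lemma~\ref{ren5} applies. But a connected component of $I\setminus J^s$ is \emph{not} the same thing as a gap of $I(J^s)$: the former can contain points of $I\setminus I(J^s)$ together with several distinct gaps, separated by points of $\partial I(J^s)$. Nothing you have written rules out $\inter f^{i}(J^s(c))\cap\partial I(J^s)\ne\emptyset$ for some $1\le i<n(c)$; when this happens $f^{n(c)-1}(J^s(c))$ straddles several gaps, a~priori with different transit times and different targets $J^s(c')$, and Lemma~\ref{ren5} cannot be invoked on the whole interval. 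Your identification $f(G)=J^s(c'(c))$ then collapses.

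This is precisely the obstruction the paper's proof is built around. The paper does not jump to the first return time; it analyses one iterate and splits into Case~(A), where $\inter f(J^s(c))\cap\partial I(J^s)=\emptyset$ so that $f(J^s(c))$ sits inside a single $\overline{J^s(c')}$ or a single gap, and Case~(B), where $\inter f(J^s(c))\cap\partial I(J^s)\ne\emptyset$. Case~(B) is where the real content lies: using Lemma~\ref{ren0} and the maximality of $J^s$, the paper shows that every point of $\partial I(J^s)$ lying inside $\inter f(J^s(c))$ is a pre-critical point (hence there are only finitely many), and that the gaps on either side of each such point have the \emph{same} target $J^s(c')$ and the \emph{same} transit time. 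Only after this synchronisation can one conclude (i)--(iii). Your Stage~1 simply omits Case~(B). A secondary issue: your Stage~2(2) claim that $\partial f^{n(c)}(J^s(c))=\{f^{n(c)}(p),\,f^{n(c)}(c)\}$ is not justified either, since dynamical symmetry controls only the first iterate and $J^s(c)$ may contain several critical points; the paper instead obtains the boundary inclusion directly from niceness once (i) is in hand.
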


 \dem
 By Lemma \ref{ren5}, for every gap $G \in D(J)$ there are
 $n=n(G) \ge 1$ and  $c(G) \in C_f\cap J$ such that $f^n(G)=J(c(G))$,  $f^n|G$ is a
 $C^r$ diffeomorphism and $f^n(G) \cap J = \emptyset$ for every $o \le i < n$.

 For every $c \in C_f$, either (A) $\inter f(J(c)) \cap   \partial I(J) =
 \emptyset$; or (B) $\inter f(J) \cap   \partial I(J) \ne \emptyset$

 Case (A). Since  $\inter f(J(c)) \cap   \partial I(J) =
 \emptyset$, there is  an open interval $K$, that is either a) an interval
 $J(c')$ or b)  a gap $G$,  such that $f(J(c)) \subset \overline{K}$.
 In case a), this lemma follows from noting that $J$ is nice, and so $\partial  f(J(c)) \subset \partial J(c')$. In case b),
 there is $n=n(G) \ge 1$ such that $f^{n+1} J(c) \subset J(c(G))$ and    $f^{i+1} J(c) \cap J = \emptyset$ for every $0 \le i < n$.
Furthermore, since $J$ is nice, $f^{n+1}( \partial J(c)) \subset \partial J(c(G))$  that proves this lemma in case b).

 Case (B).
 Let us suppose that there is a point $x \in
  \partial I(J) \cap \inter f(J(c))$.
 Let $V$ be a small neighborhood contained in $J(c)$
 such that $f|V$ is a $C^r$ diffeomorphism and $x$ is contained in the interior of
 $f(V)$. Since  $\partial I(J)$ is a totally disconnected set, there
 are  gaps $G_y$ and $G_y'$ with a boundary point $y \in f(V)$. Let $z \in V$ be
 such that $f(z)=y$ and
 take a smaller neighborhood $V_0 \subset V$ of $z$ such
 that $f(V_0\setminus  \{z\}) \subset G_y \cup G_y'$.
By Lemma \ref{ren0},  if there is $$w \in f^{n(G_y)+1} (V_0\setminus  \{z\}) \cap \partial J(c(G_y)),$$ then
there is an open interval $W \subset f^{n(G_y)+1} (V_0\setminus  \{z\})$ containing $w$ such that $h|W$ is a $C^s$ diffeomorphism.
Since $w \in \partial J(c(G_y))$, we obtain a contradiction.
Hence, for some $0 \le i < n(G_y)$, there is a critical point $c_y\in C_f$ such that $c_y=f^i(y)$.
Therefore, $J(c(G_y))=J(c(G_y'))$ and $n(G_y)=n(G_y')$.
Since the set of critical points is finite, $\partial I(J) \cap \inter f(J(c_0))$ is also finite
and for every $w \in \partial I(J) \cap \inter f(J(c_0))$, there are gaps  $G_w$ and $G_w'$ with $w \in \partial  G_w \cap  \partial G_w'$
such that
$$J(c(G_w))=J(c(G_w'))= J(c(G_y)) ~~~{\rm  and} ~~~n(G_w)=n(G_w')=n(G_y).$$
Furthermore, since $J$ is nice, $f^{n(G_y)}( \partial J(c)) \subset \partial J(c(G_y))$, that proves this lemma in case (B).

Hence, Lemma \ref{ren6} (i) and (ii) hold. Therefore,  $J^s_R$  is a renormalization domain.
Lemma \ref{ren6} (i) and (ii) also imply  for  every gap $G \subset I(J^s)$ there is a gap $G' \subset \cb(J^s_R)$ such that
  $G\setminus G'$ is either (i) empty or (ii) it is a finite set of points $S_G=G\setminus G'$ with the following properties:
  for every $x \in S_G$ there is $i=i(x)$ and $j=j(x)$ such that (i) $0 \le i < j$, (ii) $f^i(x) \in C_f^s$,
 (iii) $f^i(x) \notin J^s_R$, and (iv) $f^j(x) \in \partial J^s_R$. Hence,  Lemma \ref{ren6} (iv) holds.
 \cqd

\begin{theorem}[Explosion of smoothness]
\label{theoremA}
Let $f$ and $g$ be $C^r$ multimodal maps with $r \ge 3$ and no periodic attractors  and no  neutral periodic points.
Let $h$ be a topological conjugacy  between  $f$ and $g$  preserving the order of the critical points.
If $h$ is $C^{1}$ at a point $p \in \NE(f)$, then either
\begin{enumerate}
\item
$h$ is a $C^{s}$ diffeomorphism  in the full interval $I$ or in its interior $\inter(I)$; or
\item
there is a unique maximal renormalization domain  $J \subseteq I$
such that $h$ is a $C^{s}$ diffeomorphism in $J$.
Furthermore,
\begin{enumerate}
 \item $h$ is a $C^{s}$ diffeomorphism
 in  the basin  of attraction  $\cb(J)$;
 \item  $h$ is not  $C^{s}$ at any open interval contained in $I \setminus  \overline{\cb (J)}$;
\item  $h$ is not $C^{1}$ at any point in $E(f) \cap \partial  \cb(J)$.
 \end{enumerate}
  \end{enumerate}
\end{theorem}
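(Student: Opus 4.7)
The plan is to reduce the theorem to the structural analysis of Section~\ref{RRIIII} after bootstrapping the pointwise hypothesis at $p$ to genuine $C^r$ smoothness on an open set. First I would apply Lemma~\ref{zoom7}: since $h$ is $C^1$ at $p\in\NE(f)$, there is an open interval $V_0$ on which $h$ is a $C^r$ diffeomorphism. By Lemma~\ref{ren1}, $J^s$ is then a nonempty nice set containing $\inter(f^n(V_0))$ for some $n$; by Lemma~\ref{ren6}, $J:=J^s_R$ is a renormalization domain with $\overline{\cb(J)}=\overline{I(J^s)}$. Uniqueness and maximality of $J$ are immediate from the construction, since any renormalization domain on which $h$ is $C^s$ consists of recurrent $C_f^s$-points and is therefore contained in $J^s_R$.

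For part (a), I would spread the smoothness of $h|_{J^s}$ to the full basin via the conjugacy equation. Given $x\in\cb(J)$, take the smallest $n$ with $f^n(x)\in J$ and a neighborhood $W$ of $x$ on which $f^n|_W$ is monotone. If the orbit segment $x,\dots,f^{n-1}(x)$ avoids $C_f$, then $f^n|_W$ is a $C^r$ diffeomorphism and $h|_W=(g^n|_{h(W)})^{-1}\circ h\circ f^n|_W$ is $C^s$. If some $f^i(x)=c$ is critical, Lemma~\ref{ren0}(iv) forces $c\in C_f^s$, so $c\in J^s(c)\subset J$, and the critical-point computation in Lemma~\ref{ren0} again yields $C^s$ regularity of $h$ near $x$. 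The dichotomy between cases (1) and (2) is then whether $\cb(J)$ exhausts $\inter(I)$ or not, with boundary behaviour on $\partial I$ handled by the fact that boundary points are periodic and lie in or on the closure of $\cb(J)$.

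Part (b) will follow by contradiction: a $C^s$ interval $U\subset I\setminus\overline{\cb(J)}$ would, under forward iteration and Lemma~\ref{ren0}(i) and~(iv), force a critical point met by some $f^n(U)$ into $C_f^s$; by forward invariance the recurrence class of that point lies in $J^s_R=J$, and so $U\cap\cb(J)\ne\emptyset$, a contradiction.

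The main obstacle is part (c). Suppose for contradiction that $p\in E(f)\cap\partial\cb(J)$ and $h$ is $C^1$ at $p$. Since $p$ is expanding, Lemma~\ref{zoom6} supplies a \emph{central} $1$-zooming pair $(p,V)$, and Lemmas~\ref{esp3} and~\ref{esp4} then produce an open interval $V_p\ni p$ on which $h$ is $C^r$. Thus $p\in J^s$, and some maximal interval $J^s(c)$ has $p$ in its closure. Because $J^s$ is nice and $p\in\partial I(J^s)$, no forward iterate of $p$ can enter $J^s$; yet if $p\in\inter J^s(c)$ then $p\in\inter I(J^s)\subset\cb(J)$, contradicting $p\in\partial\cb(J)$. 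The delicate remaining case is $p\in\partial J^s(c)$: here the $C^r$ interval $V_p$ around $p$, combined with the dynamical symmetry of $J^s(c)$ (Lemma~\ref{ren1}(i)) and with Lemma~\ref{ren0}(iii) applied to pull back $V_p$, would extend $J^s(c)$ strictly past~$p$, contradicting the maximality in the definition of $J^s(c)$. Carefully combining niceness, dynamical symmetry, and the expansion at $p$ to close this boundary case is the technical crux of the argument.
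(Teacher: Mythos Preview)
Your overall architecture matches the paper's proof: Lemma~\ref{zoom7} to pass from $C^1$ at $p$ to $C^r$ on an open set, then Lemmas~\ref{ren1} and~\ref{ren6} to build the nice set $J^s$ and the renormalization domain $J=J^s_R$, with the dichotomy and parts~(a),~(b) read off from that structure. The paper is more compressed---it cites Lemma~\ref{ren6} directly for (a) and (b) rather than re-deriving them via Lemma~\ref{ren0} and~\ref{ren5} as you do---but the content is the same.

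The real divergence is part~(c), which you call ``the technical crux'' and attack with a case split on whether $p\in\inter J^s(c)$ or $p\in\partial J^s(c)$. The paper disposes of (c) in one line: if $h$ is $C^1$ at $x\in E(f)\cap\partial\cb(J)$, then (via the central zooming pair in Lemma~\ref{zoom6} and Lemmas~\ref{esp3},~\ref{esp4}) $h$ is $C^s$ on an open interval $W\ni x$, and this already contradicts the maximality established in Lemmas~\ref{ren1}--\ref{ren6}, since any such $W$ must lie in $I(J^s)$ while $x\in\partial\cb(J)$ and $\overline{\cb(J)}=\overline{I(J^s)}$. Your more elaborate analysis is not needed, and it contains a genuine misstep: the assertion ``Thus $p\in J^s$'' does not follow. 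Smoothness of $h$ on $V_p\ni p$ only yields $V_p\subset I(J^s)$ (after iterating forward into a critical neighbourhood); it does not place $p$ itself in $J^s=\bigcup_{c\in C_f^s}J^s(c)$, which by definition consists of intervals around critical points. Your subsequent dichotomy $p\in\inter J^s(c)$ versus $p\in\partial J^s(c)$ is therefore addressing a situation that need not occur, and the ``delicate remaining case'' you describe is an artefact of this wrong localisation. The correct contradiction is the simpler one the paper uses: an open $C^s$-interval around a point of $\partial\cb(J)$ is incompatible with the already-established description of where $h$ can be $C^s$.
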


\dem
By Lemma \ref{zoom7}, there is an open interval $W$ such that $h|W$ is $C^s$ and so the  $s$-smooth conjugacy maximal domain $J^s \ne \emptyset$.
If $h$ is not a $C^{s}$ diffeomorphism  in $I$ or $\inter(I)$, then, by Lemma \ref{ren6},  there is
   a renormalization domain $J^s_R$ such that (i) $h|\cb(J^s_R)$ is a  $C^s$ diffeomorphism and (ii)
   there is no open interval $V \subset I \setminus \overline{\cb(J^s_R)} = I \setminus \overline{I(J^s)}$
   such that  $h|$ is a  $C^s$ diffeomorphism.
    Let us prove, by contradiction, that  $h$ is not $C^{1}$ at any point in $E(f) \cap \partial  \cb(J)$.
 By Lemma \ref{zoom7}, if $h$ is  $C^{1}$ at some  point $x \in E(f) \cap \partial  \cb(J)$ then
 there is an open interval $W$ containing $x$ such that $h|W$ is $C^s$ which is a contradiction.
   \cqd

 Theorem \ref{theoremB} below gives  a criterium for non-smoothness of the conjugacy when the conjugacy does not preserve the order of the critical points.
 The \emph{non-critical forward orbit} ${\mathcal O}^+_{nc} (p)$ of $p$ is the set of all points $q$ such that there is $n=n(q) \ge 0$ with the property that $f^n(p)=q$ and
$(f^n)'(p)\ne 0$. The \emph{non-critical omega limit set} $\omega_{nc} (p)$ of $p$ is the set of all accumulation points of  ${\mathcal O}^+_{nc} (p)$.

\begin{theorem} [Implosion of non smoothness]
\label{theoremB}
Let $f$ and $g$ be $C^r$ multimodal maps with $r \ge 3$ and no periodic attractors  and no neutral periodic points.
Let $h$ be a topological conjugacy,  between  $f$ and $g$,  not preserving the order of the critical points $c_f$ and $c_g=h(c_f)$.
The conjugacy $h$ is not $C^{1}$  simultaneously at (i) a  point  belonging to  $E(f) \cap \alpha_{nc} (c_f)$ and a point belonging to  $E(f) \cap \omega_{nc} (c_f)$.
\end{theorem}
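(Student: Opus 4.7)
I will argue by contradiction. Assume $h$ is $C^{1}$ at some $p\in E(f)\cap\alpha_{nc}(c_f)$ \emph{and} at some $q\in E(f)\cap\omega_{nc}(c_f)$, but $\alpha:=\ord_f(c_f)\ne\ord_g(c_g)=:\beta$ with $c_g=h(c_f)$. My goal is to propagate the smoothness of $h$ from $p$ down to $c_f$ and from $q$ back to $f(c_f)$, and then use the non-flat form at $c_f,c_g$ to force $\alpha=\beta$.

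\textbf{Step 1 (Localization near $p$ and $q$).} Since $p,q\in E(f)$, Lemma~\ref{zoom6} gives central $1$-zooming pairs $(p,V)$ and $(q,V')$. Because $h$ is uaa at $p$ and at $q$ (Lemma~\ref{esp1}), Lemma~\ref{esp3} produces open intervals $V_p\ni p$ and $V_q\ni q$ such that $h|V_p$ and $h|V_q$ are $C^{1+\gamma}$ diffeomorphisms for some $\gamma>0$ (so in particular $C^{1}$ with nonzero derivative everywhere on these intervals).

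\textbf{Step 2 (Transport to $c_f$ and $f(c_f)$).} Since $p\in\alpha_{nc}(c_f)$, choose $y_k\in V_p\cap {\mathcal O}^-_{nc}(c_f)$ with $f^{n_k}(y_k)=c_f$ and $(f^{n_k})'(y_k)\ne0$; then the orbit $y_k,f(y_k),\ldots,f^{n_k-1}(y_k)$ avoids $C_f$. Shrinking, $f^{n_k}:U_k\to W_k$ is a $C^r$ diffeomorphism with $U_k\subset V_p$ and $c_f\in W_k$. Since $h$ is a topological conjugacy, it maps turning points to turning points; hence $h(y_k),\ldots,g^{n_k-1}(h(y_k))$ avoids $C_g$ and $g^{n_k}:h(U_k)\to h(W_k)$ is a $C^r$ diffeomorphism onto a neighborhood of $c_g$. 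From $h\circ f^{n_k}=g^{n_k}\circ h$,
\[
h|W_k \;=\; g^{n_k}|_{h(U_k)}\circ h|_{U_k}\circ (f^{n_k}|_{U_k})^{-1},
\]
so $h$ is a $C^{1+\gamma}$ diffeomorphism on the neighborhood $W_k$ of $c_f$. An entirely analogous argument with $z_m=f^{N_m}(c_f)\to q$ chosen in the non-critical forward orbit of $f(c_f)$ (so that $(f^{N_m-1})'(f(c_f))\ne0$) produces a neighborhood $U'$ of $f(c_f)$ on which $h$ is a $C^{1+\gamma}$ diffeomorphism.

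\textbf{Step 3 (Order comparison and contradiction).} Set $a:=h'(c_f)\ne0$ and $b:=h'(f(c_f))\ne0$. Using the non-flat forms $f(c_f+x)=f(c_f)+\phi(|x|^\alpha)$ and $g(c_g+y)=g(c_g)+\psi(|y|^\beta)$ and evaluating $h\circ f=g\circ h$ at $c_f+x$ for $x\to 0^+$,
\[
h(f(c_f))+b\,\phi'(0)\,x^{\alpha}+o(x^{\alpha})
\;=\;
g(c_g)+\psi'(0)\,|a|^{\beta}\,x^{\beta}+o(x^{\beta}).
\]
Since $h(f(c_f))=g(c_g)$ and $b\phi'(0),\psi'(0)|a|^\beta\ne0$, matching the dominant power of $x$ forces $\alpha=\beta$; the case $x\to 0^-$ is identical. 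This contradicts $\ord_f(c_f)\ne\ord_g(c_g)$.

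\textbf{Main obstacle.} Steps 1--2 are essentially bookkeeping with the machinery already developed (Lemmas~\ref{zoom6}, \ref{esp3}); the only real care is checking that the orbits used in Step~2 are genuinely non-critical under both $f$ and $g$, so that the chain-rule products are nonzero and the compositions are honest local diffeomorphisms. The heart of the argument is Step~3, the asymptotic expansion at the critical point: one must treat the two sides of $c_f$ separately and account for the sign of $h'(c_f)$ and the direction of the unimodal ``fold'' of both $f$ and $g$, but in every case the leading power on each side is $|x|^\alpha$ versus $|x|^\beta$ times nonzero constants, which yields $\alpha=\beta$.
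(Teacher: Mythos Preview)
Your argument is correct and follows essentially the same route as the paper's proof: propagate smoothness from the expanding points to neighborhoods of $c_f$ and of $f(c_f)$, then derive the contradiction from $h\circ f=g\circ h$ at the turning point. The only cosmetic differences are that the paper invokes Lemma~\ref{zoom7} directly (upgrading to $C^r$ via Lemma~\ref{esp4}) rather than stopping at $C^{1+\gamma}$ as you do, and the paper compresses your Step~3 into the single sentence ``$h$ cannot be $C^1$ at $c_f$ and $f(c_f)$ simultaneously'' without writing out the asymptotic expansion; your explicit computation is a welcome elaboration of that line.
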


If $f$ is a Collet-Eckmann map with negative Schwarzian derivative, then  $E(f) \cap \omega_{nc} (c_f) \ne \emptyset$ and $\alpha_{nc} (c_f)$ contains the Milnor's attractor cycle.

\dem 
Let us prove, by contradiction, that $h$ is not $C^{1}$  at any  point  belonging to  $E(f) \cap \alpha (c_f)$.
If $h$ is   $C^{1}$  at a   point   $x \in E(f) \cap \alpha_{nc} (c_f)$ then,
by Lemma \ref{zoom7}, there is an open interval $V_1$ containing $x$ such that $h|V_1$ is $C^r$.
Since  $x \in  \alpha_{nc} (c_f)$, there is an integer $n$ such that
$c \in int (f^n(V_1))$.
Hence, by Lemma \ref{ren0}, $h$ is a $C^{r}$ diffeomorphism in an open set $V_c$ containing $c$.

If $h$ is   $C^{1}$  at a   point   $x \in E(f) \cap \omega_{nc} (c_f)$ then,
by Lemma \ref{zoom7}, there is an open interval $W_1$ containing $x$ such that $h|W_1$ is $C^r$.
Since  $x \in  \omega_{nc} (c_f)$, there is an open set $W_{f(c)}$ containing $f(c)$ and an integer $n$ such that
$f^n(W_{f(c)}) \subset W_1$ and $f^n|W_{f(c)}$ is a $C^{r}$ diffeomorphism.
Hence, by Lemma \ref{ren0}, $h|W_{f(c)}$ is a $C^{r}$ diffeomorphism.

Since $h$ does not preserve the order of the critical points $c_f$ and $c_g=h(c_f)$, $h$ can not be $C^1$ at $c_f$ and $f(c_f)$ simultaneously which is an absurd.
   \cqd

   \section{$C^r$ smoothness of the conjugacy}

In this section, we prove   Theorem \ref{theoremAAA}.

\label{RRIIII3}

\begin{Lemma} [$K(c') \subseteq J^s_R$ is a renormalization interval]
\label{LemmaA}
Let $h$ be a $C^r$ diffeomorphism in an open set $V_1$.
There is a  maximal renormalization  interval  $K(c') \subseteq J^s_R$
and a puncture set $P(c') \subset K(c')$
such that
\begin{enumerate}
\item $h$ is a $C^r$ diffeomorphism in $K(c') \setminus P(c')$, and
\item $int (V_1 \cap \cb(K(c'))) \ne \emptyset$.
\end{enumerate}
Furthermore, $\partial K(c') \subset E(f)$ and $h$ is not $C^1$ at the boundary $\partial K(c')$ points.
\end{Lemma}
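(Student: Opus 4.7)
The plan is to use $V_1$ as a seed of $C^r$-smoothness, propagate it through the dynamics using Lemma \ref{ren0}, and then locate the appropriate renormalization interval inside the $s$-renormalization domain constructed in Lemma \ref{ren6}.  Since $h|V_1$ is $C^r$ and so $C^s$, Lemma \ref{ren1} gives that the $s$-smooth conjugacy maximal domain $J^s$ is a non-empty nice set, and Lemma \ref{ren6} gives that $J^s_R$ is a renormalization domain on whose basin $\cb(J^s_R)$ the conjugacy $h$ is a $C^s$ diffeomorphism.  Moreover, Lemma \ref{zoom3} applied to $V_1$ produces an integer $n_1 \ge 0$ and a critical point $c \in C_f$ with $c \in \inter(f^{n_1}(V_1))$; by Lemma \ref{ren0}(v), $c \in C_f^r$.

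Next, I would iterate $c$ through the renormalization tree.  Lemma \ref{ren6}(i,ii) assigns to every $\tilde c \in C_f^s$ some $\hat c \in C_f^s$ with $f^{n(\tilde c)}(J^s(\tilde c)) \subset \overline{J^s(\hat c)}$; because $C_f$ is finite, iteration of $\tilde c \mapsto \hat c$ starting at $c$ eventually enters a cycle, and any $c'$ in that cycle belongs to $\CR^s$.  Setting $K(c') := J^s(c')$, Lemma \ref{ren6}(iv) gives that $K(c')$ is a renormalization interval contained in $J^s_R$, and the construction ensures that some forward iterate of a neighborhood of $c$ lies in $K(c')$, whence $\inter(V_1 \cap \cb(K(c'))) \ne \emptyset$.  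The puncture set $P(c')$ is then taken as in the paper's definition for $K(c')$: the union of non-critical $F$-backward orbits, where $F$ is the renormalized map on $K(c')$, of those critical points whose $F$-non-critical alpha limit sets miss $\inter K(c')$.

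The heart of the proof is showing that $h|_{K(c') \setminus P(c')}$ is $C^r$: every $y \in K(c') \setminus P(c')$ has a backward $F$-orbit accumulating at critical points whose dynamics remains in $K(c')$, so one can connect $y$ via diffeomorphic branches of $F^{-k}$, using Lemma \ref{ren0}(i,ii), to the open set of $C^r$-smoothness originating from $V_1$, without ever needing to apply the $s$-losing pullback rule Lemma \ref{ren0}(iii) at an escaping critical point.  Finally, $\partial K(c') = \partial J^s(c')$ consists of periodic repellors (as $f$ has no periodic attractors nor neutral points), so $\partial K(c') \subset E(f)$; and if $h$ were $C^1$ at some $p \in \partial K(c')$ then Lemma \ref{zoom7} would produce $C^r$-smoothness on a neighborhood of $p$, yielding an $s$-smooth open set strictly enlarging $J^s(c')$ and contradicting its maximality.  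The same argument forces $K(c')$ to be maximal as a renormalization interval inside $J^s_R$.  \emph{The main obstacle is the careful organization of the $C^r$-smoothness transport} through the branched orbit structure of $F$; the puncture set $P(c')$ is precisely the obstruction to carrying this out by non-critical-only pullbacks.
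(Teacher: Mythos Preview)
Your setup through step~5 is reasonable and close to the paper's: locating a recurrent critical point $c'\in\CR^s$ with $c'\in C_f^r$ and invoking Lemma~\ref{ren6} to sit inside a renormalization interval $J^s(c')\subseteq J^s_R$ is correct, and your handling of the ``Furthermore'' clause (boundary points are pre-periodic repellors, hence in $E(f)$; Lemma~\ref{zoom7} blocks $C^1$ there) is fine.

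The genuine gap is your identification $K(c')=J^s(c')$.  The interval $J^s(c')$ is only guaranteed to carry $C^s$ smoothness of $h$, and the renormalized map $F=f^l|_{J^s(c')}$ may well have critical points $c''\in J^s(c')$ at which $h$ is \emph{not} $C^r$.  If such a $c''$ has $\alpha_{nc}(c'')\cap\inter J^s(c')\ne\emptyset$, then by your (and the paper's) definition of the puncture set, $c''\notin P(c')$; yet $c''\in K(c')\setminus P(c')$ and $h$ fails to be $C^r$ there.  Your sketch in step~6 (``connect $y$ via diffeomorphic branches of $F^{-k}$ to the open set of $C^r$-smoothness originating from $V_1$'') tacitly assumes every backward $F$-orbit accumulates at the specific critical point $c'$ where the $C^r$ seed lives, but in a genuinely multimodal renormalization this need not happen: preimages of $c''$ may accumulate only at $c''$ itself or at other bad critical points, never reaching a neighbourhood of $c'$.

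The paper resolves this by a two-stage refinement that you are missing.  First it lets $C$ be the set of critical points of $F$ at which $h$ is \emph{not} $C^r$, removes their alpha-limits, and takes $H$ to be the connected component of $J^s(c')\setminus\alpha_{nc}(C)$ containing $c'$; one then argues (using Lemma~\ref{ren0} and forward invariance of $\alpha_{nc}(C)$) that $H$ is itself a renormalization interval for $F$, and that inside $H$ the backward orbits of any remaining bad critical points form a discrete set.  Second, it takes $H_1\subset H$ maximal with $h|_{H_1\setminus\mathcal{O}^-_{nc}(C_{H_1})}$ of class $C^r$, and shows via several short contradiction arguments that $H_1$ is again a renormalization interval.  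The output is $K(c')=H_1$ (which may be strictly smaller than $J^s(c')$) and $P(c')=\mathcal{O}^-_{nc}(C_{H_1})$.  Without this refinement your claim~(1) need not hold.
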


\dem
Using Lemma \ref{zoom3}, there is   a sequence of open sets $V_1,V_2,V_3,\ldots$
 such that (i)  $V_{i+1} \cap C_f \ne \emptyset$; (ii) $f^{n_i} (V_i) \supset  V_{i+1}$,
and  (iii) $|V_i| \to 0$.
Since $C_f$ is finite, (i) there is $c' \in  C_f \cap J$    and (ii) a subsequence $V_{n_1},V_{n_2},V_{n_3},\ldots$
such that $f^{m_i} (V_{n_i}) \supset  V_{n_{i+1}}$, where $m_i=\sum_{j=n_i}^{n_{i+1}-1} n_j$, and (iii) $c' \in V_{n_i}$ for every $i\ge 1$.
By Lemma \ref{ren0}, $h|\inter(f^{m_i} (V_{n_i}))$ is a $C^{r}$ diffeomorphism and
so $h|V_{n_{i+1}}$ is also a $C^{r}$ diffeomorphism.
By Lemma \ref{ren6}, there is a non-empty maximal renormalization interval $J=J^s(c') \subseteq J^s_R$
containing $V_{n_{i}}$ for all $i$.
Let $l$ be the smallest integer such that $F=f^l|J$ is a renormalization of $f$ restricted to $J$.

Let $C$ (possibly empty) be the set of all critical point $c \in C_F$ of $F|J$ such that there is no open interval $V_c \subset J$ with the  property
that $c \in V_c$ and  $h|V_c$ is  a $C^r$ diffeomorphism.
For every $c \in C$, let $\alpha_{nc} (c)$ be the non-critical alpha limit set of $c$ with respect to $F|J$.
Set $\alpha_{nc}(C)= \cup_{c \in C}  \alpha_{nc} (c)$.

 Let us prove that the open connected component $H$  of $J \setminus \alpha_{nc}(C)$ containing $c'$ is a renormalization interval for $F$.
 Let us start proving, by contradiction, that  $H$   is non-empty.
If $H= \emptyset$, there are
(i) $c_1 \in C$, (ii) an open interval $U \in V_{n_1}$ and (iii) an integer $l$ such that $c_1 \in \inter F^{l}(U)$.
By Lemma \ref{ren0},   $c_1 \in C_F^r$ which is absurd.
Take $i_0$ large enough such that, for every $i \ge i_0$,  $c' \in V_{n_i} \subset H$ and $c' \in V_{n_{i+1}} \subset H$. Since $f^{m_i}(V_{n_i}) \supset V_{n_{i+1}}$,
 there is $l_i$ such that  (i) $F^{l_i}(V_{n_i})=f^{m_i}(V_{n_i})$ and (ii)  $F^{l_i}(V_{n_i}) \cap H \ne \emptyset$.
  Since $\alpha_{nc}(C)$ is forward invariant,
$\partial F^{l_i}(H) \subset \alpha_{nc}(C)$. Let us prove, by contradiction, that (i) $\partial F^{l_i}(H) \subset \partial H$
and (ii) $F^{l_i}(H)  \subset \overline{H}$.
If  $F^{l_i}(H)  \not \subset \overline{H}$ then there is $x \in \partial H$ such that $x \in \inter(F^{l_i}(H))$.
Hence, by Lemma \ref{ren0}, $h$ is $C^r$ in an open set containing $x$ which is a contradiction.
Hence,  $F^{l_i}(H)  \subset \overline{H}$ and, by forward invariance of $\alpha_{nc}(C)$, $\partial F^{l_i}(H) \subset \partial H$.
 Thus, $H$ is a renormalization interval for $F$.
Take $k$  the smallest integer such that $F_1=F^{k}|H$ is a renormalization of $F$ restricted to $H$.

 For every open interval $H_1 \subset H$,
let $C_{H_1}$ be the set of all critical point $c \in {H_1}$ of $F_1|H$ such that there is no  open interval $V_c \subset H$ with the  property that $c \in V_c$ and  $h|V_c$ is  a $C^r$ diffeomorphism.
For every $c \in C_{H_1}$, let ${\mathcal O}^-_{nc} (c)$ be the non-critical backward orbit of $c$ with respect to $F_1|H$.
Set ${\mathcal O}^-_{nc} (C_{H_1}) = \cup_{c \in C_{H_1}} {\mathcal O}^-_{nc} (c)$.
 Since the accumulation set of ${\mathcal O}^-_{nc} (C_{H})$ is contained in $\alpha_{nc}(C)$, the set ${\mathcal O}^-_{nc} (C_{H_1})$ is a discrete set of $H$, for every open interval $H_1 \subset H$.

Now, let $H_1 \subset H$ be the maximal open set such that $h|H_1\setminus {\mathcal O}^-_{nc} (C_{H_1})$ is $C^r$.
Either (i) $H_1=H$, or (ii) $H_1 \ne H$ is non-empty.

\emph{Case (i).}  The interval $K(c')=H$ is the maximal interval of renormalization containing $c'$ and $P(c')={\mathcal O}^-_{nc} (C_H)$ is the punctured set of $K(c')$ with the property    that $h|K(c') \setminus P(c')$ is $C^r$. Furthermore, $int (V_1 \cap \cb(K(c'))) \ne \emptyset$.

\emph{Case (ii).} There is $i$ large enough such that $V_{n_i} \subset H_1$ and $F_1^l(V_{n_i}) \cap H_1 \ne \emptyset$.

Let us prove by contradiction that $\partial H_1 \cap {\mathcal O}^-_{nc} (C_H) = \emptyset$.
If $x \in \partial H_1 \cap {\mathcal O}^-_{nc} (C_H)$ take  the smallest $m$  such that $F_1^m(x) \in C_H$.
Let $a$ and $b$ be close enough to $x$ such that (i) either $(a,x)$ or $(x,b)$ is contained in $H_1$, (ii)  $F_1^{m+1}(a)=F_1^{m+1}(b)$, (iii)
$F_1^m|(a,b)$ ,  $F_1^{m+1}|(a,x)$ and $F_1^{m+1}|(x,b)$ are diffeomorphisms. Hence, $(a,b) \subset H_1$ that is a contradiction.

Let us prove  by contradiction  that if  $x \in \partial H_1$ then  $x$ is not contained in the pre-orbit of a critical point.
Take  the smallest $m$  such that  $F_1^m(x)=c$ is a critical point.  Since $c \notin {\mathcal O}^-_{nc} (C_H)$,
there is a small open set $W$ containing $c$ such that $h|W$ is a $C^r$ diffeomorphism. Furthermore,  there is a  small enough open set $V$  such that (i) $V$ contains $x$, (ii) $F_1^m|V$ is a diffeomorphism and (iii) $F_1^m(V) \subset W$. Thus, by Lemma \ref{ren0}, $h|V$ is also a $C^r$ diffeomorphism that  is a contradiction.

Let us prove  by contradiction  that $F_1(\partial H_1) \cap H_1 = \emptyset$.
If   $x  \in \partial H_1$ and $F_1(x) \in H_1$ then there  are small enough open sets $V$ and $W$ such that (i) $V$ contains $x$, (ii) $F_1|V$ is a diffeomorphism because $x$ is not a critical point of $F_1$, (iii) $F_1(V)=W$, (iv) $W \subset H_1$ and so (v) $h|W$ is a $C^r$ diffeomorphism. Hence $h|V$ is also a $C^r$ diffeomorphism that is a contradiction.

There is $i$ large enough such that  $V_{n_i} \subset H_1$ and $c' \in F_1^{k} (V_{n_i})$, for some $k$, and so $F_1^{k}(H_1) \cap H_1 \ne \emptyset$.
Hence, to prove that $H_1$ is a renormalization maximal interval it is enough to prove,
  by contradiction,  that $F_1(\partial H_1) \subset \partial H_1$.
  If   $x  \in \partial H_1$ and $F_1(x) \notin \partial H_1$ and so $F_1(x) \notin \overline{H_1}$,  then (i) there is $y \in H_1$ such that $F_1(y)=x$ and (ii) open intervals $V$ and $W$ with the following properties:  (i) $V$ contains $x$, (ii) $W \subset H_1$ contains $y$, (iii) $F_1|W$ is a diffeomorphism, (iv) $F_1^m(W)=V$. Since $h|W$ is a $C^r$ diffeomorphism, by Lemma~\ref{ren0}, we get that $h|V$ is also a $C^r$ diffeomorphism that is a contradiction.
  Therefore, $K(c')=H_1$ is a renormalization interval containing $c'$ and $P(c')={\mathcal O}^-_{nc} (C_{H_1})$ is the punctured set of $K(c')$  such that $h|K(c') \setminus P(c')$ is $C^r$.
    Furthermore, $int (V_1 \cap \cb(K(c'))) \ne \emptyset$.
\cqd

\dem[Proof of Theorem \ref{theoremAAA}]
By Lemma \ref{zoom7}, there is an open interval $V_1$ such that $h|V_1$ is $C^r$.
If $h$ is not a $C^{r}$ diffeomorphism  in $I \setminus P$, then, by Lemma \ref{LemmaA},  there is a
maximal renormalization  interval
   $K(c')$
and a punctured set $P(c') \subset K(c')$ such that $h$ is a $C^{r}$ diffeomorphism in $K(c') \setminus P(c')$.
By Lemma \ref{ren0}, $h$ is a $C^{r}$ diffeomorphism
 in  the punctured basin  of attraction $\cb_{P}(J(c'))$.

Let $C^r_f$ be the union of all critical points $c\in C_f$ such that $K(c) \ne \emptyset$  is a maximal renormalization  interval  and
  $P(c) \subset K(c)$ is  a punctured subset  such that $h$ is a $C^{r}$ diffeomorphism in $K(c) \setminus P(c)$.
Let $J=\cup_{c \in  C^r_f}  K(c)$ be the maximal renormalization domain and $P=\cup_{c \in  C^r_f}  P(c)$ the punctured set of $J$.
By Lemma \ref{ren0}, $h$ is a $C^{r}$ diffeomorphism
 in  the punctured basin  of attraction $\cb_{P}(J)=\cup_{c \in  C^r_f} \cb_{P}(J(c'))$.

  Let us prove, by contradiction,
$h$ is not a $C^{r}$ diffeomorphism at any open interval $V \subset I \setminus  \overline{\cb (J)}$.
If $h$ is  a $C^{r}$  diffeomorphism at $V$ then, by Lemma  \ref{LemmaA}, there is $c \in C^r_f$ such that
 $int (V \cap \cb(K(c))) \ne \emptyset$ which is a contradiction.

    Let us prove, by contradiction, that  $h$ is not $C^{1}$ at any point in $E(f) \cap \partial  \cb(J)$.
 By Lemma \ref{zoom7}, if $h$ is  $C^{1}$ at some  point $x \in E(f) \cap \partial  \cb(J)$ then
 there is an open interval $W$ containing $x$ such that $h|W$ is $C^s$ which is a contradiction.
   \cqd

\appendix

\section{Properties of multimodal maps}

A periodic point $p$ with period $n\in\NN$ is called a {\em periodic attractor} if   there is an open set $V$ with $p \in \partial V$   such that   $\lim_{j\to+\infty} f^{jn}(V)=p$.
A periodic point $p$ with period $n\in\NN$ is called {\em neutral} if     $|Df^{n}(p)|=1$.
A periodic point $p$  with period $n\in\NN$  is  \emph{weak repelling} if $p$ is neutral and
 there is an open set $V$ with $p \in  V$   such that  $f^{n}|V$ is a diffeomorphism  and  $\lim_{j\to+\infty} (f^{n}|V)^{-n}(x)=p$ for all $x \in V$.
A periodic point $p$  with period $n\in\NN$ is a \emph{repellor} if     $|Df^{n}(p)|>1$.
 Let us denote by $\PR(f)$ the set of all  repellor periodic points of $f$.

\begin{T} [Mañé]
\label{zoom1}
Let $f\colon I\to I$ be a  $C^2$ map
without weak repelling periodic points and such that $\# \Fix(f^{n})<\infty$ for all $n\in\NN$. For every $\gamma>0$, there are $C>0$ and $\lambda>1$ with the following properties:
\begin{enumerate}
\item if $J \subset I$ is an   interval  whose $\omega(J)$ does not intersect any  periodic attractor, and
\item  if $n \in \mathbb{N}$ is such that, for every $0 \le j \le n$, $f^j(J) \cap B_{\gamma}(C_{f})=\emptyset$,
\end{enumerate}
then
$$ \lrd_{f^n}(x,y,z) \le C |f^n(z)-f^n(x)| ~~~\mbox{and}~~~  |f^n(J)| \ge C\lambda^{n}|J|  ,$$
for every $x,y,z \in J$ with $x <y <z$.
\end{T}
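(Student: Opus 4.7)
The plan is to derive both estimates from a single hyperbolicity statement that is the heart of Mañé's theorem. The key claim I would establish first is: there exist $C_0>0$ and $\lambda_0>1$, depending only on $\gamma$, such that for every $w\in I$ and every $n\ge 1$ satisfying (a) $f^j(w)\in I\setminus B_\gamma(C_f)$ for $0\le j\le n-1$ and (b) $\omega(w)$ contains no periodic attractor, one has $|(f^n)'(w)|\ge C_0\lambda_0^n$. Granted this, the length estimate in the theorem is immediate: because no $f^j(J)$ meets $C_f$, the map $f^n$ is monotone on $J$, and the pointwise bound integrates via the mean value theorem to give $|f^n(J)|\ge C_0\lambda_0^n|J|$.

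The main obstacle is the proof of this uniform expansion, which is Mañé's classical theorem for $C^2$ interval maps. I would argue by contradiction. If the bound failed, one could extract sequences $w_k\in I$ and $n_k\to\infty$ with $|(f^{n_k})'(w_k)|^{1/n_k}\to 1$, with all iterates $f^j(w_k)$ in the compact set $K_\gamma:=I\setminus B_\gamma(C_f)$ and with $\omega(w_k)$ avoiding periodic attractors. The normalized empirical measures along these orbit segments accumulate on an $f$-invariant Borel probability $\mu$ supported on $K_\gamma$ and satisfying $\int\log|f'|\,d\mu\le 0$. An ergodic component $\nu$ of $\mu$ then has non-positive Lyapunov exponent. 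A Poincaré-recurrence and Pliss-type closing argument, using that $f$ is $C^2$ with $|f'|$ bounded away from $0$ on $K_\gamma$, produces a periodic point $p$ with multiplier $|(f^{n(p)})'(p)|\le 1$. The hypotheses rule out both neutral periodic points (forbidden because the absence of weak repellors together with $\#\Fix(f^n)<\infty$ excludes neutral periodic orbits) and periodic attractors accumulated by $\omega(w_k)$, yielding the required contradiction.

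With the expansion established, the distortion estimate follows by a pull-back argument. Since $f$ is $C^2$ and $|f'|$ is bounded below on $K_\gamma$, the function $\varphi:=\log|f'|$ is Lipschitz on $K_\gamma$ with some constant $L$. Given $x<y<z$ in $J$, the mean value theorem applied to the monotone diffeomorphism $f^n|_{[x,z]}$ produces $\eta\in(x,y)$ and $\xi\in(y,z)$ such that
\[
\lrd_{f^n}(x,y,z)=\bigl|\log|(f^n)'(\xi)|-\log|(f^n)'(\eta)|\bigr|\le L\sum_{j=0}^{n-1}\bigl|f^j(\xi)-f^j(\eta)\bigr|.
\]
Applying the uniform expansion to each point of $[x,z]$ gives $|f^j([x,z])|\le C_0^{-1}\lambda_0^{-(n-j)}|f^n(z)-f^n(x)|$, hence $|f^j(\xi)-f^j(\eta)|\le C_0^{-1}\lambda_0^{-(n-j)}|f^n(z)-f^n(x)|$, and summing the resulting geometric series yields $\lrd_{f^n}(x,y,z)\le C|f^n(z)-f^n(x)|$ with $C=L\,C_0^{-1}/(1-\lambda_0^{-1})$.

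The genuinely hard step is the Mañé expansion itself, which rests on a non-trivial recurrence/closing argument on invariant measures; once it is in place, both conclusions of the theorem follow by elementary calculus and a geometric-series pull-back using the $C^2$ smoothness of $f$.
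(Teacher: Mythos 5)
Your second half — deducing both estimates from a pointwise uniform expansion statement via the mean value theorem, the Lipschitz continuity of $\log|f'|$ on $I\setminus B_{\gamma}(C_f)$ (where $|f'|$ is bounded below and $f'$ is Lipschitz since $f$ is $C^2$), and a geometric series along the pull-back — is correct and is exactly what the paper does: its proof consists of citing Ma\~n\'e's theorem \cite{Man85} for the expansion and invoking precisely this ``$\log$ of a $C^2$ map is locally Lipschitz outside the critical set'' observation. So, at the level the paper argues, your proposal follows the same route, only spelled out in more detail.

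The gap is in your attempt to reprove Ma\~n\'e's theorem itself, which the paper treats as a black box. The step ``a Poincar\'e-recurrence and Pliss-type closing argument produces a periodic point $p$ with $|(f^{n(p)})'(p)|\le 1$'' is asserted, not proved, and it is the entire content of the theorem. Pliss-type selection requires a definite nonzero average of $\log|f'|$; when the limit measure has exponent exactly zero (the genuinely hard case Ma\~n\'e's argument is designed to handle) it gives nothing, and closing a zero-exponent measure for a $C^2$ interval map is not a routine matter. Two further points in your contradiction scheme do not hold as stated. First, ``absence of weak repellors together with $\#\Fix(f^n)<\infty$ excludes neutral periodic orbits'' is false: a neutral point attracting on one side and repelling on the other is not weakly repelling; it is a periodic attractor in the paper's sense and is only excluded from $\omega(J)$, not from existing. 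Second, the periodic point you would obtain lies in the support of the limit of empirical measures along \emph{finite} orbit segments of the $w_k$, and that support need not be contained in $\bigcup_k\omega(w_k)$: an orbit segment can spend most of its time near such a one-sided parabolic point (on its repelling side) and then leave, so the limit measure may charge a non-repelling orbit that no $\omega(w_k)$ meets, and no contradiction with your hypotheses (a)--(b) results. If you intend Ma\~n\'e's theorem as a citation, as the paper does, your write-up is fine; as a self-contained proof, the closing step must be replaced by an actual argument (e.g., the interval-exchange/distortion analysis in Ma\~n\'e's paper or in de Melo--van Strien), and the treatment of neutral one-sided attractors must be made explicit.
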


\dem
It follows from Mañe's Theorem \cite{Man85} and the fact that the logarithm of a $C^{2}$ map is locally Lipschitz outside the critical set.
\cqd

\begin{Lemma} [Forward capture of a critical point]
\label{zoom3} Let $f\colon I\to I$ be a  $C^2$ map and $\# \Fix(f^{n})<\infty$ for every $n \in\NN$.
For each interval $J\subset I$,  whose $\omega(J)$ does not intersect a periodic attractor, there is $n\in\NN$ such that the interior of  $f^n(J)$ contains a critical point.
\end{Lemma}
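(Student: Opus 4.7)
The plan is to argue by contradiction: assume that the interior of $f^n(J)$ contains no critical point of $f$ for every $n\in\NN$. Since each $f|_{\inter f^n(J)}$ is then a local diffeomorphism, each $f^n|_J$ is a monotone $C^2$ map onto its image, and in particular $|f^n(J)|\le|I|$ for every $n$.

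First I would apply Theorem~\ref{zoom1} to force the forward orbit of $J$ to approach the critical set. Fix $\gamma>0$. If there were $n_k\to\infty$ such that $f^j(J)\cap B_\gamma(C_f)=\emptyset$ for every $0\le j\le n_k$, Theorem~\ref{zoom1} (applied using the hypothesis that $\omega(J)$ does not meet a periodic attractor) would give $|f^{n_k}(J)|\ge C\lambda^{n_k}|J|\to\infty$, contradicting boundedness of $I$. Hence the orbit $\{f^n(J)\}$ visits $B_\gamma(C_f)$ for every $\gamma>0$, and a diagonal extraction produces $c\in C_f\cap\omega(J)$ together with $n_k\to\infty$ such that $f^{n_k}(J)\to\{c\}$ in the Hausdorff sense.

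To conclude, I would split into two cases according to how $f^{n_k}(J)$ approaches $c$. If $c\in\partial f^{n_k}(J)$ for infinitely many $k$, then by pigeonhole one of the two endpoints of $J$ is sent to $c$ by infinitely many iterates, and setting $m_k=n_{k+1}-n_k$ we obtain $f^{m_k}(c)=c$, forcing $c$ to be a periodic critical point and hence a periodic attractor lying in $\omega(J)$, contradicting the hypothesis. Otherwise $f^{n_k}(J)$ lies strictly on one side of $c$ with $|f^{n_k}(J)|\to 0$; then $f^{n_{k+1}-n_k}$ carries a small interval sitting just to one side of $c$ onto another such small interval, and rescaling via the bounded distortion supplied by Theorem~\ref{zoom1} along orbit segments staying outside a fixed $\gamma$-neighborhood of $C_f$ produces a limiting diffeomorphism from a non-degenerate interval around $c$ onto one side of $c$. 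This forces some $f^m(J)$ to contain $c$ in its interior, giving the required contradiction.

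The main obstacle is the rescaling argument in the second case: one must partition each orbit segment $f^{n_k}(J),\dots,f^{n_{k+1}}(J)$ at the moments it enters $B_\gamma(C_f)$ and stitch together the distortion estimates of Theorem~\ref{zoom1} on each sub-segment, using the non-attractor hypothesis on $\omega(J)$ to guarantee that the exponential factors accumulate and no stretch collapses. The remaining steps are a standard compactness and diagonal extraction.
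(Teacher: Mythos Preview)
Your approach has genuine gaps, and the route you chose is substantially harder than necessary.

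First, a formal point: Theorem~\ref{zoom1} requires the absence of weak repelling periodic points, a hypothesis not present in Lemma~\ref{zoom3}. So you cannot invoke it as stated. This could be patched by strengthening the hypotheses, but there are deeper problems.

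The claim that a diagonal extraction yields $f^{n_k}(J)\to\{c\}$ in the Hausdorff sense is unjustified. From Theorem~\ref{zoom1} you only conclude that for every $\gamma>0$ some iterate $f^{n}(J)$ meets $B_\gamma(\cc_f)$; this says $\dist(f^{n_k}(J),c)\to 0$ for some $c\in\cc_f$, not that $|f^{n_k}(J)|\to 0$. Since by assumption $c\notin\inter f^{n_k}(J)$, the interval $f^{n_k}(J)$ could perfectly well have one endpoint tending to $c$ while its length stays bounded below. You cannot appeal to Lemma~\ref{zoom5} here, as that lemma is proved \emph{using} Lemma~\ref{zoom3}. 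Without the Hausdorff convergence your case split does not get off the ground.

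Even granting the case split, your ``second case'' is not a proof. You propose to partition each orbit segment between visits to $B_\gamma(\cc_f)$, use the distortion bound of Theorem~\ref{zoom1} on each piece, and somehow rescale to obtain a limiting diffeomorphism on a \emph{non-degenerate} two-sided neighborhood of $c$. But the distortion control of Theorem~\ref{zoom1} is precisely unavailable on the pieces near $c$, and nothing in your outline explains why the rescaled return maps should converge, why the limit should be defined on both sides of $c$, or why its existence forces some $f^m(J)$ to cross $c$. As written this is a hope, not an argument.

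The paper's proof avoids all of this. It does not use Ma\~n\'e's theorem at all: by the non-existence of wandering intervals for $C^2$ maps (together with the hypothesis that $\omega(J)$ misses every periodic attractor), two iterates $f^k(J)$ and $f^l(J)$ must overlap. The closure $D$ of $\bigcup_{n\ge 0} f^{n(k-l)}(f^l(J))$ is then an interval on which $g=f^{2(k-l)}$ is monotone, so every $g$-orbit in $D$ converges to a fixed point; finiteness of $\Fix(g)$ forces one of these to be attracting, producing a periodic attractor in $\omega_f(J)$ and the desired contradiction. This is a two-line application of elementary monotone interval dynamics once the overlap is obtained, and it is what you should aim for.
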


\dem Let us
suppose, by contradiction, that $f^n\vert \inter(J)$ is a
diffeomorphism onto its image for every $n\in\NN$.
Since $\omega(J)$ does not intersect a periodic attractor and a $C^{2}$ map does not admit a wandering interval (see \cite{BL89,WS}), there is $k >l>0$ such that
$f^k(J)\cap f^l(J)\neq \emptyset$. The closure $D$ of the set
$\bigcup_{n \ge 0} f^{n(k-l)}(f^l(J))$ is a forward invariant interval for
$f^{(k-l)}$.
Thus, $g=f^{2(k-\ell)}|_{D}$ is monotone map of $D$ into itself.
 Thus,  $\omega_{g}(x)\subset \Fix(g)$ for every $x\in D$.
Since $\# \Fix(g)<\infty$, we get that there is an attracting fixed point $p\in D$ for $g$. Hence, ${\mathcal{O}_{f}}^{+}(p)$ is an attracting periodic orbit for $f$ intersecting $\omega_{f}(J)$, contradicting our hypothesis.
\cqd

\begin{Lemma} [Domain shrinking for iterated local diffeomorphisms]
\label{zoom5} Let $f\colon I\to I$ be a  $C^2$
 map and $\# \Fix(f^{n})<\infty$ for every $n \in\NN$. If $J_1,J_2,... \in I$ is a
sequence of open intervals such that
\begin{enumerate}
\item $\bigcup_{n \ge 1}\omega(J_n)$ does not intersect a periodic attractor and
\item $f^{m_{n}}\vert J_n$ are
diffeomorphisms, with $m_{n}$ tending to $\infty$,
\end{enumerate}
then $|J_n|\to 0$ when $n$ tends to infinity.
\end{Lemma}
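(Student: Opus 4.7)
The plan is to argue by contradiction. Suppose $|J_n|$ does not tend to zero. Then there exist $\varepsilon>0$ and a subsequence (which we relabel) with $|J_n|\ge\varepsilon$ for every $n\ge1$. By compactness of $I$, extracting a further subsequence we can assume that the left and right endpoints of $J_n$ converge to points $a<b$ with $b-a\ge\varepsilon$. Fix any open interval $J$ with $\overline{J}\subset(a,b)$; then $J\subset J_n$ for every sufficiently large $n$.

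Next I would transfer the hypotheses of the preceding lemmas to this fixed $J$. From $J\subset J_n$ it follows that $f^j(J)\subset f^j(J_n)$ for all $j\ge0$, hence $\omega(J)\subset\omega(J_n)$, so $\omega(J)$ does not intersect any periodic attractor. Likewise, $f^{m_n}|_{J}$ is a diffeomorphism for every large $n$, as a restriction of the diffeomorphism $f^{m_n}|_{J_n}$, and $m_n\to\infty$ by hypothesis.

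Now I would apply Lemma \ref{zoom3} to $J$: it yields an integer $k\in\NN$ such that $\inter(f^k(J))$ contains a critical point $c$ of $f$. Pick $x\in J$ with $f^k(x)=c$; since $f'(c)=0$, the chain rule gives
$$
(f^{m_n})'(x)=\prod_{j=0}^{m_n-1}f'(f^j(x))=0
$$
whenever $m_n>k$. This holds for all large $n$ because $m_n\to\infty$, contradicting the assumption that $f^{m_n}|_{J}$ is a diffeomorphism. Hence $|J_n|\to0$.

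The main task is essentially bookkeeping: one must confirm that the Hausdorff-limit argument really exhibits an open $J$ lying inside $J_n$ for all large $n$, so that both the $\omega$-limit condition and the diffeomorphism hypothesis descend from $J_n$ to $J$. Once that is in place, Lemma \ref{zoom3} produces a critical point in some forward image of $J$ and the diffeomorphism hypothesis collapses via the chain rule, so no further machinery beyond the two preceding lemmas of the appendix is required.
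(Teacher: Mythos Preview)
Your proof is correct and follows essentially the same route as the paper's: argue by contradiction, use compactness of $I$ to extract a fixed open interval $L$ contained in infinitely many $J_n$, and then invoke Lemma~\ref{zoom3} to produce a critical point in some forward image of $L$, contradicting the diffeomorphism hypothesis. The paper's version is terser (it simply notes that $f^\ell|_L$ is a diffeomorphism for every $\ell\ge1$ and cites Lemma~\ref{zoom3} for the contradiction), while you spell out the endpoint-convergence argument, the inclusion $\omega(J)\subset\omega(J_n)$, and the chain-rule computation explicitly; these are exactly the details the paper leaves to the reader.
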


\dem Let us suppose, by contradiction, that there is $\delta > 0$
such that $|J_n| > \delta$, for every $n \ge 1$. Since $I$ is
compact, there is an interval $L$ and an infinite  subsequence
$J_{m_1}, J_{m_2}, \ldots$ of intervals   such that   $L \subset J_{m_n}$ for every $n \ge 1$.
Hence, $f^\ell|L$ is a
diffeomorphism, for every $\ell \ge 1$, which, by Lemma
\ref{zoom3}, is a contradiction. \cqd

Following M. Martens \cite{Ma}, a union  $J=\bigcup_{i} J_i$ of  pairwise disjoint open intervals $J_1,J_2, \ldots$ is a
\emph{nice set}, if the forward orbit of the boundaries $\bigcup_{i=1}^l
\partial J_i$ of $J$ do not intersect $J$.

\begin{Lemma} [Nice infinitesimal neighborhoods of critical points]
\label{NICE}
Let $f:I\to I$ be a multimodal map without periodic attractors. For every small $\ve>0$, there is a nice set $J=\bigcup_{c\in\cc_{f}}(p_{c},q_{c})$  $\cn$ such that $c\in(p_{c},q_{c})\subset B_{\ve}(c)$ for all $c\in\cc_{f}$.
\end{Lemma}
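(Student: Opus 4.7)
The plan is to construct the nice set $J = \bigcup_{c\in\cc_f} (p_c, q_c)$ by choosing each pair of boundary points $p_c, q_c \in B_\ve(c)$ as preimages of a suitable finite forward-invariant set $E \subset I$ that is disjoint from $B_\ve(\cc_f)$. This way, the tail of every forward orbit of a boundary point lies in $E$ and hence outside $J \subset B_\ve(\cc_f)$, reducing the problem to controlling only finitely many intermediate iterates.

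The first step is to construct the reference set $E$. Since $f(\partial I) \subset \partial I$ and $\#\Fix(f^n) < \infty$ for every $n$, the forward orbit of $\partial I$ is a finite forward-invariant set; for small $\ve$ this seed is already disjoint from $B_\ve(\cc_f)$. I would then adjoin finitely many repelling periodic orbits---available in each dynamically relevant region because of the absence of periodic attractors together with Theorem \ref{zoom1}---so that the full backward orbit $\bigcup_{n\ge 0} f^{-n}(E)$ accumulates at every critical point $c$ from both sides within $B_\ve(c)$. Density of the backward orbit near $c$ relies on the absence of wandering intervals for $C^2$ multimodal maps: iterating $(c-\ve,c)$ and $(c,c+\ve)$ forward produces, in finite time, intervals that meet $E$, and pulling back along an appropriate branch through $c$ yields preimages of $E$ arbitrarily close to $c$ on each side.

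Next, for each $c\in\cc_f$ I would pick $p_c \in (c-\ve, c)$ and $q_c \in (c, c+\ve)$ from $\bigcup_n f^{-n}(E)$ with the additional requirement that every intermediate iterate $f^j(p_c)$ and $f^j(q_c)$, for $j$ less than the first landing time in $E$, also avoids the final set $\bigcup_{c'\in\cc_f} (p_{c'}, q_{c'})$. To make these requirements compatible across different critical points I would adjoin the relevant intermediate iterates themselves to $E$: each enlargement adds only finitely many points and preserves the forward invariance of $E$, so after finitely many such enlargements the process stabilizes and produces a consistent simultaneous choice of all $p_c$ and $q_c$.

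The main obstacle is precisely this compatibility step. An intermediate iterate of some $p_c$ or $q_c$ can descend back near another critical point $c'$ and land inside the candidate interval $(p_{c'}, q_{c'})$, and adjusting one endpoint may force readjustments elsewhere. Termination of the refinement procedure rests on the finiteness of $\cc_f$ together with the uniform expansion estimate of Theorem \ref{zoom1}, which prevents forward orbits from shadowing the critical set indefinitely between successive returns to $E$ and thus forces the absorption process to close up in finitely many steps.
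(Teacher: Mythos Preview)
Your approach is quite different from the paper's. The paper argues indirectly: it shows the collection $\cn$ of nice sets containing $\cc_f$ (one component per critical point) is nonempty---using the components of $I\setminus\Lambda$ for $\Lambda=\{x:f^j(x)\notin B_\ve(\cc_f)\ \forall j\}$---and closed under finite intersection. It then proves by contradiction that for each $c$ some $V\in\cn$ has its $c$-component inside $B_\ve(c)$: otherwise $H(c)=\inter\bigcap_{J\in\cn}J_c$ would contain $B_\ve(c)$, and one manufactures a point $q\in H(c)$ whose forward orbit stays outside $\cn$-sets (a periodic point if $c$ is non-wandering, or a preimage of a boundary point of some $V_{\tilde c}$ if $c$ is wandering, via Lemma~\ref{zoom3}); the components of the complement of this finite orbit give a strictly smaller member of $\cn$, a contradiction. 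Intersecting the finitely many $V(c)$ finishes. This sidesteps your compatibility problem entirely.

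Your direct construction can be made to work, but the termination argument you give is wrong. Theorem~\ref{zoom1} is a red herring: Ma\~n\'e's estimate applies only to orbit segments that stay \emph{away} from $B_\gamma(\cc_f)$, whereas the intermediate iterates of your $p_c,q_c$ may revisit critical neighborhoods many times. The real reason the process closes up is purely combinatorial. If you choose the initial $p_c,q_c$ as \emph{regular} preimages of $E$ (so that no intermediate iterate is a critical point), then the finite set $O_0=\{f^j(x):x\in\partial J_0,\ j\ge1\}\cap J_0$ is forward-closed inside $J_0$ and misses $\cc_f$; replacing each $(p_c,q_c)$ by the component of $(p_c,q_c)\setminus O_0$ containing $c$ already yields a nice set, in a single step with no iteration. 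Your phrase ``adjoin the relevant intermediate iterates to $E$'' is also problematic as written, since those iterates can lie in $B_\ve(\cc_f)$ and would destroy the disjointness of $E$ from the critical neighborhoods that you rely on. Finally, the existence of a finite forward-invariant $E$ whose regular backward orbit accumulates on every $c$ from both sides is plausible but not an immediate consequence of ``no wandering intervals''; it needs an actual argument (e.g.\ producing a periodic orbit inside the eventual cycle of intervals generated by each one-sided neighborhood of $c$).
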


We note that, if $\{J_{k}\}$ is the set of connected components of a nice set $J$ then $$J'=\bigcup_{J_{k}\cap\cc_{f}\ne\emptyset}J_{k}$$ is also a nice set.
Let $\cn$ be the collection of all nice set $J=\bigcup_{k}(p_{k},q_{k})$ such that $\cc_{f}\subset J$ and $(p_{k},q_{k})\cap\cc_{f}\ne\emptyset$ for all $k$. We note that, if $U,V\in\cn$ then $U\cap V\in\cn$.

\dem
First, let us show that there is a nice set $J$ such that $\cc_{f}\subset J$. Consider the compact positive invariant set
$$\Lambda=\{x\in I\,;\,f^{j}(x)\notin B_{\ve}(\cc_{f})\ , ~~~ \forall\,j\ge 0 \} .$$
For every $c\in\cc_{f}$, there is a connected component $J_{c,\Lambda} \supset B_{\ve}(c)$  of $I\setminus \Lambda$. Let $J=\bigcup_{c\in\cc_{f}}J_{c,\Lambda}$. Since $\partial J=\bigcup_{c\in\cc_{f}}\partial J_{c, \Lambda}\subset\Lambda$, we get $f^{j}(\partial J)\subset \Lambda$  for every $j\ge0$. Hence, $f^{j}(\partial J)\cap J=\emptyset$ for every $j\ge0$, i.e. $J$ is a nice set and contains $\cc_{f}$.  Thus, $\cn$ is not an empty collection.

If $c\in\cc_{f}$, either $V\supset B_{\ve}(c)$, for all $V\in\cn$, or there exists $V(c)=\bigcup_{\tilde{c}\in\cc_{f}}V_{\tilde{c}}(c)\in\cn$ such that $V_{c}(c)\subset B_{\ve}(c)$ and $\tilde{c}\in V_{\tilde{c}}(c)$ for all $\tilde{c}\in\cc_{f}$.

Let $\cc_{f}^{\ve}$ be the set of $c\in\cc_{f}$ such that $V\supset B_{\ve}(c)$ for all $V\in\cn$. For every $c\in\cc_{f}^{\ve}$, let $H(c)= \inter \bigcap_{J\in\cn}J_{c}$, where $J_{c}$ is the connected component of $J$ containing $c$.  Hence,  $H(c)$ is a  nice interval and
\begin{equation}\label{eqkjkh76jhjh}
H(c)\subset W\mbox{ for all }W\in\cn.
\end{equation}

\begin{Claim}If $c_0\in\cc_{f}$ is non-wandering then $c_0\notin\cc_{f}^{\ve}$  for all $\ve>0$.
\end{Claim}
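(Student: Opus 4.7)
We argue by contradiction. Suppose $c_0 \in \cc_f^\ve$ for some $\ve>0$; by definition this means every $V\in\cn$ contains $B_\ve(c_0)$, and therefore the nice interval $H(c_0)$ also contains $B_\ve(c_0)$. The plan is to construct an explicit nice set $V\in\cn$ whose connected component containing $c_0$ is contained in $B_{\ve/2}(c_0)$. Such a $V$ does not contain $B_\ve(c_0)$, contradicting $c_0 \in \cc_f^\ve$.

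The starting material is a repelling periodic point $p$ of $f$ whose non-critical backward orbit $\co^-_{nc}(p)$ accumulates on $c_0$ from both sides. Such a $p$ exists by the standard theory of $C^2$ multimodal maps without periodic attractors and without neutral periodic points: since $c_0$ is non-wandering, $c_0$ lies in the non-wandering set, which off the orbits of the critical set is expanding (Theorem~\ref{zoom1} and Ma\~n\'e), so periodic repellors are rich enough for their non-critical pre-orbits to be dense near $c_0$. Shrinking $\ve$ if necessary, we may assume $\co(p) \cap \overline{B_\ve(c_0)} = \emptyset$.

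Next, choose $q_-,q_+ \in \co^-_{nc}(p)\cap(c_0-\ve/2,c_0+\ve/2)$ with $q_-<c_0<q_+$, so that $J_{c_0}:=(q_-,q_+)\subset B_{\ve/2}(c_0)$. Each $q_\pm$ satisfies $f^{N_\pm}(q_\pm)=p$ for some $N_\pm\ge 1$, so the forward orbit $\co^+(q_\pm)$ consists of the finite transient $\{q_\pm,f(q_\pm),\dots,f^{N_\pm-1}(q_\pm)\}$ followed by cycling through $\co(p)$. We must arrange that $\co^+(q_-)\cup\co^+(q_+)$ is disjoint from $J_{c_0}$. The periodic part $\co(p)$ is already disjoint from $\overline{B_\ve(c_0)}$. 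For the transient parts we use Lemma~\ref{zoom5}: only finitely many pre-images of $p$ of bounded depth fall into any fixed compact interval, so by choosing $q_\pm$ sufficiently deep in $\co^-_{nc}(p)$ (still keeping them in $(c_0-\ve/2,c_0+\ve/2)$, which is possible because $\co^-_{nc}(p)$ accumulates on $c_0$), we may ensure that the intervening iterates $f^j(q_\pm)$, $0<j<N_\pm$, all leave $J_{c_0}$. Thus $J_{c_0}$ is a nice interval around $c_0$ contained in $B_{\ve/2}(c_0)$.

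Finally, for every critical point $c\in\cc_f\setminus\{c_0\}$ we use Lemma~\ref{NICE} (applied for a sufficiently small $\ve'$) to pick a nice interval $J_c\ni c$ small enough to be disjoint from the finite set $\co^+(q_-)\cup\co^+(q_+)$ and so that no boundary point of $J_c$ has a forward iterate in $J_{c_0}$ either (any such forward iterate would have to pass through the already-bounded orbit $\co^+(q_\pm)$ or through $\co(p)$, which is prevented by shrinking $J_c$). Set $V=J_{c_0}\cup\bigcup_{c\ne c_0}J_c$. Each component of $V$ contains exactly one critical point, $\cc_f\subset V$, and the niceness condition holds: boundary points of $J_{c_0}$ are $q_\pm$, whose forward orbits avoid $V$ by the construction above, and boundary points of the other $J_c$'s avoid $V$ by the last sentence. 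Hence $V\in\cn$, but the $c_0$-component $J_{c_0}$ does not contain $B_\ve(c_0)$, giving the desired contradiction.

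The main obstacle is the middle step: producing $q_-,q_+$ in the non-critical backward orbit of some repellor $p$ with the property that the finite transient portions of their forward orbits do not re-enter the small interval $(q_-,q_+)$. This uses both the accumulation of $\co^-_{nc}(p)$ on $c_0$ (to choose $q_\pm$ arbitrarily close to $c_0$) and the uniform expansion of iterates that stay away from the critical set (Theorem~\ref{zoom1}) to ensure that deep pre-images of $p$ near $c_0$ have long transient orbits that escape to the region where $\co(p)$ lives, far from $c_0$.
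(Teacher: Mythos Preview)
Your strategy---manufacture an explicit $V\in\cn$ whose $c_0$-component lies in $B_{\ve/2}(c_0)$---is sound, but the execution has gaps. The central one is the middle step: you assert that by taking $q_\pm$ ``sufficiently deep'' in $\co^-_{nc}(p)$ the transient iterates $f^j(q_\pm)$ ($0<j<N_\pm$) avoid $(q_-,q_+)$, citing Lemma~\ref{zoom5}. But that lemma concerns the shrinking of \emph{intervals} admitting long diffeomorphic iterates; it says nothing about where the forward orbit of a single point lands. A deeper pre-image has a \emph{longer} transient and hence \emph{more} opportunities to revisit a neighbourhood of $c_0$, not fewer---there is no monotonicity to exploit. (One correct variant along your lines: pick any pre-periodic $q_-^0<c_0<q_+^0$ in $B_{\ve/2}(c_0)$, set $A=\co^+(q_-^0)\cup\co^+(q_+^0)$, a finite forward-invariant set not containing $c_0$, and then take $q_-=\max\bigl(A\cap(-\infty,c_0)\bigr)$ and $q_+=\min\bigl(A\cap(c_0,\infty)\bigr)$; then $(q_-,q_+)\cap A=\emptyset$ is automatic.) Two further problems: the accumulation of $\co^-_{nc}(p)$ on $c_0$ from both sides is asserted as ``standard'' but is not a consequence of Theorem~\ref{zoom1} as stated; and your last paragraph invokes Lemma~\ref{NICE} to obtain small nice intervals around the other critical points, which is circular since the present claim is part of the proof of Lemma~\ref{NICE}.

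The paper avoids all of this by working intrinsically with $H(c_0)$. Since $c_0$ is non-wandering, there is a least $n\ge1$ with $f^n(H(c_0))\cap H(c_0)\neq\emptyset$. If $f^n(H(c_0))\not\subset\overline{H(c_0)}$, choose $q\in H(c_0)$ with $f^n(q)\in\partial H(c_0)$ and $f^n$ a local diffeomorphism at $q$; letting $U_c$ be the component of $\inter(I)\setminus\{q,\dots,f^{n-1}(q)\}$ containing $c$ produces $U=\bigcup_{c}U_c\in\cn$ with $q\in H(c_0)\setminus U_{c_0}$, contradicting \eqref{eqkjkh76jhjh}. If instead $f^n(H(c_0))\subset\overline{H(c_0)}$, then $f^n|_{\overline{H(c_0)}}$ is a multimodal map without periodic attractors, hence has a periodic point $q\in H(c_0)$; removing its $f$-orbit plays the same role. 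This needs only a single point with eventually periodic forward orbit, requires neither density of repellor pre-orbits nor any separate handling of the other critical points, and uses the niceness of $H(c_0)$ to control the tail of the orbit of $q$.
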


\dem[Proof of the claim]Let $\ve>0$ and
$c_{0}\in\cc_{f}$ be a non-wandering point.
Hence, take the smallest $n\ge1$ such that $f^{n}(H(c_{0})) \cap {H(c_{0})} \ne \emptyset$.
Either (i) $f^{n}(H(c_{0}))\not\subset \overline{H(c_{0})}$ or (ii) $f^{n}(H(c_{0}))\subset \overline{H(c_{0})}$.

Case (i). Take $q \in  H(c_{0})$ such that $f^{n}(q) \in f^{n}(H(c_{0})) \cap \overline{H(c_{0})}$ and there is a small interval $V_q$ containing $q$ such that $f^{n}|V_q$ is a diffeomorphism.
For every $c\in\cc_{f}$, let $U_{c}$ be the connected component of $\inter(I)\setminus\{q,\cdots,f^{n-1}(q)\}$ containing $c$.
We get that $U=\bigcup_{c\in\cc_{f}}U_{c}$ belongs to $\cn$ and $H(c_{0})\not\subset U_{c_0}$, because $q\in H(c_{0})$ but $q\notin U_{c_0}$, contradicting (\ref{eqkjkh76jhjh}).

Case (ii).  Since $f^{n}(H(c_{0}))\subset \overline{H(c_{0})}$,   $g=f^{n}|_{\overline{H(c_{0})}}$ is a multimodal map and $f^{n}(\partial H(c_{0}))\subset\partial H(c_{0})$.
Since there is no periodic attractor for $g$, there is a periodic point $q\in H(c_{0})$ for the map $g$.
For every $c\in\cc_{f}$, let $U_{c}$ be the connected component of $\inter(I)\setminus\{q,\cdots,f^{m-1}(q)\}$ containing $c$, where $m$ is the period of $q$ with respect to $f$.
We get that $U=\bigcup_{c\in\cc_{f}}U_{c}$ belongs to $\cn$ and $H(c_{0})\not\subset U_{c_0}$, because $q\in H(c_{0})$ but $q\notin U_{c_0}$, contradicting (\ref{eqkjkh76jhjh}).
\cqd

Now, we consider the case of the wandering critical points. Let $\ve>0$ and $c_{0}$ be a wandering critical point. From Lemma~\ref{zoom3},  there is   $n\ge1$ and a non-wandering $\tilde{c}\in\cc_{f}$ such that $\tilde{c}\in f^{n}(H(c_{0}))$. By the claim above, $\tilde{c}\notin\cc_{f}^{\ve}$. Thus, there is  $V=\bigcup_{c\in\cc_{f}}V_{c}\in\cn$ such that $\partial V_{\tilde{c}}\cap f^{n}(H(c_{0}))\ne\emptyset$. Let $q\in H(c_{0})$ be such that $f^{n}(q)\in\partial V_{\tilde{c}}$ and there is a small interval $V_q$ containing $q$ such that $f^{n}|V_q$ is a diffeomorphism.
For every $c\in\cc_{f}$ consider $U_{c}$ the connected component of $V_{c}\setminus\{q,\cdots,f^{t}(q)\}$ containing $c$. Thus  $U=\bigcup_{c\in\cc_{f}}U_{c}\in\cn$ and $H(c_{0})\not\subset U_{c_{0}}$, contradicting (\ref{eqkjkh76jhjh}).
\cqd

\begin{Lemma} [Fatness of  repellors]
\label{Expmap1}
Let $f$  be $C^r$ a multimodal map with $r \ge 3$ and no periodic attractors  and no neutral points.
\begin{enumerate}
\item
If $f$ is  infinitely renormalizable around a critical point $c$,
then there is a renormalization interval  $J(c)$ such that  ${\mathcal O}^-_{nc} (\PR(f))$ is  dense in $\cb (J(c))$.
\item
If $f$ is no renormalizable inside a renormalizable interval $J$,  then $\alpha_{nc}(\PR(f))$ contains $\overline{\cb (J)}$.
\end{enumerate}
\end{Lemma}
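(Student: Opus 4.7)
The plan is to exploit three classical facts for $C^{1+}$ non-flat multimodal maps $f$ without periodic attractors or neutral points: $f$ admits no wandering intervals (Blokh--Lyubich, Martens--de Melo--van Strien); Mañé's Theorem~\ref{zoom1} gives uniform expansion on the complement of any neighborhood of the critical set; and Lemma~\ref{zoom3} guarantees that every interval whose $\omega$-limit avoids periodic attractors has some iterate whose interior contains a critical point. Together these show that periodic points are dense in every forward-invariant open set of positive length, and since attractors and neutrals are excluded, each such periodic point belongs to $\PR(f)$.

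For part (2), let $F=f^n|_J$ be the no further renormalizable renormalization, and fix once and for all a repeller $p\in\inter J$ not lying on any critical orbit. Given an open $V\subset\cb(J)$, I first iterate forward until $f^m(V)\subset J$. Inside $J$, the combination of no further renormalizability with the absence of wandering intervals forces the forward $F$-orbit of $f^m(V)$ to spread until some $F^\ell(f^m(V))$ contains $p$ in its interior: I would prove this by iterating Lemma~\ref{zoom3} to produce successive captures of critical points, while using Theorem~\ref{zoom1} to exclude the possibility that the orbit of $f^m(V)$ is trapped in a proper open $F$-invariant sub-interval (which would yield a further renormalization). A non-critical preimage of $p$ inside $V$ is then produced by observing that the critical set of $f^{m+\ell n}$ is finite, so only finitely many branches of its local inverse pass through a critical derivative; hence, near $p$, a dense set of points is reached by a non-critical pullback. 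This yields $q\in V$ with $f^{m+\ell n}(q)=p$ and $(f^{m+\ell n})'(q)\ne 0$, so ${\mathcal O}^-_{nc}(\PR(f))$ is dense in $\cb(J)$ and $\overline{\cb(J)}\subseteq\alpha_{nc}(\PR(f))$.

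For part (1), I fix any renormalization interval $J(c)=J_k(c)$ from the infinite cascade $J_1(c)\supset J_2(c)\supset\cdots$, with renormalization $F_k=f^{n(k)}|_{J_k(c)}$; the claim is that any level $k$ suffices. Given an open $V\subset\cb(J_k(c))$, iterate until $f^m(V)\subset J_k(c)$. Because $V$ is an open interval, some $F_k$-iterate of $f^m(V)$ must meet the annular region $J_k(c)\setminus\overline{J_{k+1}(c)}$, since otherwise the open interval would be trapped through the deeper basins $\cb(J_{k+1}(c))\supset\cb(J_{k+2}(c))\supset\cdots$ and eventually collapse onto the Cantor solenoid, contradicting no wandering intervals. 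In this annular region, periodic points of $F_k$ are dense, and each such point, not being attractor or neutral, is a repeller of $f$; the same non-critical pullback argument as in part~(2) then produces the required $q\in V$.

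The main obstacle is the \emph{non-critical} aspect of the conclusion. This is resolved by finiteness of the critical set of any composition $f^N$, which ensures that the regular points form an open and dense subset of any open interval, so a branch of $f^{-N}$ sending the chosen repeller into $V$ through a regular point always exists. The genuinely delicate step is the forward spreading inside a no further renormalizable $J$: making it precise combines the absence of wandering intervals with the non-existence of proper $F$-invariant open sub-intervals, which is exactly where no further renormalizability is used.
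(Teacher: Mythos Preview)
Your approach to part~(2) is essentially the paper's, run in the forward rather than the contrapositive direction. The paper fixes a single repeller $p\in J$, supposes $\alpha_{nc}(p)$ has a gap $K\subset J$, and uses forward invariance of $\alpha_{nc}(p)$ to see that the iterates $F^j(K)$ are again gaps; two such gaps containing the same critical point (produced by Lemma~\ref{zoom3}) must coincide, yielding a proper renormalization interval inside $J$. Your version---show that forward iterates of an arbitrary open $V$ eventually engulf $p$---encodes the same dichotomy, but your justification (``trapped in a proper $F$-invariant sub-interval'') is looser than the paper's gap argument: the union $\bigcup_j F^j(V)$ need not be an interval, and you need the closed forward-invariant set $\alpha_{nc}(p)$ to pin down boundaries and force the coincidence of two iterates. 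Your handling of the non-critical pullback is fine.

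Part~(1) has a genuine gap. The assertion ``in this annular region, periodic points of $F_k$ are dense'' is unjustified and in fact false as stated: an open subinterval of $J_k\setminus\overline{J_{k+1}}$ may lie entirely in a gap of $\cb(J_{k+1})$ and contain no periodic point at all. Your fallback (``collapse onto the Cantor solenoid, contradicting no wandering intervals'') does not rescue this, since it is the \emph{iterates} of $V$, not $V$ itself, that shrink, and shrinking iterates do not by themselves contradict the absence of wandering intervals. You also overclaim that any level $k$ works; the statement only asserts existence of a suitable $J(c)$. The paper's argument is quite different: it takes the periodic points $p_n\in\partial J_n$, which are repellers converging to $c$, sets $S=\bigcup_n\alpha_{nc}(p_n)$, and shows by contradiction that $S$ is dense near $c$. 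The key mechanism is that any would-be gap $K$ has an iterate (via Lemma~\ref{zoom3}) containing $c$ in its interior, and hence containing some $p_n$ for $n$ large---this is where $p_n\to c$ is used. One then chooses $J(c)$ small enough to lie inside $\overline S$. This device of manufacturing repellers accumulating on $c$ is the idea your sketch is missing.
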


\dem
 Let us prove (1). Since $f$ is  infinitely renormalizable around $c$, there is an infinite sequence of intervals $J_1, J_2, \ldots$
such that $J_{n+1}$ is strictly contained in $J_n$ and there is a sequence $m_1,m_2,\ldots$ such that $f^{m_n}|J_n$ is a multimodal map
and $c \in f^{m_n}(J_n)$.
By taking $J_1$ sufficiently small, we assume that for every critical point $c' \in J_1$ with $c' \ne c$,
there is  a sequence $l_1,l_2,\ldots$ such that $m_n l_n < m_{n+1}$  and $c' \in f^{m_n l_n}(J_{n+1})$.
Let $p_n$ be  a periodic point contained in the boundary $\partial J_n$ of $J_n$.
Hence  $p_n$ is a repellor and
the set $S=\cup_{n \ge 1} \alpha_{nc} (p_n)$ contains  $c\in \partial S$.
Let us prove that $S$ is dense in the smallest interval set that contains $S$.
By contradiction, suppose that $S$ is not a dense set. Hence, there is an open interval $K$ such that $K	 \subset J_1 \setminus S$ and
$\partial K \subset S$. By forward invariance  of $S$ under $f^{m_1}$, $f^{m_1k}(K) \subset J_1 \setminus S$ and
$\partial f^{m_1k}(K) \subset S$ for every $k$.
By Lemma \ref {zoom3}, there is $k_1$ such that $f^{m_1k_1}(K)$ contains some critical point $c' \in J_1$.
Hence, there is $n$ large enough and $l_n$ such that $f^{m_n l_n}(J_{n+1}) \subset f^{m_1k_1}(K)$.
Hence, there is $k_2$ such that     $f^{m_{n+1}}(J_{n+1}) \subset f^{m_1k_2}(K)$.
Since $c \in  f^{m_{n+1}}(J_{n+1})$, we get
 $c \in f^{m_1k_2}(K)$. Noting that  $p_n$ converges to $c$, we obtain that $f^{m_1k_2}(K)$ contain some $p_n$, for $n$ large,
which contradicts that $f^{m_1k_2}(K) \subset J_1 \setminus S$.
Hence, $S$ is dense in the smallest interval set that contains $S$. Since $c\in \partial S$ is a turning point,
 $S$ is dense in a small neighborhood of $c$.
 Hence, there is a renormalization interval $J(c)$, small enough, containing $c$ that is contained in the closure of $S$.

Let us prove (2). Since $J$  is a renormalization interval, there is $m$ such that $f^m|J$ is a  multimodal map.
  Let  $p \in J$ be a periodic repellor with period $k$ of the map $f^m|J$.
  Since $\alpha_{nc} (p)$ is a closed set, it is enough to prove that  $\alpha_{nc} (p)$ is dense in $J$.
By contradiction, suppose that $\alpha_{nc} (p)$ is not a dense set.
 Hence, there is an open interval $K$ such that $K	 \subset J \setminus \alpha_{nc} (p)$ and
$\partial K \subset \alpha_{nc} (p)$.
By forward invariance  of $\alpha_{nc} (p)$ under $f^{m}$, $f^{mk}(K) \subset J_1 \setminus \alpha_{nc} (p)$ and
$\partial f^{mk}(K) \subset \alpha_{nc} (p)$ for every $k$.
 By Lemma \ref{zoom3}, there is a sequence $k_1,k_2,\ldots$ such that $K_n=f^{mk_n}(K)$ contains some critical point $c_n \in J$.
 Since, the set of critical points in $J$ is finite, there is a critical point $c \in J$ and $k_{l_1} < k_{l_2}$
 such that $K_{l_1}$ and  $K_{l_2}$ contain the critical point $c \in J$.
 Hence, $K_{l_1} \cap K_{l_2} \ne \emptyset$.
 Since
 $$\partial K_{l_1} \subset \alpha_{nc} (p) ~~~,~~~\partial K_{l_1} \subset \alpha_{nc} (p)~~~,~~~ K_{l_1} \cap \alpha_{nc} (p) = \emptyset
 ~~~{\rm and}~~~
 K_{l_1} \cap \alpha_{nc} (p) = \emptyset ,$$
 we obtain that $K_{l_1} = K_{l_2}$. In particular, $f^{m (k_l-k_{l_1})}|K_{l_1}$ is a multimodal and $K_{l_1}$ is strictly contained in $J$
 which  contradicts that $f$ is no renormalizable inside of the renormalizable interval $J$.
 Hence,  $\alpha_{nc} (p)$  contains the closure of $J$. Hence, by definition of alpha limit, $\alpha_{nc} (p)$ contains
 $\overline{\cb (J)}$.
\cqd

%

\end{document}